\documentclass[a4paper]{article}

\usepackage[utf8]{inputenc}
\usepackage[T1]{fontenc}
\usepackage[francais]{babel}
\usepackage{amsthm}
\usepackage{varioref}
\usepackage{amssymb,amsmath}
\usepackage{lmodern}
\usepackage[a4paper]{geometry}
\usepackage{dsfont}

\title{Dimension topologique, moyenne dimension et théorèmes de plongements}
\author{Fanny Amyot\\
Mémoire de Master 2,
préparé sous la direction de Yonatan Gutman, \\
co-dirigé par David Burguet
}

\parskip=8pt

\sloppy

\begin{document}
\newtheorem{Def}{Définition}
\labelformat{Def}{la définition~#1}
\newtheorem{Lemme}{Lemme}
\labelformat{Lemme}{le lemme~#1}
\newtheorem{propo}{Proposition}
\labelformat{propo}{la proposition~#1}
\newtheorem{thm}{Théorème}
\labelformat{thm}{le théorème~#1}
\newtheorem{coro}{Corollaire}
\labelformat{coro}{le corollaire~#1}
\theoremstyle{remark}
\newtheorem{Rem}{Remarque}
\newtheorem{Ex}{Exemple}
\newtheorem{Notation}{Notation}

\maketitle

\begin{abstract}
La rédaction de ce mémoire est basée principalement sur la lecture des deux articles \cite{LW} et \cite{YG}. Il s'agissait d'appréhender la moyenne dimension topologique et de comprendre l'intérêt de cette notion, notamment dans l'étude de théorèmes de plongements. J'ai dû me documenter préalablement sur la dimension topologique, et j'y consacre donc la première partie du mémoire. Dans la seconde, on introduit la notion de moyenne dimension topologique et dans la dernière, on étudie plusieurs cas particuliers de la conjecture de Lindenstrauss-Tsukamoto.

Je remercie Yonatan Gutman pour cette introduction à ce sujet et à la recherche en général, et pour sa disponibilité malgré la distance. Je remercie aussi David Burguet pour les discussions que nous avons eues et pour avoir co-dirigé mon master, et Frédéric Le Roux pour bien avoir voulu faire partie du jury lors de la soutenance.
\end{abstract}

\tableofcontents

\section{Dimension topologique}

Cette section est motivée par deux objets : l'introduction de la dimension périodique dans la troisième section, et l'étude de la moyenne dimension topologique. Celle-ci est définine comme une moyenne de la dimension topologique et on remarquera qu'elle se comporte relativement de la même façon. On utilise d'ailleurs les mêmes méthodes et outils dans les démonstrations des résultats de la section 2.

Dans ce chapitre, X est un espace topologique non vide.

\subsection{Définition}

\begin{Def}

Soit $\alpha$ un recouvrement de X. Soit $x\in X$. On pose :
\[ ord_x(\alpha)=\sum_{U\in \alpha} (\mathds{1}_U(x)) -1\]
et :
\[ ord(\alpha)=\sup_{x\in X} ord_x(\alpha)\]
On dit que $ord(\alpha)$ est l'ordre de $\alpha$.
\end{Def}

On a que $ord(\alpha) \in \mathbb{N} \cup \infty$.  Dans la suite, on considèrera des recouvrements ouverts finis, on aura donc $ord(\alpha) \in \mathbb{N}$.

Soient $\alpha=(A_i)_{i\in I}$ et $\beta=(B_j)_{j\in J}$ des recouvrements de X. On dit que $\beta$ est un raffinement de $\alpha$ si pour tout $j\in J$ il existe $i \in I$ tel que $B_j \subset A_i$. On note $\beta \succ \alpha$.

\begin{Def}
Soit $\alpha$ un recouvrement ouvert fini de X. On pose :
\[ D(\alpha)=\min_\beta ord(\beta) \]
où $\beta$ parcourt tous les recouvrements ouverts finis de X tels que $\beta \succ \alpha$.
\end{Def}

\begin{Rem}

1. On a $D(\alpha)\in \mathbb{N}$

2. $D(\alpha)\leq n$ si et seulement si il existe $\beta$ un recouvrement ouvert fini raffinant $\alpha$ tel que $ord(\beta)\leq n$.

3. Si $\alpha$ et $\tilde{\alpha}$ sont deux recouvrements ouverts finis tels que $\alpha \succ \tilde{\alpha}$, alors $D(\alpha)\geq D(\tilde{\alpha})$.
\end{Rem}

\begin{Def}
La dimension topologique de X est définie par :
\[ dim(X) = \sup_\alpha D(\alpha) \]
où $\alpha$ parcourt tous les recouvrements ouverts finis de X.
\end{Def}

\subsection{Premières propriétés}

\begin{propo}\label{dimfermé}
Soit $F\subset X$ un sous-ensemble fermé de X. Alors $dim(F)\leq dim(X)$.
\end{propo}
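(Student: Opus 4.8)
Le plan est de revenir à la définition de $dim(F)$ comme borne supérieure des $D(\alpha)$ sur les recouvrements ouverts finis $\alpha$ de $F$, et de montrer que chacun de ces $D(\alpha)$ est majoré par $dim(X)$. Je fixerais donc un recouvrement ouvert fini $\alpha=(A_i)_{i\in I}$ de $F$. Comme $F$ porte la topologie induite, chaque $A_i$ s'écrit $A_i = U_i \cap F$ avec $U_i$ ouvert de $X$. La famille $(U_i)_{i\in I}$ recouvre $F$ mais pas nécessairement $X$ tout entier; c'est précisément ici qu'interviendrait l'hypothèse que $F$ est fermé : l'ensemble $X\setminus F$ est alors ouvert, et
\[ \tilde\alpha = (U_i)_{i\in I} \cup \{X\setminus F\} \]
est un recouvrement ouvert fini de $X$.

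Par définition de $dim(X)$, on aurait $D(\tilde\alpha)\leq dim(X)$; le minimum définissant $D(\tilde\alpha)$ étant atteint, il existe un recouvrement ouvert fini $\tilde\beta$ de $X$ raffinant $\tilde\alpha$ et vérifiant $ord(\tilde\beta) = D(\tilde\alpha) \leq dim(X)$. On restreint alors ce raffinement à $F$ en posant $\beta = (V\cap F)_{V\in\tilde\beta}$, en écartant les ensembles vides, ce qui fournit un recouvrement ouvert fini de $F$.

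Resteraient deux vérifications. D'abord $\beta \succ \alpha$ : si $V\in\tilde\beta$ est inclus dans un $U_i$, alors $V\cap F \subset U_i\cap F = A_i$; et si $V\subset X\setminus F$, alors $V\cap F=\emptyset$ et on l'écarte. Ensuite le contrôle de l'ordre : pour tout $x\in F$ et tout $V\in\tilde\beta$, on a $\mathds{1}_{V\cap F}(x)=\mathds{1}_V(x)$ puisque $x\in F$, de sorte que $ord_x(\beta)=ord_x(\tilde\beta)\leq ord(\tilde\beta)$. En passant à la borne supérieure sur $x\in F$, on obtiendrait $ord(\beta)\leq ord(\tilde\beta)\leq dim(X)$, d'où $D(\alpha)\leq ord(\beta)\leq dim(X)$. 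Comme $\alpha$ est arbitraire, on conclurait $dim(F)\leq dim(X)$.

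L'obstacle principal — en réalité le seul endroit où l'hypothèse serait véritablement utilisée — est le passage d'un recouvrement de $F$ à un recouvrement de $X$ : les ouverts $U_i$ prolongeant les $A_i$ ne couvrent que $F$, et c'est exactement la fermeture de $F$ qui permet de compléter en ajoutant l'unique ouvert $X\setminus F$, sans perturber le comportement sur $F$. Tout le reste repose sur l'observation élémentaire que, sur $F$, l'appartenance à $V$ et à $V\cap F$ coïncident, si bien que la restriction d'un raffinement ne peut qu'en diminuer ou conserver l'ordre.
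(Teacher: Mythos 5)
Votre preuve est correcte et suit essentiellement le même chemin que celle du mémoire : compléter le recouvrement de $F$ en un recouvrement de $X$ en ajoutant l'ouvert $X\setminus F$ (c'est bien là qu'intervient la fermeture de $F$), prendre un raffinement réalisant $D(\tilde\alpha)\leq dim(X)$, puis le restreindre à $F$ en observant que l'ordre ne peut qu'en diminuer. Votre rédaction est même un peu plus soigneuse sur la conclusion, où vous écrivez correctement $D(\alpha)\leq ord(\beta)$ là où le texte du mémoire porte $ord(\alpha)\leq ord(\tilde\gamma)$, visiblement une coquille pour $D(\alpha)$.
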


\begin{proof}
Soit $\alpha$ un recouvrement ouvert fini de F. Il existe une famille d'ouverts de X, $({U_i})_{i\in I}$, telle que : $F\subset \bigcup_{i\in I}U_i$. Comme $X\backslash F$ est un ouvert de X, $\beta =({U_i})_{i\in I}\cup (X\backslash F)$ est un recouvrement ouvert fini de X. Soit $\gamma$ un recouvrement ouvert fini de X raffinant $\beta$ tel que $ord(\gamma)=D(\beta)$. Soit $\tilde{\gamma}=(U\cap F)_{U\in \gamma}$. Alors $\tilde{\gamma}$ est un recouvrement ouvert fini de F raffinant $\alpha$, et tel que $ord(\tilde{\gamma})\leq ord(\gamma)$
Donc $ord(\alpha)\leq ord(\tilde{\gamma})\leq ord(\gamma)=D(\beta)$. Finalement, $dim(F)\leq dim(X)$.
\end{proof}

\begin{Lemme}
Soit $\alpha=(U_i)_{i\in I}$ un recouvrement ouvert fini de X. Alors on peut trouver un recouvrement $\beta=(V_i)_{i\in I}$ tel que pour tout $i\in I$, $V_i\subset U_i$ et $ord(\beta)\leq D(\alpha)$.
\end{Lemme}

\begin{proof}
Soit $\gamma=(W_j)_{j\in J}$ un recouvrement ouvert fini de X tel que $\gamma \succ \alpha$ et $ord(\gamma)=D(\alpha)$. Alors on peut définir une application $\phi :J\to I$ telle que pour tout $j\in J$, $W_j\subset U_{\phi (j)}$. Alors posons :
\[ \beta=(V_i)_{i\in I}=(\bigcup_{j\in \phi^{-1}(i)} W_j)_{i\in I} \]
$\beta$ est un recouvrement ouvert fini de X et pour tout $i\in I$, on a $V_i\subset U_i$. De plus, soit $x\in X$. Alors $x\in V_i$ si et seulement s'il existe j dans $\phi^{-1}(i)$ tel que $x \in W_j$. On en déduit que $ord_x(\beta)\leq ord_x(\gamma)$. Donc finalement, ceci étant vrai pour tout $x\in X$, $ord(\beta)\leq D(\alpha)$.
\end{proof}

\begin{Lemme}\label{préunion}
Soit F un sous-ensemble fermé de X. Soit $\alpha=(U_i)_{i\in I}$ un recouvrement ouvert fini de X. Alors on peut trouver un recouvrement $\beta=(V_i)_{i\in I}$ tel que pour tout $i\in I$, $V_i\subset U_i$ et $ord_x(\beta)\leq dim(F)$ pour tout $x\in F$.
\end{Lemme}

\begin{proof}
Considérons le recouvrement ouvert fini de F : $(F\cap U_i)_{i\in I}$. Alors d'après le lemme précédent, on peut trouver un recouvrement $\gamma=(W_i)_{i\in I}$ tel que pour tout $i\in I$, $W_i\subset F \cap U_i$ et $ord(\gamma)\leq dim(F)$. Pour tout $W_i$ il existe $O_i$ un ouvert de X tel que $W_i=F\cap O_i$. Posons :
\[ \beta=(V_i)_{i\in I}= ((O_i\cup(X\backslash F))\cap U_i)_{i\in I}\]
Alors $\beta$ est un recouvrement ouvert fini de X tel que pour tout $i\in I$, $V_i \subset U_i$. De plus, si $x\in F$, $x\in V_i$ si et seulement si $x\in W_i$. Donc $ord_x(\beta)=ord_x(\gamma)\leq ord(\gamma)\leq dim(F)$.
\end{proof}

\begin{propo}\label{union}
Soient F et G deux sous-ensembles fermés de X. Alors :
\[ dim(F\cup G)=max(dim(F),dim(G))\]
\end{propo}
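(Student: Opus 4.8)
I would split the equality into two inequalities. The inequality $\max(dim(F),dim(G))\leq dim(F\cup G)$ is immediate: since $F$ and $G$ are closed in $X$, their union $F\cup G$ is closed in $X$, and $F$, $G$ are closed subsets of $F\cup G$. Applying \ref{dimfermé} (with $F\cup G$ as the ambient space) gives $dim(F)\leq dim(F\cup G)$ and $dim(G)\leq dim(F\cup G)$, hence the bound on the maximum.

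The substance is the reverse inequality $dim(F\cup G)\leq \max(dim(F),dim(G))$. The plan is to set $n=\max(dim(F),dim(G))$ and to work inside the topological space $Y=F\cup G$, in which $F$ and $G$ are both closed. It suffices to show that every finite open cover $\alpha=(U_i)_{i\in I}$ of $Y$ satisfies $D(\alpha)\leq n$; taking the supremum over all such $\alpha$ then yields $dim(Y)\leq n$. To produce a refinement of $\alpha$ of order at most $n$, I would apply \ref{préunion} twice in succession, once for each closed piece.

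Concretely, I first apply \ref{préunion} to the closed set $F\subset Y$ and the cover $\alpha$, obtaining a cover $\beta=(V_i)_{i\in I}$ with $V_i\subset U_i$ for all $i$ and $ord_x(\beta)\leq dim(F)\leq n$ for every $x\in F$. I then apply \ref{préunion} again, this time to the closed set $G\subset Y$ and the cover $\beta$, obtaining a cover $\gamma=(W_i)_{i\in I}$ with $W_i\subset V_i$ for all $i$ and $ord_x(\gamma)\leq dim(G)\leq n$ for every $x\in G$. By construction $W_i\subset V_i\subset U_i$, so $\gamma$ refines $\alpha$.

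The key point — and the step I expect to be the main obstacle — is verifying that $\gamma$ has order at most $n$ over \emph{all} of $Y$, not merely over $G$. For $x\in G$ the bound $ord_x(\gamma)\leq n$ holds directly. For $x\in F$ I would use the monotonicity of the pointwise order under shrinking of covers: since $W_i\subset V_i$, one has $\mathds{1}_{W_i}(x)\leq \mathds{1}_{V_i}(x)$ for every $i$, whence $ord_x(\gamma)\leq ord_x(\beta)\leq n$. This is precisely why the two applications of \ref{préunion} must be nested rather than independent: the second shrinking preserves the order bound already achieved on $F$. Since $Y=F\cup G$, every point of $Y$ falls into one of the two cases, so $ord(\gamma)\leq n$, and therefore $D(\alpha)\leq ord(\gamma)\leq n$, which completes the argument.
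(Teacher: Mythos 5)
Your proof is correct and follows essentially the same route as the paper: the easy inequality via \ref{dimfermé} applied inside $F\cup G$, then two nested applications of \ref{préunion} (first to $F$, then to $G$) and the pointwise monotonicity $ord_x(\gamma)\leq ord_x(\beta)$ coming from $W_i\subset V_i$ to bound the order on $F$. Your explicit remark that the applications must be nested rather than independent is exactly the mechanism the paper's proof relies on, just stated more carefully.
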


\begin{proof}
D'après la proposition précédente, on sait déjà que $dim(F\cup G)\geq max(dim(F),dim(G))$.

Soit $\alpha$ un recouvrement ouvert fini de $F\cup G$. En appliquant \ref{préunion} à $\alpha$ et à F, on obtient un recouvrement $\beta=(U_i)_{i\in I}$ de $F\cup G$. Puis on obtient un recouvrement $\gamma=(V_i)_{i\in I}$ de $F\cup G$ en appliquant de nouveau \ref{préunion} à $\beta$ et à G. $\gamma$ est tel que pour tout $i\in I$, $V_i \subset U_i$. Alors pour $x\in G$, on a $ord_x(\gamma)\leq dim(G)$. D'autre part, si $x\in V_i \in \gamma$, alors $x\in U_i$. Donc $ord_x(\gamma)\leq ord_x(\beta)\leq dim(F)$. Finalement, $ord_x(\gamma)\leq max(dim(F),dim(G))$. Comme $\gamma \succ \alpha$, on en déduit que $D(\alpha)\leq max(dim(F),dim(G))$. On a choisi $\alpha$ arbitrairement donc on peut conclure : $dim(F\cup G)\leq max(dim(F),dim(G))$.
\end{proof}

\begin{Def}
Soit $\alpha$ une famille finie de sous-ensembles de X. On définit :
\[maille(\alpha)=\max_{U \in \alpha} diam(U)\]
\end{Def}

\begin{propo}\label{maille}
Supposons que X est un espace métrique compact. Soit $n\in \mathbb{N}$. Alors $dim(X)\leq n$ si et seulement si pour tout $\epsilon>0$, il existe un recouvrement ouvert fini de X, $\alpha$ tel que $maille(\alpha)\leq \epsilon$ et $ord(\alpha)\leq n $.
\end{propo}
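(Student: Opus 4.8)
Je démontrerais cette équivalence en traitant séparément les deux implications, la compacité intervenant de façon décisive dans chaque sens.

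Pour le sens réciproque, qui me semble le plus délicat, je supposerais que pour tout $\epsilon>0$ il existe un recouvrement ouvert fini $\alpha$ de $X$ vérifiant $maille(\alpha)\leq\epsilon$ et $ord(\alpha)\leq n$, et je chercherais à établir $dim(X)\leq n$. Par définition de $dim(X)=\sup_\gamma D(\gamma)$, il suffit de montrer que $D(\gamma)\leq n$ pour tout recouvrement ouvert fini $\gamma$ de $X$. L'outil central est le lemme du nombre de Lebesgue : la compacité de $X$ fournit, associé à $\gamma$, un réel $\delta>0$ tel que toute partie de $X$ de diamètre $<\delta$ soit contenue dans un membre de $\gamma$. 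J'appliquerais alors l'hypothèse avec $\epsilon=\delta/2$ pour obtenir un recouvrement $\alpha$ de maille $\leq\delta/2$ et d'ordre $\leq n$. Chaque membre $U\in\alpha$ vérifiant $diam(U)\leq\delta/2<\delta$, il est inclus dans un membre de $\gamma$, ce qui prouve $\alpha\succ\gamma$. On en déduit $D(\gamma)\leq ord(\alpha)\leq n$, puis, $\gamma$ étant arbitraire, $dim(X)\leq n$.

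Pour le sens direct, je supposerais $dim(X)\leq n$ et je fixerais $\epsilon>0$. Je recouvrirais $X$ par les boules ouvertes de rayon $\epsilon/2$, donc de diamètre $\leq\epsilon$, et j'en extrairais par compacité un sous-recouvrement fini $\gamma$, qui est de maille $\leq\epsilon$. L'hypothèse $dim(X)\leq n$ donne $D(\gamma)\leq n$, et comme $D(\gamma)$ est un minimum, il existe un recouvrement ouvert fini $\alpha$ avec $\alpha\succ\gamma$ et $ord(\alpha)=D(\gamma)\leq n$. Puisque $\alpha$ raffine $\gamma$, chaque membre de $\alpha$ est inclus dans l'une de ces boules de diamètre $\leq\epsilon$, si bien que $maille(\alpha)\leq\epsilon$. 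Ce recouvrement $\alpha$ convient.

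Le principal obstacle réside dans le sens réciproque : il faut convertir une information de nature métrique (une maille petite) en une information combinatoire (le raffinement d'un recouvrement donné), conversion que permet exactement le lemme du nombre de Lebesgue, lui-même conséquence de la compacité. Le sens direct est quant à lui presque immédiat dès que l'on dispose, toujours par compacité, d'un recouvrement fini de petite maille auquel appliquer la définition de $dim$.
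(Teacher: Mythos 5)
Votre démonstration est correcte et suit essentiellement la même démarche que celle du mémoire : même sens direct (boules de rayon $\epsilon/2$, extraction finie, puis raffinement réalisant $D(\gamma)\leq n$), et même idée pour la réciproque, où le mémoire redémontre en fait à la main (via les boules $B(x,\frac{r_x}{2})$ et $\eta=\min r_x/2$) le lemme du nombre de Lebesgue que vous invoquez comme résultat connu. Rien à redire.
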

\begin{proof}
Supposons $dim(X)\leq n$. Soit $\epsilon>0$. Alors par compacité de X, on peut trouver $\alpha$ un recouvrement fini formé de boules ouvertes de rayon $\frac{\epsilon}{2}$. Alors $D(\alpha)\leq n$, donc il existe $\beta \succ \alpha$ tel que $ord(\beta)\leq n$. On a bien $maille(\beta)\leq \epsilon$.

Réciproquement, soit $\alpha$ un recouvrement ouvert fini de X. Pour tout $U\in \alpha$ et tout $x\in U$, il existe une boule ouverte centrée en x, de rayon $r_x$, inclue dans U. Les boules ouvertes de la forme $B(x,\frac{r_x}{2})$ recouvrent X, et par compacité de X, on peut en extraire un recouvrement fini $(B(x,\frac{r_x}{2}))_{x\in A}$. Alors posons $\eta=\min_{x\in X}\frac{r_x}{2}$. Soit $\beta$ tel que $maille(\beta)\leq \eta$ et $ord(\beta)\leq n$. Soit $V\in \beta$. Alors prenons $y_V$ un élément de V. Il existe $x\in A$ tel que $y_V\in B(x,\frac{r_x}{2})$ et $B(x,r_x)\subset V$. Alors pour tout $y\in V$,  $d(y,x)\leq diam(V) +d(y_V,x)<r_x$. Comme $B(x,r_x)$ est inclue dans un ouvert de $\alpha$, on en déduit que Y est inclus dans un ouvert de $\alpha$. Finalement, $\beta \succ \alpha$. Donc $D(\alpha)\leq n$. Donc finalement, $dim(X)\leq n$.
\end{proof}

\subsection{Dimension topologique des polyèdres}

Les polyèdres vont être des outils particulièrement importants dans l'étude de la dimension topologique. En particulier, on montre ici que la dimension topologique étend à des espaces topologiques la notion de dimension algébrique d'un espace vectoriel.

\subsubsection{Polyèdres}

\begin{Def}
Soit $\{ v_1, v_2,..., v_r\}$ une famille de points de $\mathbb{R}^m$. Alors on dit qu'elle est affinement indépendante si $\{v_2-v_1,v_3-v_1,...,v_{r}-v_1\}$ est une famille libre.
\end{Def}

\begin{Def}
Soit $r\in \mathbb{N}$. On dit que $\Delta$ est un r-simplexe de $\mathbb{R}^m$ si c'est l'enveloppe convexe d'un ensemble de r+1 points de $\mathbb{R}^m$, $\{ p_0, p_1,..., p_r\}$, affinement indépendants. Ces points sont appelés les sommets de $\Delta$. Une face de $\Delta$ est un simplexe dont les sommets sont pris parmi les sommets de $\Delta$.
\end{Def}

\begin{Def}
Un complexe simplicial de $\mathbb{R}^n$ est un ensemble fini $C$ de simplexes de $\mathbb{R}^n$ vérifiant :
\begin{enumerate}
\item si $\Delta \in C$, et si $F$ est une face de $\Delta$, alors $F\in C$.
\item si $\Delta \in C$ et $\Delta '\in C$, alors $\Delta \cap \Delta '$ est une face commune à $\Delta$ et $\Delta '$.
\end{enumerate}
\end{Def}

Les sommets de $C$ sont les sommets des simplexes composant $C$.

On appelle support de $C$, et on note $|C|$, le sous-ensemble de $\mathbb{R}^n$ défini par l'union de tous les simplexes composant $C$.

On appelle dimension combinatoire de $C$ la dimension maximale des simplexes composant $C$.

\begin{Def}
Un espace topologique X est un polyèdre s'il existe un complexe simplicial $C$ tel que X soit homéomorphe au support $|C|$ de $C$.
\end{Def}

\begin{Def}
Un complexe simplicial abstrait est la donnée d'un couple $(V, \Sigma)$, où $V$ est un ensemble fini dont les éléments sont appelés les sommets et $\Sigma$ un ensemble de parties de V, appelées simplexes, vérifiant qu'une sous-partie d'un simplexe est aussi un simplexe.
\end{Def}

Soit $(V, \Sigma)$ un complexe simplicial abstrait. On va lui associer un complexe simplicial $C$ qu'on appellera sa réalisation géométrique.
Posons $n=card(V)$ et $V={v_1,v_2,...,v_n}$. On note $(e_1,e_2,...,e_n)$ la base canonique de $\mathbb{R}^n$. Alors à tout simplexe $\sigma$ on associe le simplexe de $\mathbb{R}^n$ ayant pour ensemble de sommets : $\{ e_i|v_i\in \sigma \}$. Alors l'ensemble de ces simplexes est bien un complexe simplicial de $\mathbb{R}^n$.

Réciproquement, soit $C$ un complexe simplicial de $\mathbb{R}^n$. Alors si V est l'ensemble des sommets de $C$ et $\Sigma$ est composé des parties de V qui sont les ensembles des sommets d'un simplexe de $C$, le couple $(V,\Sigma)$ est un complexe simplicial abstrait. On dit que c'est le complexe simplicial abstrait associé à $C$.

\begin{Ex}
Soit $V=\{ 1,2,3,4\}$ et $\Sigma =\{ \emptyset, \{ 1\} ,\{ 2\} ,\{ 3\} ,\{ 4\} ,\{ 2,3\} ,\{ 3,4\} ,\{ 2,4\} ,\{ 1,2\} ,\{ 2,3,4\} \}$ C'est un complexe simplicial abstrait. Sa réalisation géométrique est un complexe simplicial de $\mathbb{R}^4$, composé de 4 1-simplexes, 4 2-simplexes, et un 3-simplexe.
\end{Ex}

Voici un autre exemple important dans la suite :

\begin{Def}\label{nerf}
Soit $\alpha =(A_i)_{i\in \mathbb{N}}$ un recouvrement d'un ensemble X. Alors on appelle nerf de $\alpha$ le complexe simplicial abstrait $(V,\Sigma)$ où $V=I$ et $\Sigma$ est l'ensemble des $J\subset I$ tels que
\[ \bigcap_{j\in J}A_j\neq \emptyset \]
\end{Def}

Dans la fin de cette section, on commence à relier la dimension combinatoire d'un polyèdre et sa dimension topologique. On verra dans la suite qu'elles sont en fait égales. C'est un résultat important pour la théorie de la dimension topologique, notamment grâce à \ref{reci}.

\begin{Def}
Soit s un sommet d'un complexe simplicial $C$. La réunion de tous les simplexes de $C$ ne contenant pas s est un ensemble fermé. On note $E_C(s)$ son complémentaire dans $|C|$. On dit que c'est l'étoile ouverte de s.
\end{Def}

\begin{propo}\label{étoile}
Soit $C$ un complexe simplicial de $\mathbb{R}^n$, et S l'ensemble de ses sommets. Alors $(E_C(s))_{s\in S}$ est un recouvrement ouvert fini de $|C|$ d'ordre m, où m est la dimension combinatoire de $C$.
\end{propo}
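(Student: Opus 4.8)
Le plan est de décrire explicitement, pour chaque point $x \in |C|$, les sommets $s$ tels que $x \in E_C(s)$. L'outil central est la notion de \emph{porteur} de $x$ : je montrerais d'abord que tout point $x$ de $|C|$ appartient à l'intérieur relatif d'un unique simplexe de $C$, que je noterais $\sigma(x)$. L'existence s'obtient en prenant un simplexe de dimension minimale parmi ceux contenant $x$ : un tel simplexe contient nécessairement $x$ dans son intérieur relatif, sinon $x$ appartiendrait à l'une de ses faces propres, laquelle est encore un simplexe de $C$ d'après l'axiome 1, ce qui contredirait la minimalité. L'unicité découle de l'axiome d'intersection : si $x$ est dans l'intérieur relatif de $\Delta$ et de $\Delta'$, alors $\Delta \cap \Delta'$ est une face commune contenant un point intérieur à chacun, ce qui force $\Delta = \Delta' = \Delta \cap \Delta'$.

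L'étape clé, et le principal obstacle, serait d'établir l'équivalence
\[ x \in E_C(s) \iff s \text{ est un sommet de } \sigma(x). \]
Pour le sens direct je raisonnerais par contraposée : si $s$ n'est pas un sommet de $\sigma(x)$, alors $\sigma(x)$ est lui-même un simplexe de $C$ contenant $x$ et ne contenant pas $s$, donc $x$ appartient à la réunion des simplexes ne contenant pas $s$, c'est-à-dire $x \notin E_C(s)$. Pour la réciproque, je supposerais $s$ sommet de $\sigma(x)$ et montrerais que $x$ n'appartient à aucun simplexe $\Delta'$ ne contenant pas $s$ : si c'était le cas, $\sigma(x) \cap \Delta'$ serait une face de $\sigma(x)$ contenant le point $x$ qui est intérieur à $\sigma(x)$, d'où $\sigma(x) \cap \Delta' = \sigma(x)$ ; ainsi $\sigma(x)$ serait une face de $\Delta'$ et $s$ un sommet de $\Delta'$, contradiction. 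C'est ici que l'axiome d'intersection des complexes simpliciaux joue un rôle essentiel.

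Une fois cette équivalence acquise, les conclusions sont immédiates. Chaque $E_C(s)$ est ouvert par définition (complémentaire d'un fermé) et la famille est finie car $S$ l'est. Comme tout $x \in |C|$ a un porteur $\sigma(x)$ possédant au moins un sommet $s$, on a $x \in E_C(s)$, donc $(E_C(s))_{s\in S}$ recouvre $|C|$. Enfin, pour le calcul de l'ordre : le nombre de sommets $s$ tels que $x \in E_C(s)$ est exactement le nombre de sommets de $\sigma(x)$, soit $dim(\sigma(x)) + 1$, d'où
\[ ord_x\bigl((E_C(s))_{s\in S}\bigr) = \sum_{s\in S} \mathds{1}_{E_C(s)}(x) - 1 = dim(\sigma(x)). \]
En prenant le supremum sur $x$, et en remarquant que tout simplexe $\Delta$ de $C$ est le porteur de son barycentre (dont toutes les coordonnées barycentriques sont strictement positives), on obtient que l'ordre vaut la dimension combinatoire maximale, c'est-à-dire $m$.
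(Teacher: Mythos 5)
Votre démonstration est correcte et suit essentiellement la même voie que celle du mémoire : identification du simplexe porteur de $x$ grâce à l'axiome d'intersection, caractérisation de l'appartenance à $E_C(s)$ par les sommets de ce porteur, majoration de l'ordre en chaque point par la dimension du porteur, et minoration par un point intérieur (chez vous le barycentre) d'un simplexe de dimension maximale. Votre rédaction se contente de rendre explicites l'existence et l'unicité du porteur $\sigma(x)$ ainsi que l'équivalence $x\in E_C(s)\iff s$ est un sommet de $\sigma(x)$, que le texte établit au fil du même argument.
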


\begin{proof}
Soit $x\in |C|$. Alors il existe $\Delta$ un simplexe de $C$ tel que $x\in \mathring{\Delta}$. Soit s un sommet de $\Delta$. Alors $x\in E_C(s)$. En effet, soit $\tilde{\Delta}$ un simplexe contenant x. Quitte à prendre une face de $\tilde{\Delta}$, on peut supposer que $x\in \mathring{\tilde{\Delta}}$. Alors $\Delta \cap \tilde{\Delta}$ est une face de $\Delta$ qui rencontre $\mathring{\Delta}$, donc c'est $\Delta$. De même on montre $\Delta \cap \tilde{\Delta}=\tilde{\Delta}$ donc $\Delta =\tilde{\Delta}$. Donc $s\in \tilde{\Delta}$. Finalement, $x\in E_C(s)$. On en déduit que $(E_C(s))_{s\in S}$ est un recouvrement ouvert fini de $|C|$.

Soit maintenant $x\in |C|$, et $ s_1,...,s_r$  les sommets tels que $x\in E_C(s_i)$. Soit $\Delta$ tel que $x\in \Delta$. Alors $ s_1,...,s_r$ sont des sommets de $\Delta$. Donc r est strictement inférieur à la dimension combinatoire de $\Delta$, donc à m. Ceci étant vrai pour tout x, $ord(\alpha)\leq m$.

Réciproquement, soit $\Delta$ un m-simplexe de $C$, de sommets $ s_0,...,s_m$, et $x\in \mathring{\Delta}$. Alors $x\in E_C(s)$ pour tout $s\in \{ s_0,...,s_m\}$. On en déduit $ord(\alpha)\geq m$. Finalement on a bien $ord(\alpha)= m$.
\end{proof}

On admet ici un lemme (technique), démontré dans \cite{Coo}.
\begin{Lemme}
Soit $C$ un complexe simplicial de $\mathbb{R}^n$. Alors pour tout $N\in \mathbb{N}$, on peut construire un complexe simplicial $C_N$, qui a même dimension combinatoire et même support que $|C|$, et tel que :
\[ \lim_{N\to \infty}\max_{\Delta \in C_N}diam(\Delta)=0 \]

\end{Lemme}

\begin{propo}\label{combi}
Soit $|C|$ un complexe simplicial de $\mathbb{R}^n$, et m sa dimension combinatoire. Alors $dim(|C|)\leq m$.
\end{propo}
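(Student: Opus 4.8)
Le plan est d'appliquer \ref{maille} à l'espace $|C|$. Celui-ci est bien un espace métrique compact : en tant que réunion finie de simplexes, chacun étant un fermé borné de $\mathbb{R}^n$, $|C|$ est compact et hérite de la distance euclidienne. Il suffit donc de montrer que pour tout $\epsilon >0$, il existe un recouvrement ouvert fini de $|C|$ de maille $\leq \epsilon$ et d'ordre $\leq m$ ; la conclusion $dim(|C|)\leq m$ en résultera par \ref{maille}.

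Pour produire un tel recouvrement, j'utiliserais les étoiles ouvertes. D'après \ref{étoile}, la famille $(E_C(s))_{s\in S}$ est déjà un recouvrement ouvert fini d'ordre exactement $m$ ; le seul point manquant est le contrôle de sa maille, que rien ne garantit petite pour $C$ lui-même. C'est ici qu'intervient le lemme technique précédent : pour tout $N$ il fournit un complexe simplicial $C_N$ de même support $|C|$ et de même dimension combinatoire $m$, dont le diamètre maximal des simplexes tend vers $0$ quand $N\to\infty$. En appliquant \ref{étoile} à $C_N$, on obtient pour chaque $N$ un recouvrement ouvert fini $(E_{C_N}(s))_{s}$ de $|C|$ d'ordre $m$, et il reste à relier sa maille au diamètre maximal des simplexes de $C_N$.

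L'étape clé, et le principal obstacle, est précisément cette majoration de la maille du recouvrement par étoiles ouvertes. Je montrerais que $diam(E_{C_N}(s))\leq 2\max_{\Delta \in C_N} diam(\Delta)$. L'argument repose sur le fait suivant : si $x\in E_{C_N}(s)$, alors $x$ n'appartient à aucun simplexe ne contenant pas $s$, donc tout simplexe de $C_N$ contenant $x$ contient aussi $s$ ; en choisissant un simplexe $\Delta$ contenant $x$ (il en existe car $x\in|C|$), on a $s,x\in\Delta$ et donc $\|x-s\|\leq diam(\Delta)\leq \max_{\Delta} diam(\Delta)$. Pour deux points $x,y\in E_{C_N}(s)$, l'inégalité triangulaire via le sommet commun $s$ donne alors $\|x-y\|\leq 2\max_{\Delta} diam(\Delta)$, d'où la majoration annoncée de $maille((E_{C_N}(s))_s)$.

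Une fois cette majoration acquise, la conclusion est immédiate. Soit $\epsilon>0$ fixé ; comme $2\max_{\Delta\in C_N}diam(\Delta)\to 0$, on peut choisir $N$ assez grand pour que $2\max_{\Delta\in C_N}diam(\Delta)\leq \epsilon$. Le recouvrement $(E_{C_N}(s))_s$ est alors un recouvrement ouvert fini de $|C|$ de maille $\leq\epsilon$ et d'ordre $m$. Ceci valant pour tout $\epsilon>0$, \ref{maille} (avec $n=m$) permet de conclure que $dim(|C|)\leq m$.
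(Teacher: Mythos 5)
Votre preuve est correcte et suit essentiellement la même démarche que celle du mémoire : subdivision $C_N$ fournie par le lemme technique, recouvrement par étoiles ouvertes d'ordre $m$ via \ref{étoile}, majoration $maille\leq 2\max_{\Delta\in C_N}diam(\Delta)$ par l'inégalité triangulaire passant par le sommet $s$, puis conclusion par \ref{maille}. Votre justification du fait que tout simplexe contenant $x\in E_{C_N}(s)$ contient $s$ est même un peu plus explicite que dans le texte, qui se contente d'affirmer l'existence de simplexes $\Delta_x,\Delta_y$ ayant $s$ pour sommet.
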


\begin{proof}
Soit $N\in \mathbb{N}$. Soit $\alpha_N=(E_{C_N}(s))_{s\in S_N}$, où $S_N$ est l'ensemble des sommets de $C_N$, défini dans le lemme précédent. Alors d'après \ref{étoile}, $\alpha_N$ est un recouvrement ouvert fini de $|C_N|$, donc de $|C|$, d'ordre m.

Alors $maille(\alpha_N)\leq 2(\max_{\Delta \in C_N}diam(\Delta))$. En effet, si $s\in S$ et $x,y\in E_C(s)$, il existe $\Delta_x$ et $\Delta_y$ contenant respectivement x et y et ayant s pour sommet. Alors on a :
\[ d(x,y)\leq d(x,s)+d(s,y)\leq diam(\Delta_1)+diam(\Delta_2)\]
Donc $diam(E_{C_N}(s))\leq 2(\max_{\Delta \in C_N}diam(\Delta))$, d'où le résultat.

On en déduit que $\lim_{N\to \infty} maille(\alpha_N) =0$. D'après \ref{maille}, on a donc que $dim(|C|)\leq m$.

\end{proof}

\subsubsection{Lemme de Lebesgue}

Le lemme de Lebesgue va nous permettre en particulier d'achever le calcul de la dimension  des polyèdres.

\begin{propo}\label{Lebesgue}
Soit $n\in \mathbb{N}$ et $\alpha$ un recouvrement ouvert fini du cube $[0,1]^n$, tel qu'aucun élément de $\alpha$ ne rencontre deux faces opposées de $[0,1]^n$. Alors on a $D(\alpha)\geq n$.
\end{propo}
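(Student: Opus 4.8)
L'idée est de se ramener à un énoncé portant uniquement sur l'ordre, puis de construire, à l'aide d'une partition de l'unité, une application du cube dans lui-même qui préserve chaque face et dont je contrôlerai l'image. Commençons par la réduction. Comme $D(\alpha)=\min_{\beta\succ\alpha}ord(\beta)$, il suffit de montrer que tout recouvrement ouvert fini $\beta=(U_j)_{1\leq j\leq m}$ de $[0,1]^n$ dont aucun élément ne rencontre deux faces opposées vérifie $ord(\beta)\geq n$. En effet, si $\beta\succ\alpha$, chaque $U_j$ est inclus dans un élément de $\alpha$, donc $\beta$ hérite de l'hypothèse faite sur $\alpha$ ; en appliquant l'énoncé sur l'ordre à chaque raffinement $\beta$ de $\alpha$ et en prenant le minimum, on obtiendra bien $D(\alpha)\geq n$.

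Soit donc $\beta=(U_j)_{1\leq j\leq m}$ un tel recouvrement, et soit $(\phi_j)_{1\leq j\leq m}$ une partition de l'unité qui lui est subordonnée (elle existe, $[0,1]^n$ étant métrique compact : on peut prendre $\phi_j(x)=d(x,[0,1]^n\setminus U_j)/\sum_k d(x,[0,1]^n\setminus U_k)$). Pour chaque coordonnée $i\in\{1,\dots,n\}$, je pose $A_i^-=\{j : U_j\cap\{x_i=0\}\neq\emptyset\}$ et je définis $\psi_i=\sum_{j\in A_i^-}\phi_j$. Si $x$ vérifie $x_i=0$ et $\phi_j(x)>0$, alors $x\in U_j$ rencontre la face $\{x_i=0\}$, donc $j\in A_i^-$ et $\psi_i(x)=\sum_k\phi_k(x)=1$. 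Si au contraire $x_i=1$ et $\phi_j(x)>0$, alors $U_j$ rencontre $\{x_i=1\}$ ; comme aucun élément ne rencontre deux faces opposées, $U_j$ ne rencontre pas $\{x_i=0\}$, donc $j\notin A_i^-$ et $\psi_i(x)=0$. Ainsi l'application $g=(1-\psi_1,\dots,1-\psi_n)\colon[0,1]^n\to[0,1]^n$ vérifie $g_i(x)=\varepsilon$ sur chaque face $\{x_i=\varepsilon\}$, $\varepsilon\in\{0,1\}$ : elle préserve chaque face du cube.

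L'\emph{étape principale}, et le point où se concentre la difficulté topologique, est que cette application est surjective sur l'intérieur du cube. L'homotopie affine $H_t(x)=(1-t)x+tg(x)$ relie $g$ à l'identité en restant dans $[0,1]^n$ par convexité, et en restant dans $\partial[0,1]^n$ pour $x\in\partial[0,1]^n$ puisque $g$ fixe la coordonnée valant $0$ ou $1$. Si un point intérieur $p$ n'était pas atteint par $g$, la projection radiale depuis $p$ composée avec $g$ fournirait une application $[0,1]^n\to\partial[0,1]^n$ dont la restriction au bord est $g|_{\partial}$, donc homotope à l'identité : ceci contredit le théorème de non-rétraction de Brouwer, que j'admets ici. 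Donc $g$ atteint tout point intérieur, et en particulier $g([0,1]^n)$ est de mesure de Lebesgue strictement positive.

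Il reste à majorer l'image de $g$ en fonction de l'ordre. L'application $F(x)=(\phi_1(x),\dots,\phi_m(x))$ envoie $[0,1]^n$ dans le nerf géométrique $|N(\beta)|\subset\mathbb{R}^m$ de $\beta$ (voir \ref{nerf}) : le support de $F(x)$ est contenu dans $\{j : x\in U_j\}$, qui est un simplexe du nerf car d'intersection non vide. Par ailleurs $g=G\circ F$ où $G\colon\mathbb{R}^m\to\mathbb{R}^n$ est l'application affine de coordonnées $G_i(y)=1-\sum_{j\in A_i^-}y_j$. Or $|N(\beta)|$ est de dimension combinatoire $ord(\beta)$, et $G$ est affine en restriction à chaque simplexe ; si $ord(\beta)\leq n-1$, alors $G(|N(\beta)|)$ est une union finie d'images affines de simplexes de dimension $\leq n-1$, donc est contenue dans une union finie de sous-espaces affines de dimension $\leq n-1$ : elle est de mesure de Lebesgue nulle dans $\mathbb{R}^n$. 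Comme $g([0,1]^n)\subseteq G(|N(\beta)|)$, ceci contredit la positivité de la mesure de $g([0,1]^n)$. Donc $ord(\beta)\geq n$, ce qui, par la réduction initiale, donne $D(\alpha)\geq n$.
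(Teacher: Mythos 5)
Votre démonstration est correcte, et si son noyau combinatoire coïncide avec celui du mémoire, son aboutissement topologique emprunte une route réellement différente. Dans les deux cas, on prend une partition de l'unité subordonnée à un raffinement $\beta$ (qui hérite bien de l'hypothèse sur les faces, votre réduction initiale est juste), et on envoie chaque ouvert sur un sommet de $\{0,1\}^n$ déterminé par les faces qu'il rencontre, de sorte que l'application barycentrique se factorise à travers la réalisation du nerf de $\beta$, de dimension combinatoire $ord(\beta)$. Le mémoire choisit $S(i)|_k=1$ quand $U_i$ rencontre $\{x_k=0\}$ : son application $\phi$ envoie alors chaque face sur la face \emph{opposée} ; l'image étant d'intérieur vide (Baire), la projection radiale depuis un point évité fournit une application du cube dans lui-même sans point fixe, contredisant le théorème du point fixe de Brouwer. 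Vous faites le choix opposé de signe : votre $g$ \emph{préserve} chaque face, donc est homotope à l'identité relativement au bord via $H_t$ ; vous en déduisez que $g$ atteint tout point intérieur grâce au théorème de non-rétraction, puis concluez par un argument de mesure contre la factorisation $g=G\circ F$ à travers le nerf. Les deux boîtes noires (point fixe de Brouwer, non-rétraction de la boule sur son bord) sont des énoncés équivalents et de même coût ; votre variante a l'avantage de rendre transparente la raison géométrique de la minoration (le cube de dimension $n$ ne peut pas se factoriser à travers un polyèdre de dimension $\leq n-1$), celle du mémoire l'avantage de n'utiliser que l'énoncé le plus standard. Deux points mineurs à signaler. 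D'une part, la version de la non-rétraction dont vous avez besoin est légèrement plus forte que l'énoncé usuel : il faut exclure toute application $[0,1]^n\to\partial[0,1]^n$ dont la restriction au bord est \emph{homotope} à l'identité de $\partial[0,1]^n$, et non égale à elle ; cela s'en déduit par le recollement standard de l'homotopie sur un collier du bord, qu'il vaudrait la peine d'expliciter puisque vous « admettez » seulement la non-rétraction. D'autre part, votre argument de mesure peut être remplacé par Baire, exactement comme dans le mémoire : si $ord(\beta)\leq n-1$, l'image de $g$ est contenue dans une union finie de sous-espaces affines de dimension $\leq n-1$, donc d'intérieur vide, ce qui contredit directement le fait qu'elle contient l'intérieur du cube ; cela évite d'invoquer la mesure de Lebesgue.
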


\begin{proof}
Soit $\beta=(U_i)_{i\in I}$ un recouvrement ouvert fini raffinant $\alpha$. Montrons que $ord(\beta)\geq n$. Supposons le contraire : $ord(\beta)<n$. On va construire une fonction $\psi : [0,1]^n \to [0,1]^n$ continue qui n'admet pas de point fixe. On aura alors obtenu une contradiction avec le théorème du point fixe de Brouwer.

Pour tout $i\in I$, on considère le sommet $S(i)\in \{0,1\}^n$, défini par :

\[ S(i)|_k=
\left\{
\begin{array}{ll}
1 \text{ si }U_i \text{ rencontre le bord } \{ x_k=0 \} \\
0\text{ sinon}
\end{array}
\right.
\]

Alors si $U_i$ rencontre une face, $S(i)$ se situe sur la face opposée. En effet, comme $\beta \succ \alpha$, aucun élément de $\beta$ n'intersecte deux faces opposées du cube  $[0,1]^n$.
Soit $(\rho_i)_{i\in I}$ une partition de l'unité adaptée au recouvrement $\beta$. On considère la fonction $\phi$ définie par :
\[ \forall x\in [0,1]^n, \phi(x)=\sum_{i\in I}\rho_i(x)S(i)\]
Alors si x est sur une face de $[0,1]^n$, $\phi (x)$ appartient à la face opposée.

D'autre part, soit $x\in [0,1]^n$. Alors $\phi(x)$ est dans l'enveloppe convexe de la famille $\{ S(i) | x\in U_i \}$, qui est de cardinal < n car $ord(\beta)<n$. Donc l'image de $\phi$, notée $Im(\phi)$, est inclue dans une réunion finie de sous-espaces affines de $[0,1]^n$ de dimension $<n$. Ces espaces sont fermés et d'intérieur vide dans $[0,1]^n$, donc d'après le théorème de Baire, cette union est d'intérieur vide. Donc $Im(\phi)$ est d'intérieur vide dans $[0,1]^n$.

Il existe donc un élément $\omega$ dans $[0,1]^n\backslash Im(\phi)$. On considère $\pi :[0,1]^n\backslash \{ \omega \} \to \partial [0,1]^n$ la projection qui à un point x associe l'intersection du bord de $[0,1]^n$ avec la demi-droite partant de $\omega$ et passant par x. En particulier, $\pi$ laisse fixe tout point du bord.

Considérons l'application $\psi = \pi \circ \phi : [0,1]^n \to [0,1]^n$. C'est bien une application continue, qui n'admet pas de point fixe. En effet, son image est inclue dans le bord de $[0,1]^n$. Or tout point du bord est envoyé par $\psi$ sur la face opposée du cube.

\end{proof}

\begin{coro}
Soit $n\in \mathbb{N}$. Alors $dim([0,1]^n)=n$.
\end{coro}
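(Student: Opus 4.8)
The plan is to prove $\dim([0,1]^n) = n$ by establishing the two inequalities separately, combining results already at our disposal. For the upper bound $\dim([0,1]^n) \leq n$, I would observe that the cube $[0,1]^n$ is (homeomorphic to) the support of a simplicial complex of combinatorial dimension $n$: for instance one triangulates the cube into $n$-simplices, which is a standard construction. Then \ref{combi} applies directly and gives $\dim([0,1]^n) \leq n$.

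For the lower bound $\dim([0,1]^n) \geq n$, the idea is to exhibit a single open cover $\alpha$ of the cube with $D(\alpha) \geq n$, since $\dim$ is the supremum of $D(\alpha)$ over all finite open covers. The natural candidate is a cover by small sets chosen so that no element meets two opposite faces of the cube — this is exactly the hypothesis of the Lebesgue lemma \ref{Lebesgue}. Concretely, by compactness one can find a finite open cover $\alpha$ whose mesh is strictly smaller than the distance $1$ between two opposite faces $\{x_k = 0\}$ and $\{x_k = 1\}$; then no single element of $\alpha$ can intersect both, for any coordinate $k$. Applying \ref{Lebesgue} to this cover yields $D(\alpha) \geq n$, and hence $\dim([0,1]^n) \geq n$.

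The two bounds together give $\dim([0,1]^n) = n$. I expect the main subtlety to lie not in the lower bound — where the Lebesgue lemma does all the heavy lifting — but in justifying cleanly that the cube is a polyhedron of combinatorial dimension exactly $n$, so that \ref{combi} applies. One must either invoke a known triangulation of the cube or argue directly; care is needed because \ref{combi} is stated for $|C|$ the support of a simplicial complex, and one should make explicit the homeomorphism between $[0,1]^n$ and such a support. A small case check, such as verifying that the standard $n$-simplex or a product triangulation covers the cube with top-dimensional cells of dimension $n$, suffices. Once that identification is in place, the corollary follows immediately from \ref{combi} and \ref{Lebesgue}.
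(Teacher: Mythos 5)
Your proposal is correct and follows essentially the same route as the paper: the upper bound via \ref{combi}, identifying the cube with the support of a simplicial complex of combinatorial dimension $n$ (the paper simply observes that $[0,1]^n$ is homeomorphic to the support of a single $n$-simplexe of $\mathbb{R}^n$, which sidesteps the triangulation you worry about), and the lower bound via \ref{Lebesgue} applied to a cover none of whose elements meets two opposite faces. Your added justification that such a cover exists — mesh strictly less than $1$ by compactness — is a detail the paper leaves implicit, and it is sound.
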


\begin{proof}
$[0,1]^n$ est homéomorphe au support d'un n-simplexe de $\mathbb{R}^n$. D'après \ref{combi}, on a donc $dim([0,1]^n)\leq n$. D'autre part, soit $\alpha$ un recouvrement ouvert fini de $[0,1]^n$, tel qu'aucun ouvert dans $\alpha$ ne rencontre deux faces opposées du cube $[0,1]^n$. Alors d'après \ref{Lebesgue}, $D(\alpha)\geq n$. Donc $dim([0,1]^n)\geq n$.
\end{proof}

On obtient ainsi des exemples d'espaces de dimension n pour tout entier n. On a aussi $dim([0,1]^\mathbb{N})=\infty$. En effet, pour tout $n\in \mathbb{N}$, $[0,1]^n$ est un sous-espace fermé de $[0,1]^\mathbb{N}$, donc $n=dim[0,1]^n)\leq dim([0,1]^\mathbb{N})$.

\begin{coro}\label{dimpolyedre}
Soit $n\in \mathbb{N}$ et $C$ un complexe simplicial de $\mathbb{R}^n$. Alors la dimension topologique du support $|C|$ de $C$ est la dimension combinatoire de $C$.
\end{coro}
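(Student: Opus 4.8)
Le plan est d'établir séparément les deux inégalités. L'une est déjà acquise : \ref{combi} donne directement $dim(|C|)\leq m$, où $m$ désigne la dimension combinatoire de $C$. Il ne reste donc qu'à montrer $dim(|C|)\geq m$.

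Pour cette inégalité inverse, je partirais d'un $m$-simplexe $\Delta$ de $C$, qui existe par définition même de la dimension combinatoire. Comme $\Delta$ est un simplexe de $C$, c'est un compact de $\mathbb{R}^n$, donc un fermé de $|C|$, et \ref{dimfermé} fournit alors $dim(\Delta)\leq dim(|C|)$. Tout revient ainsi à calculer $dim(\Delta)$.

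Le point clé est que tout $m$-simplexe est homéomorphe au cube $[0,1]^m$ : un $m$-simplexe est un corps convexe compact, d'intérieur non vide dans l'espace affine de dimension $m$ qu'il engendre, et un tel corps est homéomorphe à la boule fermée de dimension $m$, elle-même homéomorphe à $[0,1]^m$. Puisque la dimension topologique est invariante par homéomorphisme --- les recouvrements ouverts finis, la relation de raffinement et la fonction ordre se transportent par homéomorphisme ---, le corollaire précédent donne $dim(\Delta)=dim([0,1]^m)=m$.

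En combinant, on obtient $m=dim(\Delta)\leq dim(|C|)\leq m$, d'où l'égalité annoncée. Le seul point réellement à justifier avec soin est l'homéomorphisme entre un simplexe et le cube --- ou, de manière équivalente, l'invariance topologique de la dimension jointe au calcul de $dim([0,1]^m)$ --- ; tout le reste découle immédiatement des résultats déjà démontrés.
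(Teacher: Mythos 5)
Votre preuve est correcte et suit exactement la démarche du mémoire : majoration par \ref{combi}, puis minoration en appliquant \ref{dimfermé} à un $m$-simplexe $\Delta$ de $C$, homéomorphe à $[0,1]^m$ et donc de dimension $m$ d'après le corollaire précédent. Vous explicitez simplement davantage l'homéomorphisme entre le simplexe et le cube (corps convexe compact d'intérieur non vide) et l'invariance topologique de la dimension, points que le mémoire admet implicitement.
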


\begin{proof}
On sait déjà d'après \ref{combi} que $dim(|C|)\leq m$. Soit $\Delta$ un m-simplexe de $C$. Alors il est homéomorphe à $[0,1]^m$, donc de dimension m. Donc d'après \ref{dimfermé}, $dim(|C|)\geq m$.
\end{proof}

\begin{coro}
Soient P et Q des polyèdres. Alors :
\[ dim(P\times Q)=dim(P)+dim(Q)\]
\end{coro}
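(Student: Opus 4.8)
Le plan est de se ramener aux dimensions combinatoires grâce à \ref{dimpolyedre}, puis d'établir séparément les deux inégalités. Écrivons $P \cong |C|$ et $Q \cong |D|$, où $C$ et $D$ sont des complexes simpliciaux de dimensions combinatoires respectives $p$ et $q$ ; d'après \ref{dimpolyedre} on a $\dim(P)=p$ et $\dim(Q)=q$, de sorte qu'il suffit de montrer $\dim(P\times Q)=p+q$. Comme la dimension topologique est invariante par homéomorphisme (elle est définie à partir des recouvrements ouverts), on pourra travailler directement avec $|C|\times |D|$.

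Pour la minoration, je commencerais par choisir un $p$-simplexe $\Delta$ de $C$ et un $q$-simplexe $\Delta'$ de $D$ (ils existent par définition de la dimension combinatoire). Le produit $\Delta \times \Delta'$ est un sous-ensemble fermé de $P\times Q$, car $\Delta$ et $\Delta'$ sont compacts donc fermés, et il est homéomorphe à $[0,1]^p \times [0,1]^q = [0,1]^{p+q}$, un $r$-simplexe étant homéomorphe à $[0,1]^r$. Puisque $\dim([0,1]^{p+q})=p+q$ d'après le corollaire précédent, \ref{dimfermé} donne immédiatement $\dim(P\times Q)\geq p+q$.

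Pour la majoration, l'idée est de montrer que $P\times Q$ est lui-même un polyèdre dont on contrôle la dimension combinatoire. Le produit $|C|\times |D|$ possède une structure cellulaire naturelle dont les cellules sont les produits $\sigma \times \tau$, pour $\sigma \in C$ et $\tau \in D$ : chacune est un polytope convexe de dimension $\dim(\sigma)+\dim(\tau)\leq p+q$. Le point technique est de trianguler ces produits de façon compatible (la triangulation canonique en escalier, obtenue en fixant un ordre total sur les sommets de $C$ et de $D$) afin d'obtenir un complexe simplicial $E$ de support $|C|\times |D|$ et de dimension combinatoire $\max_{\sigma,\tau}\bigl(\dim(\sigma)+\dim(\tau)\bigr)=p+q$. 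Une fois cette triangulation disponible, \ref{combi} donne $\dim(P\times Q)=\dim(|E|)\leq p+q$, et en combinant avec la minoration on conclut à l'égalité.

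Le principal obstacle est précisément la construction de cette triangulation du produit : il faut vérifier que les triangulations des différentes cellules $\sigma \times \tau$ se recollent correctement le long de leurs faces communes, afin que $E$ soit un complexe simplicial bien défini et que $|E|=|C|\times|D|$. C'est un résultat classique, mais c'est la seule étape véritablement délicate ; les deux inégalités en découlent alors directement via les énoncés déjà établis sur les polyèdres et sur le cube.
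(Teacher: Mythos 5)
Votre démonstration est correcte, mais pour la majoration elle suit une route réellement différente de celle du mémoire. La minoration coïncide avec ce que fait (implicitement) le texte : un $p$-simplexe fois un $q$-simplexe est un fermé de $P\times Q$ homéomorphe à $[0,1]^{p+q}$, et \ref{dimfermé} avec $\dim([0,1]^{p+q})=p+q$ conclut. En revanche, pour la majoration, le mémoire n'a pas besoin de trianguler le produit : il écrit $|C|\times|D|=\bigcup_{\Delta\in C,\,\Delta'\in D}\Delta\times\Delta'$, union \emph{finie} de fermés dont chacun est homéomorphe à un cube $[0,1]^{k+k'}$, puis applique \ref{union} (étendue par récurrence à une union finie) pour obtenir $\dim(|C|\times|D|)=\max_{\Delta,\Delta'}\dim(\Delta\times\Delta')=p+q$ — majoration et minoration tombent d'un seul coup. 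Vous, au contraire, montrez que $P\times Q$ est lui-même un polyèdre via la triangulation en escalier du produit, puis invoquez \ref{combi}. C'est mathématiquement valable : la triangulation en escalier est classique et produit bien un complexe de dimension combinatoire $\max_{\sigma,\tau}(\dim\sigma+\dim\tau)=p+q$ ayant pour support $|C|\times|D|$. Mais c'est précisément la machinerie que l'argument d'union rend inutile : l'étape que vous signalez vous-même comme délicate — le recollement compatible des triangulations des cellules $\sigma\times\tau$ — devrait être démontrée pour que la preuve soit complète, alors que l'argument du mémoire n'utilise que des résultats déjà établis dans le texte. En contrepartie, votre approche fournit une information structurelle plus forte que ne donne pas le mémoire, à savoir que le produit de deux polyèdres est encore un polyèdre.
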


\begin{proof}
On va montrer que si $C$ et $C'$ sont deux complexes simpliciaux, $dim(|C|\times |C'|)=dim(|C|)+dim(|C'|)$. Soient $\Delta$ un k-simplexe de $C$ et $\Delta '$ un k'-simplexe de $C'$. Alors $\Delta \times \Delta '$ est homéomorphe à $[0,1]^k\times [0,1]^{k'}$, donc est de dimension k+k'. Or : \[ |C|\times |C'|=\bigcup_{\Delta \in C, \Delta '\in C'}\Delta \times \Delta ' \]
Il s'agit d'une union finie d'ensembles fermés, donc d'après \ref{union}, $dim(|C|\times |C'|)=\max_{\Delta \in C, \Delta '\in C'}dim(\Delta \times \Delta ')$. Comme la dimension d'un complexe simplicial est égale à sa dimension combinatoire, $dim(|C|\times |C'|)=dim(|C|)+dim(|C'|)$.
\end{proof}

\subsubsection{Applications $\alpha$-compatibles}

\begin{Def}
Soit une application $f:X\to Y$. Soit $\alpha$ un recouvrement ouvert fini de X. On dit que $f$ est $\alpha$-compatible s'il existe un recouvrement ouvert fini $\beta$ de Y, tel que le recouvrement $f^{-1}(\beta )$ raffine $\alpha$.
\end{Def}

\begin{propo}\label{compat}
Supposons que X est compact. Soit $\alpha$ un recouvrement ouvert fini de X.  Soit une application continue $f:X\to Y$ telle que pour tout $y\in Y$, il existe $U\in \alpha$ tel que $f^{-1}(y)\subset U$. Alors f est $\alpha$-compatible.
\end{propo}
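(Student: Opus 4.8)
Le plan est d'exhiber explicitement un recouvrement ouvert fini $\beta$ de $Y$ dont l'image réciproque raffine $\alpha$, la compacité de $X$ servant à transformer des fermés en images fermées. Écrivant $\alpha = (U_i)_{i\in I}$, je poserais pour chaque $i\in I$ :
\[ W_i = Y \setminus f(X \setminus U_i). \]
L'idée directrice est que $W_i$ est précisément l'ensemble des $y\in Y$ dont la fibre $f^{-1}(y)$ est contenue dans $U_i$ : ainsi l'hypothèse « pour tout $y$, il existe $U\in\alpha$ contenant $f^{-1}(y)$ » revient exactement à dire que les $W_i$ recouvrent $Y$. Il restera à vérifier trois points : que les $W_i$ sont ouverts, qu'ils recouvrent $Y$, et que leurs images réciproques sont chacune incluses dans le $U_i$ correspondant.

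D'abord l'ouverture de $W_i$. Comme $X\setminus U_i$ est fermé dans le compact $X$, il est compact, donc son image $f(X\setminus U_i)$ est compacte comme image continue d'un compact. En supposant $Y$ séparé — hypothèse implicite dans tout ce contexte d'espaces métriques — cette image compacte est fermée, et $W_i$ est donc ouvert. Ensuite, pour voir que $\beta=(W_i)_{i\in I}$ recouvre $Y$, je raisonnerais par l'absurde : si $y\in\bigcap_{i\in I} f(X\setminus U_i)$, alors pour chaque $i$ la fibre $f^{-1}(y)$ rencontrerait $X\setminus U_i$, donc ne serait contenue dans aucun $U_i$, ce qui contredit l'hypothèse. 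Ainsi $\bigcap_{i\in I} f(X\setminus U_i)=\emptyset$, c'est-à-dire $\bigcup_{i\in I} W_i = Y$ ; comme $I$ est fini, $\beta$ est bien un recouvrement ouvert fini de $Y$.

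Enfin, pour montrer que $f^{-1}(\beta)$ raffine $\alpha$, j'utiliserais que l'image réciproque du complémentaire est le complémentaire de l'image réciproque, et l'inclusion $f^{-1}(f(A))\supseteq A$ :
\[ f^{-1}(W_i) = X \setminus f^{-1}\big(f(X\setminus U_i)\big) \subset X \setminus (X\setminus U_i) = U_i. \]
Chaque élément $f^{-1}(W_i)$ de $f^{-1}(\beta)$ est donc contenu dans l'élément $U_i$ de $\alpha$, ce qui établit $f^{-1}(\beta)\succ\alpha$ et donc la $\alpha$-compatibilité de $f$.

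Le point véritablement délicat n'est pas le calcul final mais l'ouverture des $W_i$ : elle repose de façon essentielle à la fois sur la compacité de $X$ (qui rend $f(X\setminus U_i)$ compact) et sur une propriété de séparation de $Y$ (qui rend ce compact fermé). Cette dernière hypothèse n'est pas superflue : pour $Y$ non séparé — par exemple l'espace de Sierpiński — on peut fabriquer un $f$ continu sur un $X$ compact vérifiant l'hypothèse sur les fibres mais qui n'est pas $\alpha$-compatible, car aucun ouvert contenant le point non fermé ne sépare celui-ci de l'image $f(X\setminus U_i)$. Je veillerais donc à expliciter que $Y$ est supposé séparé, la compacité de $X$ étant alors le seul ingrédient topologique restant à mobiliser.
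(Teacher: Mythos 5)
Ta preuve est correcte, et ton recouvrement est en fait exactement celui du mémoire : l'ensemble $W_i = Y \setminus f(X\setminus U_i)$ coïncide, comme tu le notes toi-même, avec $B_i=\{y\in Y \mid f^{-1}(y)\subset U_i\}$, qui est le recouvrement $\beta$ posé dans le texte, et les vérifications du recouvrement de $Y$ et du raffinement $f^{-1}(\beta)\succ\alpha$ sont les mêmes de part et d'autre. La seule divergence réelle porte sur le point clé, l'ouverture des $B_i=W_i$. Le mémoire procède par l'absurde avec des suites : une suite $(y_n)$ de $Y\setminus B_i$ convergeant vers $y\in B_i$, des points $x_n\in f^{-1}(y_n)\cap(X\setminus A_i)$, une sous-suite convergente par compacité de $X$, puis une contradiction via la continuité de $f$ et la fermeture de $X\setminus A_i$. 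Cet argument séquentiel présuppose implicitement que $Y$ est métrisable (ou au moins à bases dénombrables de voisinages) et que $X$ est séquentiellement compact — ce qui est bien le cadre d'application du mémoire, où tout est métrique compact, mais n'apparaît ni dans l'énoncé ni dans la convention du chapitre ($X$ y est seulement un espace topologique). Ton argument — l'image continue du compact $X\setminus U_i$ est compacte, donc fermée dans $Y$ séparé — est plus direct et strictement plus général : il vaut pour $X$ compact quelconque et $Y$ séparé quelconque, sans aucune extraction de suites. Tu as aussi raison de souligner que l'hypothèse de séparation sur $Y$ n'est pas gratuite : ton contre-exemple de Sierpiński fonctionne (tout recouvrement ouvert de cet espace doit contenir $Y$ tout entier, dont l'image réciproque est $X$, qui ne raffine rien), et la preuve séquentielle du mémoire présuppose tout autant cette séparation, implicitement, via l'unicité des limites. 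Ta rédaction rend donc explicite une hypothèse que le texte laisse tacite, et corrige au passage ce léger flou de généralité.
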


\begin{proof}
Si $\alpha=(A_i)_{i\in I}$, posons : $\beta=(B_i)_{i\in I}=(\{y\in Y | f^{-1}(y)\subset A_i \})_{i\in I}$. Alors $\beta$ est un recouvrement fini de Y tel que $f^{-1}(\beta ) \succ\alpha$. Il reste à montrer que les $B_i$ sont ouverts. Si un ensemble $B_i$ n'est pas ouvert, alors il existe $y\in B_i$ et une suite $(y_n)_{n\in \mathbb{N}}$ d'éléments de $Y\backslash B_i$, convergeant vers y. On a alors :
\[ \forall n\in \mathbb{N}, \exists x_n \in  f^{-1}(y_n)\cap (X\backslash A_i) \]
Comme X est compact, la suite $(x_n)_{n\in \mathbb{N}}$ admet une sous-suite convergeant vers un élément $x\in X$. Par continuité de $f$, $f(x)=y$, donc $x\in A_i$. Mais d'autre part, comme $X\backslash A_i$ est fermé, $x\in X\backslash A_i$. C'est une contradiction, donc finalement les $B_i$ sont des ouverts de Y.
\end{proof}

\begin{propo}\label{reciproque}
Soit une application $f:X\to Y$. Si $\beta$ est un recouvrement ouvert fini de X, alors $D(f^{-1}(\beta))\leq D(\beta)$.
\end{propo}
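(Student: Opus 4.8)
Le plan repose sur l'idée de tirer en arrière, par $f$, un raffinement optimal du recouvrement de départ. Ici $\beta$ doit être un recouvrement ouvert fini de $Y$ (c'est la seule lecture cohérente avec l'écriture $f^{-1}(\beta)$, puisque $f$ va de $X$ dans $Y$), et on suppose $f$ continue, ce qui garantit que $f^{-1}(\beta)$ est bien un recouvrement ouvert fini de $X$, de sorte que $D(f^{-1}(\beta))$ ait un sens. Je commencerais par invoquer la définition de $D(\beta)$ pour fixer un recouvrement ouvert fini $\gamma$ de $Y$ vérifiant $\gamma \succ \beta$ et $ord(\gamma) = D(\beta)$.

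J'examinerais alors le recouvrement $f^{-1}(\gamma) = (f^{-1}(W))_{W\in\gamma}$, indexé par $\gamma$. Par continuité de $f$, c'est un recouvrement ouvert fini de $X$. Il raffine $f^{-1}(\beta)$ : si $W\in\gamma$, il existe $B\in\beta$ avec $W\subset B$, d'où $f^{-1}(W)\subset f^{-1}(B)$, donc tout élément de $f^{-1}(\gamma)$ est contenu dans un élément de $f^{-1}(\beta)$. Le point clé est la comparaison des ordres : pour tout $x\in X$,
\[ ord_x(f^{-1}(\gamma)) = \sum_{W\in\gamma}\mathds{1}_{f^{-1}(W)}(x) - 1 = \sum_{W\in\gamma}\mathds{1}_W(f(x)) - 1 = ord_{f(x)}(\gamma), \]
car $x\in f^{-1}(W)$ équivaut à $f(x)\in W$. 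On en tire $ord_x(f^{-1}(\gamma))\leq ord(\gamma)$ pour tout $x$, puis $ord(f^{-1}(\gamma))\leq ord(\gamma) = D(\beta)$.

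Il ne resterait plus qu'à conclure : $f^{-1}(\gamma)$ est un recouvrement ouvert fini de $X$ raffinant $f^{-1}(\beta)$ et d'ordre au plus $D(\beta)$, donc, $D$ étant défini comme le minimum de l'ordre sur tous les raffinements ouverts finis, $D(f^{-1}(\beta))\leq ord(f^{-1}(\gamma))\leq D(\beta)$. La seule étape demandant réellement une vérification est l'identité $ord_x(f^{-1}(\gamma)) = ord_{f(x)}(\gamma)$, et c'est aussi là que réside la subtilité : il faut indexer $f^{-1}(\gamma)$ par $\gamma$ (et non par ses éléments distincts, pour ne pas fusionner des préimages égales) afin que le comptage des indices soit exact ; l'hypothèse implicite de continuité de $f$ est quant à elle indispensable pour que les objets manipulés restent des recouvrements ouverts.
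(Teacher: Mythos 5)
Votre preuve est correcte et suit essentiellement la même démarche que celle du mémoire : on fixe $\gamma \succ \beta$ avec $ord(\gamma)=D(\beta)$, on observe que $f^{-1}(\gamma)$ raffine $f^{-1}(\beta)$ et que $ord_x(f^{-1}(\gamma))=ord_{f(x)}(\gamma)\leq ord(\gamma)$ via les fonctions indicatrices, puis on conclut par définition de $D$. Vos précisions en passant (lire $\beta$ comme recouvrement de $Y$ et non de $X$, supposer $f$ continue, indexer $f^{-1}(\gamma)$ par $\gamma$ pour ne pas fusionner des préimages égales) corrigent d'ailleurs utilement des imprécisions de l'énoncé et de la rédaction originale.
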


\begin{proof}
Soit $\gamma$ un recouvrement ouvert fini tel que $\gamma \succ \beta$ et $ord(\gamma)=D(\beta)$. Alors on a $ord(f^{-1}(\gamma))\succ ord(\gamma)$. En effet, si $A\in \gamma$ et $x\in f^{-1}(A)$ alors $f(x)\in A$ donc :
\[ \sum_{A\in \alpha} (\mathds{1}_{f^{-1}(A)}(x)) -1\leq \sum_{A\in \alpha} (\mathds{1}_A(f(x)) -1\]
Finalement, comme $f^{-1}(\gamma)\succ f^{-1}(\beta)$, on a $D(f^{-1}(\beta))\leq D(\beta)$.
\end{proof}

\begin{propo}
Soit $\alpha$ un recouvrement ouvert fini de X et $f:X\to Y$ une application $\alpha$-compatible. Alors $D(\alpha)\leq dim(Y)$.
\end{propo}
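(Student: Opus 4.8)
Le plan est purement combinatoire et consiste à enchaîner trois résultats déjà établis, sans aucune construction nouvelle. La $\alpha$-compatibilité de $f$ fournit exactement l'ingrédient dont on a besoin : par définition, il existe un recouvrement ouvert fini $\beta$ de $Y$ tel que $f^{-1}(\beta)\succ\alpha$. C'est ce $\beta$ qui va servir de pont entre la combinatoire sur $X$ et la dimension de $Y$.

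Je partirais de la monotonie de $D$ vis-à-vis du raffinement (point 3 de la première remarque) : puisque $f^{-1}(\beta)$ raffine $\alpha$, on a $D(\alpha)\leq D(f^{-1}(\beta))$. J'appliquerais ensuite \ref{reciproque} au recouvrement $\beta$ de $Y$, ce qui donne $D(f^{-1}(\beta))\leq D(\beta)$. Enfin, comme $\beta$ est un recouvrement ouvert fini de $Y$ et que $dim(Y)$ est par définition le supremum des $D(\gamma)$ sur de tels recouvrements, on a $D(\beta)\leq dim(Y)$. En composant ces trois inégalités,
\[ D(\alpha)\leq D(f^{-1}(\beta))\leq D(\beta)\leq dim(Y),\]
on obtient exactement l'inégalité cherchée.

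Il n'y a à vrai dire pas d'obstacle sérieux ici : toute la difficulté technique a été absorbée en amont, d'une part dans \ref{reciproque} (le comportement de $D$ par image réciproque, qui encode la continuité de $f$), d'autre part dans la remarque de monotonie. Le seul point auquel je ferais attention est d'utiliser correctement que $\beta$ recouvre $Y$ tout entier — et non pas seulement $f(X)$ — ce qui est précisément garanti par la définition d'application $\alpha$-compatible, afin que la dernière inégalité $D(\beta)\leq dim(Y)$ soit légitime via le supremum définissant $dim(Y)$.
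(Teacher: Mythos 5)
Votre preuve est correcte et suit essentiellement le même chemin que celle du mémoire : on y extrait $\beta$ de la $\alpha$-compatibilité, puis on enchaîne $D(\alpha)\leq D(f^{-1}(\beta))\leq D(\beta)\leq dim(Y)$ en utilisant la monotonie de $D$ et \ref{reciproque}. Votre remarque finale sur le fait que $\beta$ recouvre $Y$ tout entier (et pas seulement $f(X)$) est un point de vigilance pertinent, implicite dans le texte original.
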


\begin{proof}
$f$ est $\alpha$-compatible donc il existe $\beta$ un recouvrement ouvert fini de Y tel que $f^{-1}(\beta)\succ \alpha$. Alors d'après la proposition précédente, $D(\alpha)\leq D(f^{-1}(\beta))\leq D(\beta)\leq dim(Y)$.
\end{proof}

\begin{propo}\label{reci}
Supposons que X soit un espace métrique compact, et $\alpha$ un recouvrement ouvert fini de X. Alors il existe un espace Y de dimension topologique $D(\alpha)$ et une application $f:X\to Y$ continue et $\alpha$-compatible.
\end{propo}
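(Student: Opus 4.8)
Le plan est de prendre pour $Y$ la réalisation géométrique du nerf d'un raffinement bien choisi de $\alpha$. Plus précisément, je commencerais par fixer un recouvrement ouvert fini $\beta=(U_i)_{i\in I}$ de $X$ tel que $\beta \succ \alpha$ et $ord(\beta)=D(\alpha)$. Soit $N$ le nerf de $\beta$ au sens de \ref{nerf}, et $Y=|N|$ le support de sa réalisation géométrique. La première étape consiste à vérifier que $dim(Y)=D(\alpha)$. Pour cela j'observerais que la dimension combinatoire de $N$ vaut exactement $ord(\beta)$ : une partie $J\subset I$ est un simplexe de $N$ si et seulement si $\bigcap_{j\in J}U_j\neq \emptyset$, et en prenant un point $x$ dans une telle intersection on obtient $card\{ i\in I \mid x\in U_i\}\geq card(J)$ ; réciproquement, pour tout $x$ l'ensemble $\{ i \mid x\in U_i\}$ est un simplexe de $N$. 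La dimension combinatoire maximale est donc $\max_x (card\{ i \mid x\in U_i\}-1)=ord(\beta)=D(\alpha)$, et \ref{dimpolyedre} donne $dim(Y)=D(\alpha)$.

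Ensuite je construirais l'application $f$. Comme $X$ est un espace métrique compact, il est normal, donc on dispose d'une partition de l'unité $(\rho_i)_{i\in I}$ subordonnée à $\beta$, c'est-à-dire vérifiant $\{ \rho_i>0\}\subset U_i$ pour tout $i$ et $\sum_{i\in I} \rho_i=1$. En identifiant les sommets de $N$ aux vecteurs $(e_i)_{i\in I}$ de la base canonique de $\mathbb{R}^{card(I)}$, je poserais :
\[ f(x)=\sum_{i\in I}\rho_i(x)e_i.\]
L'étape clef est de vérifier que $f(x)\in Y$ : pour un $x$ donné, l'ensemble $\{ i \mid \rho_i(x)>0\}$ est inclus dans $\{ i \mid x\in U_i\}$, qui est un simplexe de $N$ ; ainsi $f(x)$ est une combinaison convexe de sommets d'un même simplexe, donc appartient à $|N|$. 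La continuité de $f$ découle de celle des $\rho_i$.

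Il resterait à établir la $\alpha$-compatibilité. Je prendrais pour recouvrement de $Y$ la famille $\gamma=(E_N(e_i))_{i\in I}$ des étoiles ouvertes, qui est bien un recouvrement ouvert fini de $Y$ d'après \ref{étoile}. Le point à justifier est l'égalité $f^{-1}(E_N(e_i))=\{ \rho_i>0\}$ : un point de $|N|$ appartient à l'étoile ouverte de $e_i$ si et seulement si sa $i$-ème coordonnée barycentrique est strictement positive, et cette coordonnée vaut précisément $\rho_i(x)$ puisque les $e_i$ sont affinement indépendants. On a alors $f^{-1}(E_N(e_i))=\{ \rho_i>0\}\subset U_i$, et comme $\beta \succ \alpha$, chaque $U_i$ est inclus dans un élément de $\alpha$. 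Donc $f^{-1}(\gamma)\succ \alpha$, ce qui montre que $f$ est $\alpha$-compatible.

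Le principal obstacle me semble être le double contrôle du nerf : d'une part vérifier que $f$ est bien à valeurs dans $|N|$ (ce qui repose sur le fait que le support des $\rho_i$ n'active que des simplexes de $N$), et d'autre part identifier les images réciproques des étoiles ouvertes aux ensembles $\{ \rho_i>0\}$ via les coordonnées barycentriques. C'est l'articulation entre la structure combinatoire du nerf et la partition de l'unité qui porte tout l'argument, le calcul de $dim(Y)$ et la $\alpha$-compatibilité en découlant ensuite de façon directe grâce à \ref{dimpolyedre} et \ref{étoile}.
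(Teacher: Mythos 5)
Votre construction est exactement celle du mémoire : même raffinement $\beta$ avec $ord(\beta)=D(\alpha)$, même nerf, même partition de l'unité et même application $f(x)=\sum_{i}\rho_i(x)e_i$, le calcul $dim(Y)=D(\alpha)$ passant comme dans le texte par \ref{dimpolyedre}. La seule divergence réelle est la vérification de la $\alpha$-compatibilité : le mémoire montre que chaque fibre $f^{-1}(y)$ est contenue dans un élément $B_i$ de $\beta$ (via le simplexe de dimension minimale portant $y$) puis invoque \ref{compat}, dont la démonstration repose sur la compacité de $X$ pour établir que les ensembles $\{ y \mid f^{-1}(y)\subset B_i\}$ sont ouverts ; vous court-circuitez ce détour en exhibant directement le recouvrement de $Y$ par les étoiles ouvertes $(E_N(e_i))_{i\in I}$ fourni par \ref{étoile}, et en identifiant $f^{-1}(E_N(e_i))=\{ \rho_i>0\} \subset U_i$ grâce aux coordonnées barycentriques — identification correcte, puisque le porteur d'un point de $|N|$ a pour sommets exactement les $e_i$ de coordonnée strictement positive. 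Votre variante est plus économe : elle rend la compatibilité purement formelle (aucun argument de compacité n'y intervient, la normalité de $X$ suffisant pour la partition de l'unité), et elle explicite au passage deux points que le mémoire laisse implicites, à savoir que la dimension combinatoire du nerf vaut $ord(\beta)$ et que $f$ est bien à valeurs dans $|N|$. L'approche du mémoire a en revanche l'avantage de réutiliser le critère général \ref{compat}, déjà établi, sans manipuler les coordonnées barycentriques de la réalisation géométrique.
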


On peut construire l'espace Y comme un polyèdre.

\begin{proof}
Soit $\beta=(B_i)_{i\in I}$ un recouvrement ouvert fini de X qui raffine $\alpha$ et tel que $ord(\beta)=D(\alpha)$. Soit $C$ la réalisation géométrique du nerf de $\beta$ et Y le polyèdre $|C|$. On garde les mêmes notations que dans \ref{nerf}. Soit $(\rho_i)_{i\in I}$ une partition de l'unité subordonnée à $\beta$. Alors on définit :
\begin{align*}
f:\: &X\longrightarrow Y&\\
&x\longmapsto \sum_{i\in I} \rho_i(x) e_i&
\end{align*}
$f$ est bien continue, et $dim(Y)$ est la dimension combinatoire de $C$ d'après \ref{dimpolyedre}, donc c'est $ord(\beta)$, c'est-à-dire $D(\alpha)$.

Enfin, f est $\beta$-compatible d'après \ref{compat}. En effet, soit $y\in Y$ et $\Delta \subset I$ le simplexe de $C$ contenant y de dimension minimale (il est unique). Alors si $i\in \Delta$, et $f(x)=y$, $\rho_i(x)\neq 0$ donc $x\in B_i$. Donc $f^{-1}(y)\subset B_i$. Finalement, comme $\beta \succ \alpha$, f est $\alpha$-compatible.
\end{proof}

\begin{coro}\label{produit}
Soient X et Y deux compacts métrisables. Alors :
\[ dim(X\times Y)\leq dim(X)+dim(Y)\]
\end{coro}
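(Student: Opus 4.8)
Le plan est de borner $D(\omega)$ pour un recouvrement ouvert fini quelconque $\omega$ de $X\times Y$, puis de conclure par passage au supremum, puisque $\dim(X\times Y)=\sup_\omega D(\omega)$. L'idée directrice est de fabriquer une application continue $F\colon X\times Y\to Z$ qui soit $\omega$-compatible, où $Z$ est un produit de polyèdres de dimension au plus $\dim(X)+\dim(Y)$. La proposition affirmant qu'une application $\alpha$-compatible $F\colon W\to Z$ vérifie $D(\alpha)\leq\dim(Z)$ donnera alors directement $D(\omega)\leq\dim(Z)\leq\dim(X)+\dim(Y)$.

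Je munis $X$ et $Y$ de distances et $X\times Y$ de la distance produit $d\big((x,y),(x',y')\big)=\max\big(d_X(x,x'),d_Y(y,y')\big)$, de sorte que $X\times Y$ soit compact métrique. Soit $\delta>0$ un nombre de Lebesgue du recouvrement $\omega$ (toute partie de diamètre $<\delta$ est contenue dans un élément de $\omega$, résultat classique de compacité). Par compacité, je choisis un recouvrement ouvert fini $\alpha$ de $X$ de maille $<\delta$ (recouvrement extrait d'un recouvrement par des boules de rayon assez petit), et de même un recouvrement $\alpha'$ de $Y$. J'applique alors \ref{reci} à $(X,\alpha)$ : j'obtiens un polyèdre $P$ avec $\dim(P)=D(\alpha)\leq\dim(X)$ et une application continue $f\colon X\to P$. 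Le point crucial, déjà établi dans la preuve de \ref{reci}, est que chaque fibre $f^{-1}(p)$ est contenue dans un élément du raffinement $\beta\succ\alpha$ utilisé ; on a donc $diam(f^{-1}(p))\leq maille(\beta)\leq maille(\alpha)<\delta$. De même j'obtiens $g\colon Y\to Q$ avec $\dim(Q)\leq\dim(Y)$ et des fibres de diamètre $<\delta$. Je pose $F=f\times g\colon X\times Y\to P\times Q$, qui est continue.

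Pour conclure, je vérifie que $F$ est $\omega$-compatible grâce à \ref{compat}, appliquée à l'espace compact $X\times Y$ : il suffit de montrer que pour tout $(p,q)\in P\times Q$, la fibre $F^{-1}(p,q)=f^{-1}(p)\times g^{-1}(q)$ est contenue dans un élément de $\omega$. Or, pour la distance produit, $diam\big(f^{-1}(p)\times g^{-1}(q)\big)=\max\big(diam(f^{-1}(p)),diam(g^{-1}(q))\big)<\delta$, donc par définition du nombre de Lebesgue cette fibre est incluse dans un élément de $\omega$ (l'inclusion étant triviale si la fibre est vide). Ainsi $F$ est $\omega$-compatible, d'où $D(\omega)\leq\dim(P\times Q)$. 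Comme $P$ et $Q$ sont des polyèdres, le corollaire sur la dimension d'un produit de polyèdres donne $\dim(P\times Q)=\dim(P)+\dim(Q)\leq\dim(X)+\dim(Y)$. Ceci valant pour tout recouvrement ouvert fini $\omega$, on obtient $\dim(X\times Y)\leq\dim(X)+\dim(Y)$.

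La principale difficulté est de garantir que chaque fibre de $F$ tombe dans un seul élément de $\omega$. Une tentative naïve consisterait à relever séparément un recouvrement de $P$ et un recouvrement de $Q$ puis à prendre leur produit sur $P\times Q$ ; mais le produit de deux recouvrements fait apparaître tous les pavés croisés $A_i\times B_j$, et rien n'assure qu'un tel pavé soit inclus dans un élément de $\omega$. C'est précisément pour contourner cet obstacle que l'on contrôle le diamètre des fibres de $f$ et de $g$ via la maille et que l'on invoque le nombre de Lebesgue de $\omega$ : on remplace ainsi une condition combinatoire délicate (compatibilité des recouvrements produits) par une condition métrique maniable (petitesse des fibres).
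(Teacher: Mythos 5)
Votre preuve est correcte, et elle partage le squelette de celle du mémoire : dans les deux cas, on applique \ref{reci} pour produire $f:X\to P$ et $g:Y\to Q$ à valeurs dans des polyèdres de dimensions $D(\alpha)\leq \dim(X)$ et $D(\alpha')\leq \dim(Y)$, on vérifie que l'application produit est compatible avec le recouvrement donné de $X\times Y$, et on conclut par $D(\omega)\leq \dim(P\times Q)=\dim(P)+\dim(Q)$ grâce au corollaire sur la dimension d'un produit de polyèdres. La différence réelle tient à la vérification de la compatibilité. Le mémoire procède combinatoirement : il affirme qu'on peut trouver des recouvrements ouverts finis $\alpha$ de $X$ et $\beta$ de $Y$ tels que le produit $\alpha\times\beta$ raffine le recouvrement $\gamma$ de $X\times Y$, puis note que $h=f\times g$ est $(\alpha\times\beta)$-compatible donc $\gamma$-compatible. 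Or la justification donnée (chaque élément de $\gamma$ est réunion finie de pavés élémentaires) ne suffit pas telle quelle : la famille des pavés apparaissant dans ces décompositions raffine bien $\gamma$, mais le produit complet $\alpha\times\beta$ fait apparaître tous les pavés croisés $U\times V'$, dont rien ne garantit qu'ils tombent dans un élément de $\gamma$ --- c'est exactement l'obstacle que vous signalez dans votre dernier paragraphe, et le combler demande un argument de type lemme du tube ou nombre de Lebesgue, que le mémoire omet. Votre variante métrique (nombre de Lebesgue $\delta$ de $\omega$, recouvrements de maille $<\delta$, fibres de $f$ et de $g$ de diamètre $<\delta$ car contenues dans des éléments des raffinements $\beta\succ\alpha$ de maille au plus celle de $\alpha$, puis \ref{compat} appliquée à $F=f\times g$) lève proprement cette difficulté et rend l'argument complet. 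Le prix en est modeste : vous exploitez une propriété interne à la démonstration de \ref{reci} (toute fibre de $f$ est contenue dans un élément du raffinement $\beta$) et non son seul énoncé, alors que le mémoire n'utilise \ref{reci} qu'en boîte noire --- au prix, justement, du point délicat laissé implicite. Sur ce point précis, votre rédaction est la plus solide des deux.
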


\begin{proof}
Soit $\gamma$ un recouvrement ouvert fini de $X\times Y$. Alors par définition de la topologie produit sur $X\times Y$, chaque élément de $\gamma$ peut s'écrire comme une réunion (finie car $X\times Y$ est compact) d'ouverts élémentaires $U\times V$. Donc on peut trouver deux recouvrements ouverts finis $\alpha$ et $\beta$ respectivement de X et de Y, tels que $(\alpha \times \beta)=(U\times V)_{U\in \alpha, V\in \beta} $ raffine $\gamma$.

D'après \ref{reci}, on peut trouver P un polyèdre de dimension $D(\alpha)$, Q un polyèdre de dimension $D(\beta)$, et deux applications continues, $f:X\to P$, $\alpha$-compatible, et $g:Y\to Q$, $\beta$-compatible. Posons :
\begin{align*}
h:&X\times Y \longrightarrow P\times Q&\\
&(x,y) \longmapsto (f(x),g(y))&
\end{align*}

Alors h est $(\alpha \times \beta)$-compatible donc $\gamma$-compatible. Donc $D(\gamma)\leq dim(P\times Q)$.

\begin{Def}
Soient $\alpha=(A_i)_{i\in I}$ et $\beta=(B_j)_{j\in J}$ des recouvrements de X. On note $\alpha \vee \beta $ et on appelle joint de $\alpha$ et $\beta$ le recouvrement : $(A_i \cap B_j)_{(i\in I, j\in J)}$.
\end{Def}

\begin{Rem}
On a $\alpha \vee \beta \succ \alpha$ et $\alpha \vee \beta \succ \beta$
\end{Rem}

\begin{propo}\label{joint}
Soient $\alpha$ et $\beta$ des recouvrements de X. Alors :
\[ D(\alpha \vee \beta)\leq D(\alpha)+D(\beta) \]
\end{propo}

\begin{proof}
D'après \ref{reci}, on peut trouver Y un polyèdre de dimension $D(\alpha)$, Y' un polyèdre de dimension $D(\beta)$, et deux applications continues, $f:X\to Y$, $\alpha$-compatible, et $g:X\to Y'$, $\beta$-compatible. Posons :
\begin{align*}
h:&X \longrightarrow Y\times Y'&\\
&x \longmapsto ((f(x),g(x))&
\end{align*}
Alors h est $(\alpha \vee \beta)$-compatible. Donc $D(\alpha \vee \beta)\leq dim ( Y\times Y')$. Comme Y et Y' sont compacts, on obtient :
\[D(\alpha \vee \beta)\leq dim(Y)+dim(Y')= D(\alpha)+D(\beta)\]
\end{proof}

\end{proof}

\subsection{Plongements topologiques}

Soient X et Y deux espaces topologiques. On dit que X se plonge topologiquement dans Y s'il existe une application $f:X\to Y$ qui induit un homéomorphisme de X sur $f(X)$. $f$ est appelée un plongement de X dans Y.

La dimension topologique étant un invariant topologique, d'après \ref{dimfermé}, si X se plonge topologiquement dans Y, on a $dim(X)\leq dim(Y)$.

Dans l'idée d'étudier les plongements d'un système dynamique dans un autre, on étudie ici les plongements d'un espace topologique X dans un espace topologique Y. Ceci pour deux raisons : si (X,T) se plonge dans (Y,S), alors X se plonge topologique dans Y. De plus, dans la section 3., on cherche un équivalent pour les systèmes dynamiques pour \ref{Menger} ci-dessous.

\begin{propo}\label{plonge}
Soit X un espace métrisable compact. Alors X se plonge topologiquement dans $[0,1]^\mathbb{N}$.
\end{propo}

\begin{proof}
On va exhiber une suite $(f_n)_{n\in \mathbb{N}}$ de fonctions continues de X dans $[0,1]$ telles que :
\[\forall x\neq y, \exists n\in \mathbb{N}, f_n(x)\neq f_n(y)\]
Alors on pourra considérer l'application continue suivante :
\begin{align*}
\phi : &X \longrightarrow [0,1]^\mathbb{N}& \\
&x\longmapsto (f_n(x))_{n\in \mathbb{N}} &
\end{align*}
qui est injective, donc induit une application bijective de X sur $\phi (X)$, et dont la réciproque $\phi ^{-1}$ est continue car X est compact.

Soit $n\in \mathbb{N}$. X peut être recouvert par un nombre fini de boules fermées de rayon $\frac{1}{n}$. Pour chaque couple $(B_1,B_2)$ de telles boules, disjointes, la fonction :
\begin{align*}
&X \longrightarrow [0,1]& \\
&x\longmapsto \frac{d(x,B_1)}{d(x,B_1)+d(x,B_2)} &
\end{align*}
est continue et vaut 0 sur la boule $B_1$, et 1 sur la boule $B_2$.
On construit ainsi, pour tout $n\in \mathbb{N}$, un nombre fini de fonctions.

Considérons l'ensemble de ces fonctions. C'est un ensemble dénombrable. Soient x et y distincts dans X. Alors il existe $n\in \mathbb{N}$ tel que x et y appartiennent à deux boules disjointes de rayon $\frac{1}{n}$ du recouvrement exhibé ci-dessus. La fonction associée prend des valeurs distinctes (0 et 1) en x et en y.

 \end{proof}

\begin{thm} \label{Menger}
Soit X un espace compact métrisable de dimension topologique finie n. Alors X se plonge topologiquement dans $\mathbb{R}^{2n+1}$.
\end{thm}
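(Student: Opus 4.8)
Le plan est de montrer que l'ensemble des applications injectives est dense dans l'espace $C(X,\mathbb{R}^{2n+1})$ des applications continues, muni de la distance de la convergence uniforme. Comme $X$ est compact, cet espace est complet, et une application continue injective d'un compact dans un séparé induit un homéomorphisme sur son image : une telle application est donc un plongement. Pour tout entier $k\geq 1$, je poserais
\[ U_k=\{ f\in C(X,\mathbb{R}^{2n+1}) : \forall x,y\in X,\ d(x,y)\geq 1/k \Rightarrow f(x)\neq f(y) \}. \]
Chaque $U_k$ est ouvert : par compacité de $\{(x,y):d(x,y)\geq 1/k\}$, pour $f\in U_k$ la borne inférieure de $|f(x)-f(y)|$ sur cet ensemble est atteinte, donc strictement positive, et toute application assez proche de $f$ reste dans $U_k$. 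De plus $\bigcap_{k\geq 1} U_k$ est exactement l'ensemble des applications injectives. Le théorème de Baire ramène ainsi toute la démonstration à la densité de chaque $U_k$.

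Pour la densité, étant donnés $f\in C(X,\mathbb{R}^{2n+1})$ et $\epsilon>0$ : comme $dim(X)\leq n$, \ref{maille} fournit un recouvrement ouvert fini $\alpha=(U_i)_{i\in I}$ de maille $<1/k$ et d'ordre $\leq n$, que l'on peut de plus supposer assez fin pour que $f$ varie de moins de $\epsilon$ sur chaque $U_i$. Soit $(\rho_i)_{i\in I}$ une partition de l'unité subordonnée à $\alpha$. Je choisirais ensuite, pour chaque $i$, un point $p_i\in\mathbb{R}^{2n+1}$ proche de la valeur de $f$ sur $U_i$, la famille $(p_i)_{i\in I}$ étant en position générale, au sens où toute sous-famille de cardinal $\leq 2n+2$ est affinement indépendante ; c'est une condition générique, donc réalisable en perturbant les $p_i$ arbitrairement peu. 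On pose alors
\[ g(x)=\sum_{i\in I}\rho_i(x)\,p_i, \]
application continue et, par le contrôle de la maille et le choix des $p_i$, proche de $f$ à $\epsilon$ près.

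Il reste à vérifier que $g\in U_k$, ce qui est le cœur de l'argument. Supposons $g(x)=g(y)$. L'ordre de $\alpha$ étant $\leq n$, l'ensemble $\{i:\rho_i(x)>0\}$ a au plus $n+1$ éléments, de même pour $y$ ; leur réunion compte donc au plus $2n+2$ indices. Les points $p_i$ correspondants, étant en position générale dans $\mathbb{R}^{2n+1}$, sont affinement indépendants. Or $g(x)=g(y)$ exprime l'égalité de deux combinaisons affines de ces points (les coefficients $\rho_i(x)$ et $\rho_i(y)$ sommant chacun à $1$) ; l'unicité des coordonnées barycentriques force $\rho_i(x)=\rho_i(y)$ pour tout $i$. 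En particulier les supports coïncident, donc il existe $i$ avec $\rho_i(x)>0$ et $\rho_i(y)>0$, d'où $x,y\in U_i$ et $d(x,y)<1/k$. Ainsi $g(x)=g(y)$ entraîne $d(x,y)<1/k$, c'est-à-dire $g\in U_k$.

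La principale difficulté est précisément cette dernière étape combinatoire : c'est la conjonction de la borne $\leq n+1$ sur le nombre de fonctions $\rho_i$ non nulles en un point (issue de $dim(X)\leq n$) et de la dimension $2n+1$ de l'espace d'arrivée (qui autorise $2n+2$ points affinement indépendants en position générale) qui rend l'argument valide, et qui explique l'apparition du $2n+1$. Les points plus routiniers, mais à soigner, seront les estimations quantitatives garantissant que $g$ est bien $\epsilon$-proche de $f$, l'existence de la partition de l'unité subordonnée à $\alpha$, et la justification de la densité de la position générale dans $(\mathbb{R}^{2n+1})^{I}$.
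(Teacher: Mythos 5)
Votre proposition est correcte et suit essentiellement la même démarche que le papier : théorème de Baire dans $C(X,\mathbb{R}^{2n+1})$, vos ouverts $U_k$ coïncidant avec les espaces $C_{1/k}(X,\mathbb{R}^{2n+1})$ d'applications $\epsilon$-injectives du texte, et densité obtenue via un recouvrement de maille petite et d'ordre $\leq n$, une partition de l'unité et des points en position générale. Seule micro-variante : vous concluez l'étape combinatoire par l'unicité des coordonnées barycentriques (donnant $\rho_i(x)=\rho_i(y)$), là où le papier raisonne par l'absurde sur le cardinal $card(I_x\cup I_y)\leq 2n+2\leq m+1$ — c'est le même argument, votre rédaction étant même un peu plus propre.
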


On montre ce théorème en utilisant le théorème de Baire. On démontre le résultat plus fort suivant :

\begin{thm}\label{Menger2}
Soit X un espace compact métrisable de dimension topologique finie n. Soit m un entier tel que $m\geq 2n+1$. Alors l'ensemble des applications de $C(X,\mathbb{R} ^m)$ qui induisent un homéomorphisme de X sur son image est un $G_\delta$-dense dans $C(X,\mathbb{R} ^m)$.
\end{thm}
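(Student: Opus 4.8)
The plan is to prove Theorem~\ref{Menger2} via a Baire category argument in the complete metric space $C(X,\mathbb{R}^m)$ (with the uniform metric), which is the standard and natural approach given the hint. The key observation is that the set of embeddings can be written as a countable intersection of open dense sets. First I would note that a continuous map $f:X\to\mathbb{R}^m$ from a compact space induces a homeomorphism onto its image if and only if $f$ is injective. So the goal reduces to showing that the set of injective maps is a $G_\delta$-dense. To obtain the $G_\delta$ structure, for each $k\in\mathbb{N}^*$ I would define
\[
E_k=\Bigl\{f\in C(X,\mathbb{R}^m)\ \Bigm|\ \forall x,y\in X,\ d(x,y)\geq \tfrac{1}{k}\Rightarrow \|f(x)-f(y)\|>0\Bigr\}.
\]
The intersection $\bigcap_k E_k$ is exactly the set of injective maps, since $f$ is injective iff for every pair $x\neq y$ there is some $k$ with $d(x,y)\geq 1/k$ and $f(x)\neq f(y)$.

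\textbf{Openness of $E_k$.} I would show each $E_k$ is open in $C(X,\mathbb{R}^m)$. For $f\in E_k$, compactness of the set $\{(x,y): d(x,y)\geq 1/k\}$ (a closed subset of $X\times X$, hence compact) together with continuity of $(x,y)\mapsto\|f(x)-f(y)\|$ gives a uniform lower bound $\delta>0$ on $\|f(x)-f(y)\|$ over this set. Any $g$ with $\|g-f\|_\infty<\delta/2$ then still separates such pairs, so $g\in E_k$; thus $E_k$ is open.

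\textbf{Density of $E_k$ — the main obstacle.} This is the heart of the proof and where the dimension hypothesis $m\geq 2n+1$ enters. Given $f\in C(X,\mathbb{R}^m)$ and $\epsilon>0$, I must find $g\in E_k$ with $\|g-f\|_\infty<\epsilon$. The strategy is to use Proposition~\ref{reci} and the combinatorial dimension of polyhedra. Using \ref{maille} applied to $X$ of dimension $n$, I would choose a finite open cover $\alpha$ of $X$ of order $\leq n$ and small mesh (so that nearby points of $X$ stay together and diam of each element is small enough to control the perturbation of $f$). By Proposition~\ref{reci} there is a polyhedron $P$ of dimension $D(\alpha)\leq n$ and an $\alpha$-compatible map; more directly, I would factor a small perturbation of $f$ through the nerf of $\alpha$, whose geometric realization has combinatorial dimension $\leq n$. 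The vertices of this polyhedron map to points of $\mathbb{R}^m$, and I would perturb these finitely many vertex-images into \emph{general position} in $\mathbb{R}^m$: since $m\geq 2n+1$, two simplices of dimension $\leq n$ in general position in $\mathbb{R}^m$ have disjoint images (because $n+n<m$ forces the affine spans to miss generically), so the resulting piecewise-linear map on the nerf is injective on points coming from elements of $\alpha$ that are far apart. Combining the smallness of mesh with the general-position perturbation yields a map $g$ that is $\epsilon$-close to $f$ and separates all pairs with $d(x,y)\geq 1/k$, i.e. $g\in E_k$.

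\textbf{Conclusion.} Once each $E_k$ is shown open and dense, the Baire category theorem applies: $C(X,\mathbb{R}^m)$ is a complete metric space, so the countable intersection $\bigcap_{k\geq 1}E_k$ is a dense $G_\delta$. This set is precisely the set of injective continuous maps, which by compactness of $X$ coincides with the maps inducing a homeomorphism onto their image, proving Theorem~\ref{Menger2}. Theorem~\ref{Menger} then follows immediately by taking $m=2n+1$ and picking any element of this nonempty $G_\delta$. The delicate point I expect to spend the most care on is the general-position argument: quantifying the perturbation of the vertex-images so that simplices of dimension $\leq n$ become disjoint, and checking that the $2n+1$ bound is exactly what makes generic disjointness possible.
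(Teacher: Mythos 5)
Your proposal is correct and follows essentially the same route as the paper: your sets $E_k$ are precisely the paper's spaces $C_{\frac{1}{k}}(X,\mathbb{R}^m)$ of $\frac{1}{k}$-injectives, your openness argument via compactness of $\{(x,y)\mid d(x,y)\geq \frac{1}{k}\}$ is identical, and your density argument (a small-mesh cover of order $\leq n$ from \ref{maille}, a barycentric map whose vertex images are perturbed into general position) is exactly the paper's construction $g=\sum_i \rho_i q_i$. The only cosmetic differences are that the paper writes the partition-of-unity map directly instead of routing through the nerve and \ref{reci}, and it phrases the key injectivity step as affine independence of at most $m+1$ points (via $\mathrm{card}(I_x)+\mathrm{card}(I_y)\leq 2n+2\leq m+1$, forcing $I_x\cap I_y\neq\emptyset$) rather than your generic disjointness of affine spans — the same counting in different clothing.
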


Pour cela, on considèrera les espaces suivants :
Pour $\epsilon>0$, $C_\epsilon (X,\mathbb{R}^m)$ est l'espace des applications de  $C(X,\mathbb{R} ^m)$ qui sont $\epsilon$-injectives, c'est-à-dire telles que :
\[ \forall (x,y) \in X, f(x)=f(y) \Rightarrow d(x,y)<\epsilon\]
On va montrer qu'ils sont tous ouverts denses dans $C(X,\mathbb{R} ^m)$, qu'on a muni de la distance uniforme $d_\infty$.

\begin{Lemme}
Soit $\epsilon>0$ et $m\in \mathbb{N}$. Alors l'espace $C_\epsilon (X,\mathbb{R}^m)$ est ouvert dans $C(X,\mathbb{R} ^m)$.
\end{Lemme}

\begin{proof}
Soit $f\in C_\epsilon (X,\mathbb{R}^m)$. Posons :
\[ K= \{(x,y)\in X\times X \: |\: d(x,y)\geq \epsilon \} \]
Alors :
\[ \forall (x,y)\in K, d(f(y),f(x))>0 \]
Or K est fermé dans $X\times X$ donc compact. Donc il existe $\delta >0$ tel que :
\[ \forall (x,y)\in K, d(f(y),f(x))\geq \delta \]
Soit maintenant $g\in C(X,\mathbb{R} ^m)$ telle que $d_\infty(f,g)\leq \frac{\delta}{4}$. Alors pour tout $(x,y)\in K$,
\[ \delta \leq d(f(y),f(x))\leq d(f(y),g(y))+d(g(y),g(x))+d(g(x),f(x))\leq d(g(y),g(x))+\frac{\delta}{2} \]
Donc $g\in C_\epsilon (X,\mathbb{R}^m)$.
Finalement $C_\epsilon (X,\mathbb{R}^m)$ est ouvert dans $C (X,\mathbb{R}^m)$
\end{proof}

Pour montrer la densité des espaces $C_\epsilon (X,\mathbb{R}^m)$, nous aurons besoin d'un lemme intermédiaire.

On rappelle la définition suivante :
\begin{Def}
Soit $\{ v_1, v_2,..., v_r\}$ une famille de vecteurs de $\mathbb{R}^m$. Alors on dit qu'elle est affinement indépendante si $\{v_2-v_1,v_3-v_1,...,v_{r}-v_1\}$ est une famille libre.
\end{Def}

\begin{Def}
Soit $m\in \mathbb{N}$ et $r\in \mathbb{N}$. Soit $\{ p_0, p_1,...,p_r\} $ une famille de points de $\mathbb{R}^m$. On dit qu'elle est en position générale si toute sous-famille de $\{ p_0, p_1,...,p_r\} $  comprenant au plus m+1 points est affinement indépendante.
\end{Def}

\begin{Lemme}
Soient $m\in \mathbb{N}$ et $r\in \mathbb{N}$. Soient $\{ p_0, p_1,...,p_r\} $ une famille de points de $\mathbb{R}^m$ et $\epsilon>0$. Alors il existe $\{ q_0, q_1,...,q_r\} $ une famille de points de $\mathbb{R}^m$ en position générale telle que
\[ \forall 0\leq i\leq r, d(p_i,q_i)<\epsilon \]
\end{Lemme}

\begin{proof}
Montrons-le par récurrence sur r.
Un singleton est toujours en position générale, on peut prendre $q_0=p_0$.
Supposons $q_0, q_1,...,q_{r-1}$ déjà construits. Soit une sous-famille de $\{ q_0, q_1,...,q_{r-1}\}$ comprenant au plus m points. Alors elle engendre un sous-espace affine de $\mathbb{R}^m$ de dimension strictement inférieure à m. Cet espace est donc d'intérieur vide dans $\mathbb{R}^m$. Soit F l'union de tous les espaces de ce type. Alors F est une union finie de fermés d'intérieur vide, donc d'après le théorème de Baire, F est d'intérieur vide. On peut donc trouver un point $q_r$ dans son complémentaire tel que $d(q_r,p_r)\leq \epsilon$. Alors par définition de F, $\{ q_0, q_1,...,q_r\}$ est en position générale.
\end{proof}

\begin{Lemme}
Soit X un espace compact métrisable de dimension topologique finie n. Soit m un entier tel que $m\geq 2n+1$ et $\epsilon>0$. Alors l'espace $C_\epsilon (X,\mathbb{R}^m)$ est dense dans $C(X,\mathbb{R} ^m)$.
\end{Lemme}

\begin{proof}
Soit $f\in C(X,\mathbb{R}^m)$ et $\delta >0$. Comme X est compact, f est uniformément continue. Soit $\eta>0$ tel que si $d(x,y)\leq \eta$, alors $d(f(x),f(y))\leq \frac{\delta}{2}$. D'après \ref{maille}, on peut trouver un recouvrement ouvert fini $\alpha$ de X tel que $ord(\alpha)=n$ et $maille(\alpha)\leq min(\eta ,\epsilon)$. Notons $\alpha =\{ U_0, ...,U_r\}$ et choisissons pour tout $U_i$ un point $a_i\in U_i$. Alors en posant $p_i=f(a_i)$ on obtient $\{ p_0, p_1,...,p_r\} $ une famille de points de $\mathbb{R}^m$ à laquelle on peut appliquer le lemme précédent. On obtient  une famille de points $\{ q_0,q_1,..,q_r\}$ en position générale telle que
\[ \forall 0\leq i\leq r, d(p_i,q_i)<\frac{\delta}{2} \]
Soit $(\rho_i)_{i\in \{0,1...,r\}}$ une partition de l'unité subordonnée à $\alpha$. Alors on peut définir l'application :
\begin{align*}
g:&X\longrightarrow \mathbb{R}^m\\
&x\longmapsto \sum_{i=0}^r\rho_i(x)q_i
\end{align*}

g est une fonction continue. Montrons qu'elle est $\epsilon$-injective. Soient x et y tels que g(x)=g(y). Posons $I_x=\{ 0\leq i\leq r \: | \: \rho_i(x)>0\}$ et de même $I_y=\{ 0\leq i\leq r \: | \: \rho_i(y)>0\}$. On a :
\[ \sum_{i\in I_x}\rho_i(x)q_i-\sum_{i\in I_y}\rho_i(y)q_i =0\]
Comme les coefficients en jeu dans cette combinaison linéaire sont tous non nuls, la famille $(q_i)_{i\in I_x\cup I_y}$ est liée. Comme $\{ q_0,q_1,..,q_r\}$ est en position générale, on en déduit que $card(I_x\cup I_y)>m+1$. Or $card(I_x)+card(I_y)\leq 2ord(\alpha)+2\leq 2n+2\leq m+1$. On en déduit que $I_x$ et $I_y$ ont un élément i en commun. Alors x et y sont dans $U_i$. Comme $maille(\alpha)<\epsilon$, $d(x,y)<\epsilon$. Finalement, g est $\epsilon$-injective.

Montrons que $\Arrowvert f-g \Arrowvert _\infty \leq \delta$. Soit $x\in X$. Si $\rho_i(x)>0$, alors comme $diam(U_i)\leq \eta$, $d(f(x),p_i)\leq \frac{\delta}{2}$. Donc $d(f(x),\sum_{i=0}^r\rho_ip_i)\leq \frac{\delta}{2}$.
Alors on a :
\begin{align*}
d(f(x),g(x))&\leq \frac{\delta}{2} + d(\sum_{i=0}^r\rho_i(x)q_i,\sum_{i=0}^r\rho_i(x)p_i)\\
&\leq \frac{\delta}{2} + d(\sum_{i=0}^r\rho_i(x)(q_i-p_i),0)\\
&\leq \frac{\delta}{2} + \frac{\delta}{2}\times \sum_{i=0}^r\rho_i(x)\leq \delta
\end{align*}
Donc $\Arrowvert f-g \Arrowvert _\infty \leq \delta$.

Finalement, on a montré que $C_\epsilon (X,\mathbb{R}^m)$ est dense dans $C(X,\mathbb{R} ^m)$.
\end{proof}

Démontrons maintenant le théorème.

\begin{proof}
L'ensemble des applications de $C(X,\mathbb{R} ^m)$ qui induisent un homéomorphisme de X sur son image s'écrit de la façon suivante :
\[ \bigcap_{k=1}^\infty C_{\frac{1}{k}} (X,\mathbb{R}^m)  \]
D'après les lemmes précédents, c'est donc une intersection dénombrable d'ouverts denses de l'espace métrique complet $C(X,\mathbb{R}^m)$. On peut donc appliquer le théorème de Baire pour conclure.
\end{proof}

\section{Moyenne dimension topologique}

La moyenne dimension topologique est un invariant topologique, introduit par Gromov, et qui permet de distinguer entre des systèmes de dimension infinie.

Dans toute cette section, X est un espace métrique compact et T un homéomorphisme sur X.

\subsection{Généralités}

\begin{Notation}
Soit $\alpha$ un recouvrement ouvert de X. Soient a et b dans $\mathbb{Z}$ tels que a<b. On note :
\[ \alpha_a^b=T^{-a}(\alpha)\vee T^{-a-1}(\alpha)\vee ... \vee T^{-b}(\alpha)\]
$\alpha_a^b$ est un recouvrement ouvert de X.
\end{Notation}

\begin{propo}
Pour tout recouvrement ouvert fini $\alpha$ de X, la suite $(D(\alpha_0^{n-1}))_{n\in \mathbb{N}}$ est sous-additive.
\end{propo}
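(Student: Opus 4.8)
Il s'agit de montrer que pour tous $m,n\in\mathbb{N}$ on a $D(\alpha_0^{m+n-1})\leq D(\alpha_0^{m-1})+D(\alpha_0^{n-1})$, ce qui est exactement la sous-additivité de la suite $(D(\alpha_0^{n-1}))_n$. Le plan est d'exploiter la structure du recouvrement $\alpha_0^{m+n-1}$ sous l'action de $T$. Je commencerais par la décomposition élémentaire
\[ \alpha_0^{m+n-1}=\alpha_0^{m-1}\vee\alpha_m^{m+n-1}, \]
qui découle de l'associativité et de la commutativité du joint $\vee$ : en écrivant $\alpha_0^{m+n-1}=\bigvee_{k=0}^{m+n-1}T^{-k}(\alpha)$, il suffit de séparer les indices $0\leq k\leq m-1$ des indices $m\leq k\leq m+n-1$.

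L'observation clé est ensuite que le second bloc est un translaté, par $T^{-m}$, du recouvrement de longueur $n$. Comme l'image réciproque commute avec l'intersection, $T^{-m}(A\cap B)=T^{-m}(A)\cap T^{-m}(B)$, l'opérateur $T^{-m}$ se distribue sur le joint, et l'on obtient
\[ \alpha_m^{m+n-1}=\bigvee_{j=0}^{n-1}T^{-(m+j)}(\alpha)=T^{-m}\Big(\bigvee_{j=0}^{n-1}T^{-j}(\alpha)\Big)=T^{-m}(\alpha_0^{n-1}), \]
en identifiant $T^{-m}=(T^m)^{-1}$, ce qui est licite puisque $T$ est un homéomorphisme et que $T^m$ est donc une application continue de $X$ dans $X$.

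Il ne reste alors qu'à combiner deux résultats déjà établis. D'une part, \ref{joint} appliqué aux recouvrements ouverts finis $\alpha_0^{m-1}$ et $T^{-m}(\alpha_0^{n-1})$ donne
\[ D(\alpha_0^{m+n-1})\leq D(\alpha_0^{m-1})+D\big(T^{-m}(\alpha_0^{n-1})\big). \]
D'autre part, \ref{reciproque} appliqué à l'application $T^m:X\to X$ et au recouvrement $\alpha_0^{n-1}$ fournit $D\big((T^m)^{-1}(\alpha_0^{n-1})\big)\leq D(\alpha_0^{n-1})$. En substituant cette majoration dans l'inégalité précédente, on conclut immédiatement à l'inégalité voulue.

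Je m'attends à ce qu'il n'y ait pas d'obstacle réel : la sous-additivité est ici une conséquence formelle de la sous-additivité de la dimension d'un joint (\ref{joint}) combinée à son invariance par image réciproque (\ref{reciproque}). Le seul point à soigner est la vérification que $T^{-m}$ se distribue correctement sur $\vee$ et que les hypothèses de ces deux propositions sont bien satisfaites ; les cas dégénérés $m=1$ ou $n=1$ se traitent avec la convention naturelle $\alpha_a^a=T^{-a}(\alpha)$, pour laquelle la décomposition et les deux majorations restent valables.
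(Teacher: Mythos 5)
Votre démonstration est correcte et suit exactement la même démarche que celle du mémoire : la décomposition $\alpha_0^{m+n-1}=\alpha_0^{m-1}\vee T^{-m}(\alpha_0^{n-1})$, puis la majoration du joint par \ref{joint} et l'invariance par image réciproque via \ref{reciproque} appliquée à $T^m$. Vous explicitez simplement un point laissé implicite dans le texte, à savoir que $T^{-m}$ se distribue sur le joint parce que l'image réciproque commute avec l'intersection, ce qui est une vérification bienvenue mais ne change rien à l'argument.
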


\begin{proof}
Soient m et n dans $\mathbb{N}$. Alors :
\[ \alpha_0^{n+m-1} = \alpha_0^{n-1}\vee \alpha_n^{n+m-1}=\alpha_0^{n-1}\vee T^{-n}(\alpha_0^{m-1})\]
Donc d'après la proposition précédente et \ref{reciproque} :
\[ D(\alpha_0^{n+m-1})\leq D(\alpha_0^{n-1})+D(T^{-n}(\alpha_0^{m-1}))\leq D(\alpha_0^{n-1})+D(\alpha_0^{m-1})\]
\end{proof}

On en déduit que pour tout recouvrement ouvert fini $\alpha$ de X, la suite
$(\frac{D(\alpha_0^{n-1})}{n})_{n\in \mathbb{N}}$ admet une limite, et de plus :

\[ \lim_{n\to +\infty}\frac{D(\alpha_0^{n-1})}{n}=\inf_{n\in \mathbb{N}}\frac{D(\alpha_0^{n-1})}{n} \]

\begin{Def}
On définit la dimension moyenne du système dynamique (X,T):
\[ mdim(X,T)=\sup_\alpha \lim_{n\to \infty }\frac{D(\alpha_0^{n-1})}{n} \]
où $\alpha$ parcourt tous les recouvrements ouverts finis de X.
\end{Def}

On démontre ici quelques propriétés de la moyenne dimension. On remarquera des similitudes avec la dimension topologique, dans les énoncés et la manière de les démontrer.

\begin{propo}
 Si (X,T) et (Y,S) sont conjugués, alors $mdim(X,T)=mdim(Y,S)$.
\end{propo}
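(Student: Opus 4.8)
The plan is to use the homeomorphism underlying the conjugacy to transport finite open covers from $X$ to $Y$ while preserving every ingredient entering the definition of $mdim$. Since $(X,T)$ and $(Y,S)$ are conjugate, there is a homeomorphism $\phi : X \to Y$ with $S = \phi \circ T \circ \phi^{-1}$, hence $S^{-k} = \phi \circ T^{-k} \circ \phi^{-1}$ for every $k$. For a finite open cover $\alpha = (U_i)_{i\in I}$ of $X$ I would set $\phi(\alpha) = (\phi(U_i))_{i\in I}$; because $\phi$ is a homeomorphism this is a finite open cover of $Y$, and $\alpha \mapsto \phi(\alpha)$ is a bijection between the finite open covers of $X$ and those of $Y$, of inverse $\beta \mapsto \phi^{-1}(\beta)$.

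First I would record that $\phi$ preserves the quantity $D$. Since $x \in U \iff \phi(x)\in\phi(U)$, one has $ord_{\phi(x)}(\phi(\alpha)) = ord_x(\alpha)$, so $ord(\phi(\alpha)) = ord(\alpha)$; moreover $\beta \succ \alpha$ if and only if $\phi(\beta)\succ\phi(\alpha)$. As $\phi$ sends refinements bijectively to refinements and keeps the order unchanged, the minimum defining $D$ is the same on both sides, giving $D(\phi(\alpha)) = D(\alpha)$.

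Next I would check that $\phi$ intertwines the two dynamical constructions of covers. From $S^{-k} = \phi T^{-k}\phi^{-1}$ one gets $\phi(T^{-k}(\alpha)) = S^{-k}(\phi(\alpha))$, and since $\phi$ is a bijection it commutes with the join, $\phi(\gamma\vee\delta)=\phi(\gamma)\vee\phi(\delta)$. Combining these,
\[ \phi(\alpha_0^{n-1}) = \phi\Big(\bigvee_{k=0}^{n-1}T^{-k}(\alpha)\Big) = \bigvee_{k=0}^{n-1}S^{-k}(\phi(\alpha)) = (\phi(\alpha))_0^{n-1}, \]
where the left-hand family is built with $T$ and the right-hand one with $S$. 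Applying the invariance of $D$ then yields $D(\alpha_0^{n-1}) = D\big((\phi(\alpha))_0^{n-1}\big)$ for every $n$.

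Finally, dividing by $n$ and letting $n\to\infty$ shows that the limit attached to $\alpha$ for $(X,T)$ equals the limit attached to $\phi(\alpha)$ for $(Y,S)$. Taking the supremum and using that $\alpha \mapsto \phi(\alpha)$ exhausts all finite open covers of $Y$ as $\alpha$ ranges over those of $X$, I conclude $mdim(X,T) = mdim(Y,S)$. I do not expect a genuine obstacle: the whole argument is bookkeeping about how $\phi$ transports covers, joins, and iterates. The one point deserving care is the identity $\phi(T^{-k}(\alpha)) = S^{-k}(\phi(\alpha))$, where the conjugacy relation must be used in exactly the right form; everything else is formal once $D$ is known to be a homeomorphism invariant at the level of individual covers.
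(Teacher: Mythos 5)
Your proof is correct and takes essentially the same route as the paper: both transport covers through the conjugating homeomorphism, exploit that $D$ is unchanged under it, and use the intertwining identity $\phi(\alpha_0^{n-1})=(\phi(\alpha))_0^{n-1}$ before passing to the limit and the supremum. The only cosmetic difference is that the paper obtains the two inequalities by applying its general bound $D(f^{-1}(\beta))\leq D(\beta)$ (proposition \ref{reciproque}) once to $h$ and once to $h^{-1}$, whereas you verify the exact equality $D(\phi(\alpha))=D(\alpha)$ by hand via order preservation and the bijection on refinements --- the same bookkeeping packaged slightly differently.
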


\begin{proof}
Soit $h:X\longrightarrow Y$ un homéomorphisme tel que $h\circ T=S\circ h$. Soit $\alpha$ un recouvrement ouvert fini de Y et $\tilde{\alpha}=h^{-1}(\alpha)$. Soit $n\in \mathbb{N}$. On a $\tilde{\alpha}_0^{n-1}=h^{-1}(\alpha_0^{n-1})$, donc d'après \ref{reciproque}, $D(\alpha_0^{n-1})\geq D(\tilde{\alpha}_0^{n-1} )$. On obtient :$mdim(X,T)\leq \frac{D(\alpha_0^{n-1})}{n}$, donc finalement $mdim(X,T)\leq mdim(Y,S)$. En reprenant la même démonstration avec $h^{-1}$, on obtient $mdim(X,T)\geq mdim(Y,S)$.
Finalement $mdim(X,T)=mdim(Y,S)$.
\end{proof}

\begin{propo}\label{sousesp}
Si F est un sous-ensemble fermé de X invariant par T, alors $mdim(F,T|_F)\leq mdim(X,T)$.
\end{propo}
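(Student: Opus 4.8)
Le plan est de reproduire au niveau dynamique l'argument de \ref{dimfermé}. Soit $\beta$ un recouvrement ouvert fini de $F$ : on l'écrit sous la forme $\beta=(U_i\cap F)_{i\in I}$ où chaque $U_i$ est un ouvert de $X$ (ce qui est possible puisque $F$ porte la topologie induite), puis on pose $\alpha=(U_i)_{i\in I}\cup \{X\setminus F\}$, qui est un recouvrement ouvert fini de $X$. L'objectif sera de comparer, pour chaque $n$, la quantité $D(\beta_0^{n-1})$ calculée dans $(F,T|_F)$ à la quantité $D(\alpha_0^{n-1})$ calculée dans $(X,T)$, puis de diviser par $n$ et de passer à la limite.

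On commencerait par établir le lemme auxiliaire suivant, qui n'est rien d'autre que le mécanisme de \ref{dimfermé} : pour tout recouvrement ouvert fini $\delta$ de $X$, sa trace $\delta|_F=(W\cap F)_{W\in \delta}$ vérifie $D_F(\delta|_F)\leq D(\delta)$, où $D_F$ désigne l'invariant $D$ calculé dans $F$. En effet, en choisissant $\gamma \succ \delta$ avec $ord(\gamma)=D(\delta)$, le recouvrement $\gamma|_F$ raffine $\delta|_F$, et pour tout $x\in F$ on a $\mathds{1}_{W\cap F}(x)=\mathds{1}_W(x)$, donc $ord_x(\gamma|_F)=ord_x(\gamma)$ ; il vient $ord(\gamma|_F)\leq ord(\gamma)$, d'où $D_F(\delta|_F)\leq D(\delta)$.

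Le point délicat, et le principal obstacle, sera de montrer que la trace sur $F$ du recouvrement dynamique $\alpha_0^{n-1}$ coïncide, à des ensembles vides près, avec $\beta_0^{n-1}$. C'est ici qu'intervient l'invariance de $F$ : comme $T$ est un homéomorphisme et $T(F)=F$, on a aussi $T^{-1}(F)=F$, si bien que $T^{-k}(X\setminus F)\cap F=\emptyset$ et $(T|_F)^{-k}(U_i\cap F)=T^{-k}(U_i)\cap F$ pour tout $k$. Ainsi, tout élément de $\alpha_0^{n-1}$ faisant intervenir un facteur $T^{-k}(X\setminus F)$ a une trace vide sur $F$, tandis que les autres donnent exactement les éléments de $\beta_0^{n-1}$ ; on obtient donc $(\alpha_0^{n-1})|_F=\beta_0^{n-1}$ une fois les ensembles vides supprimés. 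Il faudra enfin vérifier, ce qui est immédiat, que l'ajout ou le retrait d'ensembles vides ne modifie ni $ord$ ni $D$ d'un recouvrement.

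Il ne restera qu'à combiner ces deux faits : $D(\beta_0^{n-1})=D_F\bigl((\alpha_0^{n-1})|_F\bigr)\leq D(\alpha_0^{n-1})$ pour tout $n$. En divisant par $n$ et en passant à la limite, on a $\lim_n \frac{D(\beta_0^{n-1})}{n}\leq \lim_n \frac{D(\alpha_0^{n-1})}{n}\leq mdim(X,T)$ ; puis, en prenant le supremum sur tous les recouvrements ouverts finis $\beta$ de $F$ (chacun provenant d'un tel $\alpha$), on conclut $mdim(F,T|_F)\leq mdim(X,T)$.
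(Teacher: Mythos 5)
Votre démonstration est correcte et suit essentiellement la même voie que la preuve du mémoire : on prolonge un recouvrement ouvert fini de $F$ en un recouvrement de $X$ par adjonction de l'ouvert $X\setminus F$, on compare $D(\beta_0^{n-1})$ (calculé dans $F$) à $D(\alpha_0^{n-1})$ (calculé dans $X$) via la trace sur $F$ comme dans \ref{dimfermé}, puis on divise par $n$ et on passe à la limite. Vous explicitez au passage deux points que le texte laisse implicites — l'usage de l'invariance de $F$ (via $T^{-k}(X\setminus F)\cap F=\emptyset$ et $(T|_F)^{-k}(U_i\cap F)=T^{-k}(U_i)\cap F$) pour identifier $(\alpha_0^{n-1})|_F$ à $\beta_0^{n-1}$ aux ensembles vides près, et le fait que ces ensembles vides n'affectent ni $ord$ ni $D$ — ce qui rend l'argument plus complet que l'original.
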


\begin{proof}
Soit $\alpha$ un recouvrement ouvert fini de F. Il existe une famille d'ouverts de X, $({U_i})_{i\in I}$, telle que : $F\subset \bigcup_{i\in I}U_i$. Comme $X\backslash F$ est un ouvert de X, $\beta =({U_i})_{i\in I}\cup (X\backslash F)$ est un recouvrement ouvert fini de X. Soit $n\in \mathbb{N}$ et $\gamma$ un recouvrement ouvert fini de X raffinant $\beta_0^{n-1}$ tel que $ord(\gamma)=D(\beta_0^{n-1})$. Soit $\tilde{\gamma}=(U\cap F)_{U\in \gamma}$. Alors $\tilde{\gamma}$ est un recouvrement ouvert fini de F raffinant $\alpha_0^{n-1}$, et tel que $ord(\tilde{\gamma})\leq ord(\gamma)$. Donc :
\[ord(\alpha_0^{n-1})\leq ord(\tilde{\gamma})\leq ord(\gamma)=D(\beta)\] Finalement, $mdim(F,T|_F)\leq mdim(X,T)$.
\end{proof}

\begin{propo}
Soit $m\in \mathbb{N}$. Alors $mdim(X,T^m)=m\times mdim(X,T)$.
\end{propo}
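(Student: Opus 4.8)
L'idée est de reproduire le schéma de preuve de l'égalité $h(T^m)=m\,h(T)$ de l'entropie topologique, en démontrant séparément les deux inégalités. L'observation clé est une identité combinatoire de réindexation. Pour un recouvrement $\alpha$, notons $\bigvee_{k=0}^{N-1}T^{-k}(\alpha)$ le recouvrement $\alpha_0^{N-1}$ relatif à $T$, et observons que, comme $T^m$ est aussi un homéomorphisme de $X$, tous les résultats précédents (sous-additivité, existence de la limite, monotonie de $D$) s'appliquent au système $(X,T^m)$. Je poserais $\beta=\alpha_0^{m-1}=\bigvee_{j=0}^{m-1}T^{-j}(\alpha)$ et vérifierais que le recouvrement obtenu en appliquant à $\beta$ le procédé relatif à $T^m$ redonne exactement $\alpha_0^{mn-1}$ : puisque $(T^m)^{-i}(\beta)=\bigvee_{j=0}^{m-1}T^{-(mi+j)}(\alpha)$ et que les entiers $mi+j$, pour $0\le i\le n-1$ et $0\le j\le m-1$, décrivent exactement $\{0,\dots,mn-1\}$, on a $\bigvee_{i=0}^{n-1}(T^m)^{-i}(\beta)=\alpha_0^{mn-1}$.

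Pour la minoration, cette identité donne directement l'égalité des ordres minimaux : $D\big(\bigvee_{i=0}^{n-1}(T^m)^{-i}(\beta)\big)=D(\alpha_0^{mn-1})$. En divisant par $n$ et en passant à la limite, j'utiliserais que la sous-suite d'indices $k=mn$ converge vers la même limite que la suite entière $\big(\tfrac{D(\alpha_0^{k-1})}{k}\big)_k$, dont l'existence a été établie plus haut ; on obtient
\[ \lim_{n\to\infty}\frac{D\big(\bigvee_{i=0}^{n-1}(T^m)^{-i}(\beta)\big)}{n}=\lim_{n\to\infty}\frac{D(\alpha_0^{mn-1})}{n}=m\lim_{k\to\infty}\frac{D(\alpha_0^{k-1})}{k}. \]
Le membre de gauche étant majoré par $mdim(X,T^m)$, et le membre de droite valant $m$ fois la contribution de $\alpha$ à $mdim(X,T)$, le passage au supremum sur $\alpha$ fournit $mdim(X,T^m)\ge m\,mdim(X,T)$.

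Pour la majoration, je partirais cette fois du recouvrement $\bigvee_{i=0}^{n-1}(T^m)^{-i}(\alpha)=\bigvee_{i=0}^{n-1}T^{-mi}(\alpha)$. Les indices $\{mi:0\le i\le n-1\}$ étant inclus dans $\{0,\dots,mn-1\}$, le recouvrement $\alpha_0^{mn-1}$ raffine $\bigvee_{i=0}^{n-1}(T^m)^{-i}(\alpha)$. Par la monotonie de $D$ vis-à-vis du raffinement (troisième point de la remarque suivant la définition de $D(\alpha)$), il vient $D\big(\bigvee_{i=0}^{n-1}(T^m)^{-i}(\alpha)\big)\le D(\alpha_0^{mn-1})$. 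En divisant par $n$, en passant à la limite comme ci-dessus, puis au supremum sur $\alpha$, on obtient $mdim(X,T^m)\le m\,mdim(X,T)$, ce qui conclut.

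L'essentiel de la difficulté réside dans l'identité de réindexation $\bigvee_{i=0}^{n-1}(T^m)^{-i}(\alpha_0^{m-1})=\alpha_0^{mn-1}$ et dans le suivi soigneux du système (relatif à $T$ ou à $T^m$) auquel chaque recouvrement et chaque quantité $D$ se rapportent. Une fois cette identité posée, le facteur $m$ apparaît naturellement grâce à l'argument de sous-suite $k=mn$, et le reste est routinier.
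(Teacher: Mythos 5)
Votre preuve est correcte et suit essentiellement la même démarche que celle du mémoire : mêmes deux inégalités, la minoration reposant sur l'identité de réindexation $\bigvee_{i=0}^{n-1}(T^m)^{-i}\bigl(\alpha_0^{m-1}\bigr)=\alpha_0^{mn-1}$ et la majoration sur le raffinement $\alpha_0^{mn-1}\succ\bigvee_{i=0}^{n-1}T^{-mi}(\alpha)$ joint à la monotonie de $D$. Vous ne faites qu'expliciter des détails que le texte laisse implicites (l'argument de sous-suite $k=mn$ via la convergence de la suite sous-additive, et le suivi du système, $T$ ou $T^m$, auquel chaque recouvrement se rapporte), ce qui rend la rédaction plus propre que l'originale sans en changer la substance.
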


\begin{proof}
Soit $\alpha$ un recouvrement ouvert fini de X. Alors pour tout $n\in \mathbb{N}$ :
\[ lim_{n\to +\infty}\frac{D(\alpha \vee T^{-m}...\vee T^{-m(n-1)} )}{n}\leq m\times lim_{n\to +\infty}\frac{D(\alpha_0^{mn-1})}{nm} \]
On en déduit $mdim(X,T^m)\leq m\times mdim(X,T)$ D'autre part,
\[ \alpha_0^{mn-1}=\alpha_0^{k-1}\vee T^{-k}\alpha_0^{k-1} ... \vee T^{-(n-1)m}\alpha_0^{n-1} \]
D'où : $\frac{D(\alpha_0^{mn-1})}{mn}\leq \frac{mdim(X,T^m)}{m}$.
Donc $mdim(X,T)\leq \frac{mdim(X,T^m)}{m} $. Finalement $mdim(X,T^m)=m\times mdim(X,T)$.
\end{proof}

\begin{propo}
Soit $I\subset \mathbb{N}$, et $(X_i,T_i)_{i\in I}$ une famille de systèmes dynamiques (finie ou non). Alors :
\[ mdim(\prod_{i\in I}X_i,\prod_{i\in I}T_i)\leq \sum_{i\in I}mdim(X_i,T_i) \]
\end{propo}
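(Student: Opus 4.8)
L'idée est de reprendre la méthode de \ref{produit} : on ramène un recouvrement ouvert fini arbitraire du produit à un recouvrement ne dépendant que d'un nombre fini de coordonnées, puis on exploite la sous-additivité de $D$ pour le joint ainsi que son comportement par image réciproque. Notons $X=\prod_{i\in I}X_i$, qui est compact par le théorème de Tychonoff, $T=\prod_{i\in I}T_i$, et $\pi_i:X\to X_i$ les projections, qui vérifient $\pi_i\circ T=T_i\circ\pi_i$.

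Je commencerais par fixer un recouvrement ouvert fini quelconque $\gamma$ de $X$ et par me ramener à un nombre fini de facteurs. Chaque élément de $\gamma$ est ouvert pour la topologie produit, donc s'écrit comme une réunion d'ouverts élémentaires ne dépendant chacun que d'un nombre fini de coordonnées ; la compacité de $X$ permet de n'en garder qu'un nombre fini. En rassemblant les coordonnées mises en jeu, on obtient un ensemble fini $J\subset I$ et des recouvrements ouverts finis $\alpha_i$ de $X_i$, pour $i\in J$, tels que $\beta:=\bigvee_{i\in J}\pi_i^{-1}(\alpha_i)$ raffine $\gamma$.

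L'étape suivante consiste à faire commuter la dynamique avec le joint. Comme $\pi_i\circ T=T_i\circ\pi_i$, on a $T^{-k}\circ\pi_i^{-1}=\pi_i^{-1}\circ T_i^{-k}$, et donc, en regroupant les joints,
\[ \beta_0^{n-1}=\bigvee_{k=0}^{n-1}\bigvee_{i\in J}\pi_i^{-1}\bigl(T_i^{-k}(\alpha_i)\bigr)=\bigvee_{i\in J}\pi_i^{-1}\bigl((\alpha_i)_0^{n-1}\bigr), \]
où $(\alpha_i)_0^{n-1}$ désigne le joint relatif au système $(X_i,T_i)$. Le raffinement étant préservé par $T^{-k}$ et par passage au joint, on a $\beta_0^{n-1}\succ\gamma_0^{n-1}$, d'où $D(\gamma_0^{n-1})\leq D(\beta_0^{n-1})$ par monotonie de $D$ pour le raffinement ; puis \ref{joint}, appliquée par récurrence au joint fini, et enfin \ref{reciproque}, donnent
\[ D(\gamma_0^{n-1})\leq D(\beta_0^{n-1})\leq\sum_{i\in J}D\bigl(\pi_i^{-1}((\alpha_i)_0^{n-1})\bigr)\leq\sum_{i\in J}D\bigl((\alpha_i)_0^{n-1}\bigr). \]

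Il ne reste plus qu'à diviser par $n$ et à passer à la limite : comme la somme est finie et que chaque terme est majoré par $mdim(X_i,T_i)$, on obtient $\lim_{n\to\infty}\frac{D(\gamma_0^{n-1})}{n}\leq\sum_{i\in J}mdim(X_i,T_i)\leq\sum_{i\in I}mdim(X_i,T_i)$, puis on conclut en prenant le supremum sur $\gamma$. Le point délicat est la première étape, la réduction à un nombre fini de coordonnées : c'est là qu'interviennent de façon essentielle la structure de la topologie produit et la compacité de $X$, le reste n'étant qu'une application directe des propositions déjà établies.
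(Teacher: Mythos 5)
Votre preuve est correcte et suit essentiellement la même démarche que celle du mémoire : réduction à un nombre fini de coordonnées via la topologie produit et la compacité, puis application de \ref{joint} et \ref{reciproque} au joint $\bigvee_{i\in J}\pi_i^{-1}(\alpha_i)$ avant division par $n$ et passage à la limite. Vous explicitez même deux points que le texte original laisse implicites, à savoir la commutation $\beta_0^{n-1}=\bigvee_{i\in J}\pi_i^{-1}\bigl((\alpha_i)_0^{n-1}\bigr)$ et le recours à \ref{reciproque} pour majorer $D\bigl(\pi_i^{-1}((\alpha_i)_0^{n-1})\bigr)$ par $D\bigl((\alpha_i)_0^{n-1}\bigr)$.
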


\begin{proof}
Soit $\alpha$ un recouvrement ouvert fini de $X=\prod_{i\in I}X_i$. Tout ouvert U dans $\alpha$ peut être décrit comme l'union d'ouverts de la forme $U_1\times U_2\times ...\times U_{n_i}$, où chaque $U_i$ est un ouvert de $X_i$. On peut donc trouver un entier N et un recouvrement ouvert fini de X, $\beta$, raffinant $\alpha$, de la forme :
\[ \beta =\pi_1^{-1}(\beta_1)\vee ... \vee \pi_N^{-1}(\beta_N) \]
où les $\beta_i$ sont des recouvrements ouverts finis des $X_i$, et $\pi_i :X \to X_i$ la projection sur la i-ième coordonnée.
Alors, d'après \ref{joint}, pour tout $n\in \mathbb{N}$,
\[D(\alpha_0^{n-1})\leq D(\beta_0^{n-1})\leq \sum_{i=1}^N D((\beta_i)_0^{n-1})  \]
On en déduit :
\[ \lim_{n\to \infty }\frac{D(\alpha_0^{n-1})}{n}\leq \sum_{i=1}^N \lim_{n\to \infty }\frac{D((\beta_j)_0^{n-1})}{n}\leq \sum_{i=1}^Nmdim(X_i,T_i)\]
\end{proof}

\subsection{Exemples}

D'après la proposition qui suit, la notion de moyenne dimension topologique ne devient pertinente qu'en dimension topologique infinie.

\begin{propo}
Supposons X de dimension topologique finie. Soit $T:X\to X$ une application continue. Alors $mdim(X,T)=0$.
\end{propo}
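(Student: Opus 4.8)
Le plan est de revenir à la définition de la moyenne dimension et d'exploiter le fait que la dimension topologique de $X$ majore uniformément les quantités $D(\beta)$ sur tous les recouvrements ouverts finis $\beta$ de $X$. Je fixerais donc un recouvrement ouvert fini $\alpha$ de $X$ arbitraire, et j'observerais que pour tout $n\in\mathbb{N}$, le recouvrement
\[ \alpha_0^{n-1}=T^0(\alpha)\vee T^{-1}(\alpha)\vee\cdots\vee T^{-(n-1)}(\alpha) \]
est lui aussi un recouvrement ouvert fini de $X$ (joint fini de recouvrements ouverts finis, les $T^{-k}(\alpha)$ étant des recouvrements ouverts finis puisque $T$ est continue).

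L'étape clé est alors la majoration uniforme. Par définition même de la dimension topologique comme $dim(X)=\sup_\beta D(\beta)$ où $\beta$ parcourt les recouvrements ouverts finis de $X$, on a $D(\beta)\leq dim(X)$ pour tout tel recouvrement. En particulier, en appliquant ceci à $\beta=\alpha_0^{n-1}$, j'obtiendrais
\[ D(\alpha_0^{n-1})\leq dim(X) \]
et cette borne est indépendante de $n$. C'est précisément ici que l'hypothèse de finitude de $dim(X)$ intervient de façon essentielle : si $dim(X)=\infty$ la majoration serait vide.

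Il ne resterait plus qu'à diviser par $n$ et à passer à la limite : comme $dim(X)$ est fini, on a
\[ \lim_{n\to\infty}\frac{D(\alpha_0^{n-1})}{n}\leq \lim_{n\to\infty}\frac{dim(X)}{n}=0, \]
et cette limite est donc nulle (elle est positive par construction). Le recouvrement $\alpha$ ayant été choisi quelconque, le passage au supremum sur tous les recouvrements ouverts finis donne $mdim(X,T)=\sup_\alpha 0=0$, ce qui conclut. Je ne m'attends à aucun obstacle réel dans cette preuve : l'argument est entièrement porté par l'observation que $\alpha_0^{n-1}$ reste un recouvrement ouvert fini, de sorte que la borne $dim(X)$ s'applique uniformément en $n$ et écrase la normalisation par $1/n$.
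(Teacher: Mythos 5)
Votre preuve est correcte et suit exactement la même démarche que celle du mémoire : majorer uniformément $D(\alpha_0^{n-1})$ par $\dim(X)$ (qui est fini par hypothèse), puis constater que la normalisation par $\frac{1}{n}$ écrase cette borne, d'où une limite nulle pour chaque recouvrement $\alpha$ et donc $mdim(X,T)=0$. Rien à redire : l'argument est complet et identique dans sa structure à celui du texte.
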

\begin{proof}
Soit N la dimension topologique de X. Soit $\alpha$ un recouvrement ouvert fini de X. Alors $D(\alpha_0^{n-1})\leq N$. On en déduit que $\lim_{n\to +\infty} \frac{D(\alpha_0^{n-1})}{n}=0$, donc que $mdim(X,T)=0$.
\end{proof}

\begin{Ex}
Si $(X_i,T_i)_{i\in \mathbb{N}}$ est une suite de systèmes dynamiques, tous de dimension topologique finie, alors $mdim(\prod_{i\in I}X_i,\prod_{i\in I}T_i)=0$
\end{Ex}

Dans la fin de cette section, on calcule la moyenne dimension de certains décalages. On va obtenir des premiers exemples, simples, de systèmes dynamiques de moyenne dimension n pour tout $n\in \mathbb{N}$. Mais contrairement à la dimension topologique, la moyenne dimension peut prendre toutes les valeurs réelles positives, ce qu'on verra dans la section suivante.

Dans tout la suite, pour tout compact K, on notera $\sigma$ le décalage bilatéral sur l'ensemble des suites $K^\mathbb{Z}$.

De plus, pour $I\subset \mathbb{N}$, $\pi_{I}$ désignera la projection qui à un élément de $K^\mathbb{Z}$ associe la liste dans $K^{card(I)}$ de ses coordonnées indicées par I.\\

\begin{propo}
Soit K un compact de dimension topologique finie d. Alors $mdim(K^\mathbb{Z},\sigma)\leq d$.
\end{propo}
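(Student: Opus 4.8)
Le plan est de se ramener à des recouvrements ne dépendant que d'un nombre fini de coordonnées, puis d'appliquer la sous-additivité de la dimension topologique des produits établie au \ref{produit}. L'idée directrice est que, pour le décalage, le joint $\alpha_0^{n-1}$ d'un recouvrement qui « vit » sur un bloc fini de coordonnées ne fait intervenir qu'un bloc de taille linéaire en $n$, ce qui produira une borne du type $D(\alpha_0^{n-1})\leq (\text{const}+n)\,d$.

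Premièrement, soit $\alpha$ un recouvrement ouvert fini de $K^\mathbb{Z}$. Par définition de la topologie produit et par compacité de $K^\mathbb{Z}$, chaque ouvert de $\alpha$ s'écrit comme réunion finie de cylindres ne dépendant que d'un nombre fini de coordonnées ; exactement comme dans la preuve de \ref{produit}, on peut donc trouver un entier $N$ et un recouvrement ouvert fini $\gamma$ de $K^{\{-N,\dots,N\}}$ tels que $\beta=\pi_{\{-N,\dots,N\}}^{-1}(\gamma)$ raffine $\alpha$. Comme le raffinement est préservé par les opérations $T^{-j}$ et par le joint, on a $\beta_0^{n-1}\succ\alpha_0^{n-1}$ pour tout $n$, d'où $D(\alpha_0^{n-1})\leq D(\beta_0^{n-1})$ par monotonie de $D$ (Remarque 3). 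Il suffit donc de borner $\frac{D(\beta_0^{n-1})}{n}$ pour ces recouvrements particuliers.

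Deuxièmement, je calcule $\beta_0^{n-1}$. Puisque $\sigma$ est le décalage, $\pi_0\circ\sigma^{j}=\pi_j$, donc plus généralement $T^{-j}\bigl(\pi_F^{-1}(\gamma)\bigr)=\pi_{F+j}^{-1}(\gamma)$. En posant $F=\{-N,\dots,N\}$ et $F'=\bigcup_{j=0}^{n-1}(F+j)=\{-N,\dots,N+n-1\}$, de cardinal $2N+n$, le joint $\beta_0^{n-1}$ s'écrit donc $\pi_{F'}^{-1}(\delta_n)$ pour un recouvrement ouvert fini $\delta_n$ de $K^{F'}\cong K^{2N+n}$. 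D'après \ref{reciproque}, $D\bigl(\pi_{F'}^{-1}(\delta_n)\bigr)\leq D(\delta_n)$. Or $K^{F'}$ est compact métrisable, donc par \ref{produit} appliqué par récurrence, $dim(K^{F'})\leq |F'|\cdot d=(2N+n)d$, et par définition de la dimension topologique $D(\delta_n)\leq dim(K^{F'})$. On en tire $D(\alpha_0^{n-1})\leq (2N+n)d$, puis
\[ \lim_{n\to\infty}\frac{D(\alpha_0^{n-1})}{n}\leq\lim_{n\to\infty}\frac{(2N+n)d}{n}=d. \]
Ceci valant pour tout $\alpha$, le supremum définissant $mdim$ donne $mdim(K^\mathbb{Z},\sigma)\leq d$.

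Le point le plus délicat sera la première étape, c'est-à-dire la réduction propre à des recouvrements de la forme $\pi_F^{-1}(\gamma)$ : il faut justifier soigneusement, via la topologie produit et la compacité, qu'on peut raffiner $\alpha$ par un tel recouvrement, exactement comme dans la preuve du \ref{produit}. Le reste est essentiellement de la comptabilité d'indices (vérifier que $T^{-j}\pi_F^{-1}=\pi_{F+j}^{-1}$ avec le bon signe et que $|F'|=2N+n$), ainsi qu'une récurrence immédiate sur le nombre de facteurs pour obtenir $dim(K^m)\leq m\,d$ à partir de \ref{produit}.
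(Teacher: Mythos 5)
Votre démonstration est correcte et suit essentiellement la même démarche que celle du mémoire : réduction, par la topologie produit et la compacité, à un recouvrement de la forme $\pi_{\{-N,\dots,N\}}^{-1}(\gamma)$, identification du joint $\beta_0^{n-1}$ avec l'image réciproque $\pi_{\{-N,\dots,N+n-1\}}^{-1}(\delta_n)$ d'un recouvrement d'un bloc fini, puis majoration via \ref{reciproque} et la borne $dim(K^m)\leq md$ issue de \ref{produit}, avant passage à la limite. Notez que votre décompte $|F'|=2N+n$ est exact là où le mémoire écrit $2N+1+n$, différence sans incidence sur la limite $d$.
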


\begin{proof}
Soit $\tilde{\alpha}$ un recouvrement ouvert fini de $K^\mathbb{Z}$.

Par définition de la topologie produit, $\tilde{\alpha}$ admet un sous-recouvrement par des ouverts de la forme $\pi _{-k,...,k}^{-1}(V)$, où les V sont des ouverts de $K^{2k+1}$.

Par compacité de $K^\mathbb{Z}$, on peut donc trouver $N\in \mathbb{N}$ et un sous-recouvrement fini $\alpha$ de $\tilde{\alpha}$ par des ouverts de la forme $\pi _{\{ -N,...,N\} }^{-1}(V)$, où les V sont des ouverts de $K^{2N+1}$.

Pour $n\in \mathbb{N}$, $\alpha_0^{n-1}$ est de la forme $\pi _{-N,...,N+n-1}^{-1}(\beta)$ où $\beta$ est un recouvrement ouvert fini de $\pi _{\{ -N,...,N+n-1\} }(K^\mathbb{Z})\subset K^{2N+1+n}$. Donc d'après \ref{reciproque}, $D(\alpha_0^{n-1})\leq D(\beta)$. Donc :

\[D(\tilde{\alpha}_0^{n-1})\leq D(\alpha_0^{n-1})\leq D(\beta)\leq d\times (2N+1+n)\]

On en déduit :
\[\frac{D(\tilde{\alpha}_0^{n-1})}{n} \leq \frac{d\times (2N+1+n)}{n}\]
D'où :
\[ \lim_{n\to \infty }\frac{D(\tilde{\alpha}_0^{n-1})}{n} \leq d\]

\end{proof}

\begin{propo}\label{minoration}
Soit X un sous-décalage de $([0,1]^d)^\mathbb{Z}$ et $\theta$ un réel tels qu'il existe un ensemble d'indices $I\subset \mathbb{N}$ tel que :
\begin{enumerate}
\item \[\theta=\limsup_{n\to +\infty} \frac{|I\cap \{0,...,n-1\}|}{n}\]
\item il existe $\bar{x} \in X$ tel que pour tout $x\in ([0,1]^d)^\mathbb{Z}$ tel que
\[ \pi _{\mathbb{Z}\textbackslash I}(x)=\pi _{\mathbb{Z}\textbackslash I}(\bar{x}) \]
alors $x\in X$
\end{enumerate}
Alors $mdim(X,\sigma)\geq \theta d.$

\end{propo}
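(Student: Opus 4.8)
Le plan est de minorer directement $mdim(X,\sigma)$ en exhibant un recouvrement ouvert fini $\alpha$ de $X$ pour lequel $\lim_{n}\frac{D(\alpha_0^{n-1})}{n}\geq\theta d$. L'idée sous-jacente est que la condition 2 fait apparaître dans $X$ un cube de grande dimension : en fixant toutes les coordonnées d'indice hors de $I$ aux valeurs de $\bar x$ et en laissant libres celles d'indice dans $I\cap\{0,\ldots,n-1\}$, on obtient un cube de dimension $d\,|I\cap\{0,\ldots,n-1\}|$ dans $X$. Il s'agira de transférer la dimension de ce cube au recouvrement dynamique $\alpha_0^{n-1}$ à l'aide de \ref{reciproque} et de \ref{Lebesgue}.

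D'abord, je choisis un recouvrement ouvert fini $\alpha'$ de $[0,1]^d$ dont aucun élément ne rencontre deux faces opposées : un tel recouvrement existe par compacité, par exemple un recouvrement par des ouverts de diamètre strictement inférieur à $1$ (deux faces opposées étant à distance $1$). Je pose alors $\alpha=\pi_{\{0\}}^{-1}(\alpha')$, qui est un recouvrement ouvert fini de $X$. Avec la convention de décalage, $\alpha_0^{n-1}$ ne dépend que des coordonnées d'indice dans $\{0,\ldots,n-1\}$ et est exactement le recouvrement de $X$ par les ensembles $\{x\in X : x_k\in A'_k \text{ pour tout } k\in\{0,\ldots,n-1\}\}$, avec $A'_k\in\alpha'$.

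Ensuite, pour $n\in\mathbb{N}$, je note $m=|I\cap\{0,\ldots,n-1\}|$ et je considère l'application continue
\[ \iota : ([0,1]^d)^{I\cap\{0,\ldots,n-1\}} \longrightarrow X \]
qui à $z$ associe le point dont les coordonnées d'indice dans $I\cap\{0,\ldots,n-1\}$ sont données par $z$ et dont toutes les autres coordonnées valent celles de $\bar x$ ; la condition 2 assure que $\iota$ est bien à valeurs dans $X$. En calculant $\iota^{-1}(\alpha_0^{n-1})$, les conditions portant sur les coordonnées d'indice hors de $I$ deviennent constantes (donc triviales), et l'on vérifie que $\iota^{-1}(\alpha_0^{n-1})$ coïncide, à des éléments vides ou répétés près, avec le recouvrement produit $\beta_m$ de $([0,1]^d)^m\cong[0,1]^{dm}$ formé des ensembles $\prod_{k} A'_k$, $A'_k\in\alpha'$.

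Il reste à conclure. D'une part, \ref{reciproque} appliquée à $\iota$ donne $D(\alpha_0^{n-1})\geq D(\iota^{-1}(\alpha_0^{n-1}))=D(\beta_m)$. D'autre part, comme aucun élément de $\alpha'$ ne rencontre deux faces opposées de $[0,1]^d$, aucun élément du produit $\beta_m$ ne rencontre deux faces opposées de $[0,1]^{dm}$, et \ref{Lebesgue} fournit $D(\beta_m)\geq dm$. Ainsi $D(\alpha_0^{n-1})\geq d\,|I\cap\{0,\ldots,n-1\}|$ pour tout $n$ ; en divisant par $n$, en utilisant l'existence de la limite et la condition 1, on obtient
\[ mdim(X,\sigma)\geq \lim_{n\to\infty}\frac{D(\alpha_0^{n-1})}{n}\geq \limsup_{n\to\infty}\frac{d\,|I\cap\{0,\ldots,n-1\}|}{n}=\theta d. \]
La difficulté principale sera de mettre correctement en place l'application $\iota$ et d'identifier $\iota^{-1}(\alpha_0^{n-1})$ au recouvrement produit, puis de vérifier que la propriété de ne rencontrer aucune paire de faces opposées se transmet bien au produit afin de pouvoir invoquer \ref{Lebesgue} ; le reste de l'argument est formel.
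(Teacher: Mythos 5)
Votre démonstration est correcte et suit pour l'essentiel la même stratégie que le mémoire : même recouvrement $\alpha=\pi_{\{0\}}^{-1}(\alpha')$ induit par un recouvrement de $[0,1]^d$ dont aucun élément ne rencontre deux faces opposées, même cube $([0,1]^d)^{|I\cap\{0,\ldots,n-1\}|}$ plongé dans $X$ grâce à $\bar{x}$ et à la condition 2, et même appel final au lemme de Lebesgue (\ref{Lebesgue}) suivi du passage à la limite supérieure. La seule différence est de mise en forme : là où le texte pousse en avant un recouvrement arbitraire raffinant $\alpha_0^{n-1}$ par la projection $\pi_{I_n}$ restreinte à la tranche définie par $\bar{x}$ et compare les ordres à la main, vous tirez en arrière $\alpha_0^{n-1}$ le long de la section continue $\iota$ et invoquez \ref{reciproque}, ce qui est une formalisation un peu plus propre du même argument.
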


\begin{proof}
Soit $\tilde{\alpha}$ un recouvrement ouvert fini de $[0,1]^d$, tel qu'aucun de ses éléments n'intersecte deux faces opposées de $[0,1]^d$. Alors $\pi_{1}^{-1}(\tilde{\alpha})$ induit un recouvrement ouvert fini $\alpha$ de X.
\\Soient $n\in \mathbb{N}$, et $\beta$ un sous-recouvrement arbitraire de $\alpha_{0}^{n-1}$. Posons $I_{n}=I\cap \{0,...,n-1\}$ et :
\[\tilde{\beta}=\{ \pi_{I_{n}}(\{x\in U | \pi _{\mathbb{Z}\textbackslash I}(x)=\pi _{\mathbb{Z}\textbackslash I}(\bar{x}))\}_{U\in \beta } \]

Alors $\tilde{\beta}$ est un recouvrement ouvert fini de $([0,1]^d)^{|I_{n}|}$. En effet, soit $U\in \beta$. Si U est de la forme $\prod_{i\in \mathbb{Z}}U_i$ où chaque $U_i$ est un ouvert de $[0,1]^d$. Alors $\pi_{I_n}(\{x\in U | \pi _{\mathbb{Z}\textbackslash I}(x)=\pi _{\mathbb{Z}\textbackslash I}(\bar{x})\} )$ est ouvert en tant que produit d'ouverts de $([0,1]^d)^{|I_n|}$. Comme tout ouvert de $\beta$ s'écrit comme union d'ouverts de cette forme, on en déduit que tous les éléments de $\tilde{\beta}$ sont ouverts.

De plus, comme aucun élément de $\alpha$ n'intersecte deux faces opposées de $[0,1]^d$, si $V\in \alpha_0^{n-1}$, $\pi_{\{ 0,...,n-1 \}}(V)$ n'intersecte pas deux faces opposées de $([0,1]^d)^n$. Donc a fortiori, aucun élément de $\tilde{\beta}$ n'intersecte deux faces opposées de $[0,1]^{d|I_n|}$. Donc d'après \ref{Lebesgue} : $ord(\tilde{\beta})\geq d|I_{n}|$.

D'autre part, $ord(\beta)\geq ord(\tilde{\beta})$. En effet, soit $x'\in ([0,1]^d)^{|I_n|}$. Alors si $x'\in V\in \tilde{\beta}$, il existe $x\in U\in \beta$ tel que $x'=\pi_{I_n}(x)$. Donc $ord_{x'} (\tilde{\beta}) \leq ord_x(\beta)\leq ord(\beta)+1$. Finalement, $ord(\beta)\geq ord(\tilde{\beta})$.

On en déduit que $ord(\beta)\geq d|I_{n}|$, et donc que $D(\alpha_{0}^{n-1})\geq d|I_{n}|$.
Finalement :
\[mdim(X) \geq \lim_{n\to+\infty} \frac{D(\alpha_{0}^{n-1})}{n}\geq d\limsup_{n\to +\infty} \frac{|I\cap \{0,...,n-1\}|}{n}=\theta d\]
\end{proof}

\begin{coro}
Pour tout $d\in \mathbb{N}$, $mdim(([0,1]^d)^\mathbb{Z},\sigma)=d$.
\end{coro}

On remarque que pour démontrer ce résultat, on a procédé de manière analogue à la démonstration de $dim([0,1]^d)=d$. On a notamment la même utilisation du lemme de Lebesque.

\begin{coro}
$mdim(([0,1]^\mathbb{N})^\mathbb{Z},\sigma)=\infty$.
\end{coro}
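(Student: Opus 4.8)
Le plan est d'imiter l'argument utilisé plus haut pour établir que $dim([0,1]^\mathbb{N})=\infty$ : on exhibe, pour chaque $d\in \mathbb{N}$, une copie fermée et $\sigma$-invariante de $(([0,1]^d)^\mathbb{Z},\sigma)$ à l'intérieur de $(([0,1]^\mathbb{N})^\mathbb{Z},\sigma)$, puis on applique \ref{sousesp} conjointement au corollaire précédent qui affirme que $mdim(([0,1]^d)^\mathbb{Z},\sigma)=d$.

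Concrètement, je commencerais par fixer $d\in \mathbb{N}$ et par plonger $[0,1]^d$ dans $[0,1]^\mathbb{N}$ via l'application $(x_1,\dots,x_d)\mapsto (x_1,\dots,x_d,0,0,\dots)$, dont l'image est un sous-ensemble fermé de $[0,1]^\mathbb{N}$. Ce plongement induit une injection de $([0,1]^d)^\mathbb{Z}$ dans $([0,1]^\mathbb{N})^\mathbb{Z}$ ; je noterais $F_d$ son image, c'est-à-dire l'ensemble des suites $(u_k)_{k\in \mathbb{Z}}$ dont chaque coordonnée $u_k$ appartient à $[0,1]^d\times \{0\}$. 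L'ensemble $F_d$ est fermé comme produit de fermés pour la topologie produit, et il est invariant par $\sigma$ puisque le décalage se contente de permuter les coordonnées indexées par $\mathbb{Z}$ sans modifier leur appartenance à $[0,1]^d\times \{0\}$. De plus, la restriction $\sigma|_{F_d}$ est clairement conjuguée au décalage sur $([0,1]^d)^\mathbb{Z}$.

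J'invoquerais alors \ref{sousesp} pour obtenir $mdim(F_d,\sigma|_{F_d})\leq mdim(([0,1]^\mathbb{N})^\mathbb{Z},\sigma)$, et le corollaire précédent (via l'invariance de la moyenne dimension par conjugaison) pour identifier $mdim(F_d,\sigma|_{F_d})=d$. On en déduit $d\leq mdim(([0,1]^\mathbb{N})^\mathbb{Z},\sigma)$, et comme cette minoration vaut pour tout entier $d$, on conclut $mdim(([0,1]^\mathbb{N})^\mathbb{Z},\sigma)=\infty$. Il n'y a ici pas d'obstacle sérieux : la seule vérification véritablement à conduire est celle du caractère fermé et $\sigma$-invariant de $F_d$, qui est routinière, tout le contenu dynamique étant déjà encapsulé dans \ref{sousesp} et dans le calcul de la moyenne dimension des décalages $([0,1]^d)^\mathbb{Z}$.
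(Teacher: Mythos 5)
Votre démonstration est correcte et suit essentiellement la même démarche que le mémoire : le sous-ensemble $F_d$ que vous construisez coïncide avec le sous-décalage $A^{\mathbb{Z}}$ considéré dans le texte (suites dont chaque coordonnée a ses composantes d'indice $\geq d$ nulles), fermé, $\sigma$-invariant et conjugué à $(([0,1]^d)^{\mathbb{Z}},\sigma)$. La conclusion s'appuie, comme dans la preuve originale, sur \ref{sousesp}, sur le corollaire $mdim(([0,1]^d)^{\mathbb{Z}},\sigma)=d$ et sur l'invariance de la moyenne dimension par conjugaison, pour minorer par $d$ arbitraire.
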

\begin{proof}
Soit $n\in \mathbb{N}$. Posons $A=\{ (u_k)_{k\in \mathbb{N}}\: | \: \forall k\geq n, u_k=0\}$. Alors $A^\mathbb{N}$ est un sous-ensemble fermé de $[0,1]^\mathbb{Z}$, $\sigma$-invariant, et homéomorphe à $([0,1]^n)^\mathbb{Z}$. Donc d'après \ref{sousesp}, $n=mdim(A^\mathbb{Z},\sigma)\leq mdim(([0,1]^\mathbb{N})^\mathbb{Z},\sigma)$. Ceci est vrai pour tout $n\in \mathbb{N}$, donc $mdim(([0,1]^\mathbb{N})^\mathbb{Z},\sigma)=\infty$.
\end{proof}

\subsection{Systèmes minimaux de moyenne dimension r donnée}

Notre but est de démontrer la proposition suivante :

\begin{propo}\label{minimal}
Soit un réel $r\geq 0$. Il existe $X\subset [0,1]^\mathbb{Z}$ et $m\in \mathbb{Z}$ tel que $mdim(X,\sigma^m)=r$ et tel que $(X,\sigma^m)$ est minimal.
\end{propo}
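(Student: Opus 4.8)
Le plan est de traiter d'abord les petites valeurs, puis d'atteindre les valeurs $r>1$ en dilatant par une puissance de $\sigma$. Comme $X\subset[0,1]^\mathbb{Z}$, la proposition \ref{sousesp} et la proposition assurant $mdim(K^\mathbb{Z},\sigma)\leq dim(K)$ donnent $mdim(X,\sigma)\leq dim([0,1])=1$; pour atteindre une valeur $r>1$ le passage à une puissance de $\sigma$ est donc indispensable. J'écris $r=m\rho$ avec $m=\max(1,\lceil r\rceil)\in\mathbb{N}$ et $\rho=r/m\in[0,1]$, et j'utilise l'égalité $mdim(X,\sigma^m)=m\,mdim(X,\sigma)$. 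Il suffit alors de construire $X\subset[0,1]^\mathbb{Z}$ tel que $(X,\sigma)$ soit minimal de moyenne dimension $\rho$ et totalement minimal (toutes ses puissances minimales), de sorte que $(X,\sigma^m)$ soit minimal de moyenne dimension $m\rho=r$. Le cas $\rho=0$ se règle à part par un sous-décalage minimal classique à alphabet fini (par exemple sturmien), qui est de dimension topologique nulle, donc de moyenne dimension nulle, et totalement minimal.

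Pour $\rho\in\,]0,1]$, je procède par une construction hiérarchique par blocs à la manière de Lindenstrauss--Weiss. On fixe une suite de longueurs $L_1\mid L_2\mid\cdots$ et une suite $\epsilon_k\to 0$. Dans un bloc de niveau $k+1$, de longueur $L_{k+1}$, on réserve une proportion tendant vers $0$ de coordonnées « marqueurs » (de valeur imposée) servant à rendre lisible la structure de blocs et à en assurer la récurrence syndétique, et on remplit les autres emplacements par des blocs de niveau $k$ dont les coordonnées « libres » parcourent $[0,1]$. Les proportions sont réglées pour que la densité asymptotique des coordonnées libres soit exactement $\rho$. Le point clef pour la minimalité est que les valeurs, et non seulement les positions, des sous-blocs libres récurrent : on impose donc que, dans chaque bloc de niveau $k+1$, la famille des sous-blocs de niveau $k$ présents soit $\epsilon_k$-dense (pour la distance uniforme) dans l'ensemble des blocs de niveau $k$ admissibles. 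On définit $X$ comme l'ensemble des limites de décalés de concaténations de tels blocs; c'est un sous-décalage fermé de $[0,1]^\mathbb{Z}$, invariant par $\sigma$. Comme cette condition de $\epsilon_k$-densité est satisfaite par tout point de $X$ et à toute échelle, toute fenêtre d'un point $y$ est approchée arbitrairement bien par une occurrence dans l'orbite de n'importe quel $x\in X$, ce qui donne la minimalité.

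Il reste à établir $mdim(X,\sigma)=\rho$ par double inégalité. Pour la majoration, on raisonne comme dans la preuve de $mdim(K^\mathbb{Z},\sigma)\leq dim(K)$ : tout recouvrement ouvert fini de $X$ se raffine en un recouvrement ne dépendant que des coordonnées d'une fenêtre $\{-M,\dots,M\}$, de sorte que, via \ref{reciproque}, $D(\alpha_0^{n-1})$ est majoré par la dimension topologique de la projection de $X$ sur la fenêtre $\{-M,\dots,M+n-1\}$; or cette projection est, à un nombre $o(n)$ de paramètres de structure près, un produit de $(\rho+o(1))n$ facteurs $[0,1]$, donc de dimension $(\rho+o(1))n$, d'où $mdim(X,\sigma)\leq\rho$. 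Pour la minoration, on adapte la méthode de \ref{minoration} : dans chaque fenêtre $\{0,\dots,n-1\}$ on isole $\approx\rho n$ coordonnées libres sur lesquelles la restriction de $X$ est surjective sur le cube $[0,1]^{\approx\rho n}$; en tirant en arrière un recouvrement de ce cube dont aucun élément ne rencontre deux faces opposées et en appliquant \ref{Lebesgue}, on obtient $D(\alpha_0^{n-1})\geq\rho n-o(n)$, d'où $mdim(X,\sigma)\geq\rho$.

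La principale difficulté est de concilier la minimalité et la valeur exacte $\rho$. La minimalité exige la récurrence de toute l'information, donc la présence, à chaque échelle, d'une famille $\epsilon_k$-dense de sous-blocs libres; il faut régler simultanément les longueurs $L_k$, le nombre d'emplacements et la finesse $\epsilon_k$ pour que cette exigence n'épuise pas la longueur disponible au point de faire chuter la densité des coordonnées libres en dessous de $\rho$, ni qu'elle la dépasse. C'est ce réglage, ainsi que la vérification que la condition de densité définit bien un fermé $\sigma$-invariant minimal, qui constitue le cœur technique. Enfin, pour garantir que $(X,\sigma^m)$, et pas seulement $(X,\sigma)$, soit minimal, on s'assure de la minimalité totale en choisissant la structure de blocs de sorte que le système ne possède aucun facteur cyclique non trivial; c'est une contrainte combinatoire que l'on peut imposer au fil de la construction.
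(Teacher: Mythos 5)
Votre proposition est correcte dans ses grandes lignes et suit essentiellement la même stratégie que le mémoire : réduction au cas $\rho\in\,]0,1]$ via $mdim(X,\sigma^m)=m\,mdim(X,\sigma)$, construction hiérarchique par blocs à la Lindenstrauss--Weiss où des coordonnées libres de densité exactement $\rho$ cohabitent avec des coordonnées forcées de densité évanescente, minoration par \ref{minoration} (donc par le lemme de Lebesgue \ref{Lebesgue}) et majoration par \ref{majoration} via la dimension des projections -- ce sont précisément les deux lemmes et le même réglage des proportions (le mémoire choisit $a_k$ avec $\sum\log(1+1/a_k)=-\log r$) qui portent la preuve du texte. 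Deux différences méritent d'être relevées. D'abord, le mécanisme de récurrence : vous imposez dans chaque bloc de niveau $k+1$ un $\epsilon_k$-réseau explicite de blocs de niveau $k$, alors que le mémoire insère une unique fenêtre $(y_{-L_{n+1}},\dots,y_{L_{n+1}-1})$ d'un point $y$ d'orbite dense dans $X_n$ ; cette fenêtre contient automatiquement, comme sous-mots, des approximations à $2^{-n}$ près de tous les points de $X_n$, ce qui est plus économique et évite de dénombrer le réseau (dont le cardinal croît exponentiellement en $L_k$ puisque l'alphabet est $[0,1]$), mais les deux mécanismes sont équivalents en substance. Ensuite, et c'est à votre crédit, vous relevez explicitement que la minimalité de $(X,\sigma)$ n'entraîne pas celle de $(X,\sigma^m)$ et proposez d'assurer la minimalité totale : le mémoire passe ce point sous silence (sa preuve n'établit que la minimalité de $(X,\sigma)$), ce qui est une vraie lacune du texte. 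Notez qu'on peut aussi la réparer sans contrainte combinatoire supplémentaire : $X$ étant $\sigma$-minimal, il se décompose en $k\,|\,m$ composantes fermées $\sigma^m$-minimales, deux à deux disjointes et échangées cycliquement par $\sigma$, donc conjuguées entre elles ; en remplaçant $X$ par l'une d'elles on garde un fermé de $[0,1]^\mathbb{Z}$, $\sigma^m$-minimal, de moyenne dimension encore égale à $r$ (la moyenne dimension d'une union finie de fermés invariants étant le maximum de celles des morceaux). Enfin, soyez conscient que votre rédaction laisse en suspens, en le signalant honnêtement, le cœur technique (réglage simultané des longueurs $L_k$, des finesses $\epsilon_k$ et des proportions, et vérification que la densité supérieure des coordonnées libres vaut exactement $\rho$) -- c'est exactement ce que la récurrence du mémoire, avec ses paramètres $r_{n+1}$, $L_{n+1}$, $q_{n+1}=2L_{n+1}(a_{n+1}+1)$, exécute en détail.
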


Dans la suite, pour $I\subset \mathbb{N}$, $\pi_{I}$ désignera la projection qui à un élément de $[0,1]^\mathbb{Z}$ associe la liste dans $[0,1]^{card(I)}$ de ses coordonnées indicées par I.\\

Soit un réel $r> 0$. Soient t un réel et $m\in \mathbb{N}$ tels que $\frac{r}{m}=t$ et $0< t<1$. Alors s'il existe un sous-décalage $X\subset [0,1]^\mathbb{Z}$ tel que $mdim(X,\sigma)=t$, on a $mdim(X,\sigma^m)=m\times mdim(X,\sigma)=r$. On peut donc supposer dans la suite $r<1$.\\

On va définir une suite de sous-décalages $X_n$ de type bloc associés à $(q_n,B_n)$, où $B_n$ est un sous-ensemble fermé de $[0,1]^{q_n}$.
$X_n$ est l'ensemble des suites de $[0,1]^\mathbb{N}$ qui peuvent s'écrire comme concaténation d'éléments appartenant à $B_n$.
On pose pour tout $n\in \mathbb{N}$:
\[ \Phi_n = \{ (x_i)_{i\in \mathbb{Z}} \: | \: (x_i,...,x_{i+q_n-1}) \in B_n \: si \: i\equiv 0\:  mod \: q_n \} \]

Le sous-décalage recherché sera :
\[X=\bigcap _{n\in \mathbb{N}} X_{n}\]\\

On va construire la suite $(X_n)_{n\in \mathbb{N}}$ de manière à utiliser \ref{minoration} pour minorer mdim(X,T) par r, et le lemme suivant pour majorer mdim(X,T) par r :
Ce lemme est une version simplifiée de la proposition 4.1 dans \cite{K}.

\begin{Lemme}\label{majoration}
Soit un sous-décalage $X\subset [0,1]^\mathbb{Z}$. Alors
\[mdim(X,\sigma)\leq \liminf_{n \to +\infty} \frac{dim(\pi_{n}(X))}{n}\]
où $\pi_{n}:[0,1]^\mathbb{Z}\to [0,1]^n$ est la projection qui à un élément de $[0,1]^\mathbb{Z}$ associe ses n premières coordonnées.
\end{Lemme}

\begin{proof}

Soit $\tilde{\alpha}$ un recouvrement ouvert fini arbitraire de X.

Par définition de la topologie produit, tout ouvert de $\tilde{\alpha}$ est une union d'ouverts de la forme $\pi _{-k,...,k}^{-1}(V)$, où les V sont des ouverts de $[0,1]^{2k+1}$.

Par compacité de X, on peut donc trouver $N\in \mathbb{N}$ et un recouvrement ouvert fini $\alpha$ tel que $\alpha \succ \tilde{\alpha}$ composé d'ouverts de la forme $\pi _{\{ -N,...,N\} }^{-1}(V)$, où les V sont des ouverts de $[0,1]^{2N+1}$.

$\alpha_0^{n-1}$ est de la forme $\pi _{-N,...,N+n-1}^{-1}(\beta)$ où $\beta$ est un recouvrement ouvert fini de $\pi _{\{ -N,...,N+n-1\} }(X)$. Donc d'après \ref{reciproque}, $D(\alpha_0^{n-1})\leq D(\beta)$.

Donc :
\[D(\tilde{\alpha}_0^{n-1})\leq D(\alpha_0^{n-1})\leq D(\beta)\leq dim( \pi _{\{ -N,...,N+n-1\} }(X)))\]

De plus, $\pi _{\{ -N,...,N+n-1\} }(X)\subset \pi _{\{ -N,...,N-1\} }(X)\times \pi _{n}(X)$. Donc d'après \ref{produit} :
\[ dim( \pi _{\{ -N,...,N+n-1\} }(X))\leq dim(\pi _{\{ -N,...,N-1\} }(X))+dim(\pi _{n}(X))\]
Donc $D(\tilde{\alpha}_0^{n-1})\leq dim(\pi _{\{ -N,...,N-1\} }(X))+dim(\pi _{n}(X))$ et en divisant par n de chaque côté, on en déduit :
\[\lim_{n\to +\infty}\frac{D(\tilde{\alpha}_0^{n-1})}{n}\leq \liminf_{n \to +\infty} \frac{dim(\pi_{n}(X))}{n} \]
D'où :
\[mdim(X,\sigma)\leq \liminf_{n \to +\infty} \frac{dim(\pi_{n}(X))}{n}\]
\end{proof}

Démontrons maintenant \ref{minimal}.

\begin{proof}

Pour construire la suite $(X_n)_{n\in \mathbb{N}}$, on procède par récurrence.

Posons $q_0=0$, $B_0=[0,1]$ et $I_0=\mathbb{N}$.

Supposons que pour un certain $n\in \mathbb{N}$ on ait déjà construit $q_n$ et $B_n$.
$(X_n,\sigma)$ est un sous-décalage de type bloc, il existe donc pour ce système un élément y dont l'orbite est dense dans $X_n$.
Quitte à composer plusieurs fois y par $\sigma$, on peut supposer : $\pi_{\{ 0,...,q_n-1\} }(y)\in B_n$.
D'autre part, il existe un entier $r_{n+1}$ assez grand pour que :
\[ \pi_{\{ -r_{n+1},...,r_{n+1}\} }(x)=\pi_{\{- r_{n+1},...,r_{n+1}\} }(x') \Rightarrow d(x,x')\leq 2^{-n}\]

On peut alors trouver un entier $L_{n+1}\geq r_{n+1}$, multiple de $q_n$, tel que pour tout $x\in X_n$, il existe $k\in \{ -L_{n+1}+r_{n+1}, ..., L_{n+1}-r_{n+1}\}$ tel que :
\[ d(x,\sigma ^k(y))\leq 2^{-n} \]

Posons $q_{n+1}=2L_{n+1}(a_{n+1}+1)$, où $a_n$ est un entier qui reste à déterminer.
On définit enfin $B_{n+1}$ comme l'ensemble des concaténations de $\frac{q_{n+1}}{q_n}$ blocs appartenant à $B_n$, se terminant par le $2L_{n+1}-uplet : (y_{-L_{n+1}}, ..., y_{L_{n+1}-1}\} $.

L'orbite de tout point de $X_{n+1}$ est $2^{-(n+1)}-dense$. En effet, soit $x' \in X_{n+1}$ et $x \in X_{n+1}$. Alors il existe $k\in \{ -L_{n+1}+r_{n+1}, ..., L_{n+1}-r_{n+1}\}$ tel que :
\[ d(x',\sigma ^k(y))\leq 2^{-n} \]
D'autre part, comme $k\in \{ -L_{n+1}+r_{n+1}, ..., L_{n+1}-r_{n+1}\}$ et $x \in X_{n+1}$, on a  $\pi_{\{ -r_{n+1},...,r_{n+1}\} }(\sigma ^k(x))=\pi_{\{- r_{n+1},...,r_{n+1}\} }(\sigma ^k(y))$, donc $d(\sigma ^k(x), \sigma ^k(y))\leq 2^{-n}$. Donc $d(x', \sigma ^k(x))\leq 2^{-(n+1)}$.
On en déduit que l'orbite de tout point de X est dense dans X, c'est-à-dire que X est minimal.\\

De plus, on va construire pour tout $n\in \mathbb{N}$ un sous-ensemble $I_n\subset \mathbb{Z}$ et un élément $x_n\in X_n$, vérifiant les propriétés suivantes :
\begin{enumerate}
\item la suite $(I_n)_{n\in \mathbb{N}}$ est décroissante
\item $\pi_{\mathbb{Z} \backslash I_n}(x)=\pi_{\mathbb{Z} \backslash I_n}(x_n) \Rightarrow x\in X_n$
\item $\forall m\geq n, \pi_{\mathbb{Z} \backslash I_n}(x_m)=\pi_{\mathbb{Z} \backslash I_n}(x_n)$
\end{enumerate}

On aura alors qu'en définissant $I=\bigcap _{n\in \mathbb{N}} I_{n}$ et $X=\bigcap _{n\in \mathbb{N}} X_{n}$:
\[ mdim(X,\sigma)\geq \limsup_{n\to +\infty} \frac{|I\cap \{0,...,n-1\}|}{n}\]
En effet, on peut extraire de la suite $(x_n)_{n\in \mathbb{N}}$ une sous-suite convergeant vers $ \bar{x} \in X$. Alors, comme pour tout $n\in \mathbb{N}$, $\pi_{\mathbb{Z} \backslash I_n}(x_n)=\pi_{\mathbb{Z} \backslash I_n}(\bar{x})$, on a :
\[\pi_{\mathbb{Z} \backslash I}(\bar{x})=\pi_{\mathbb{Z} \backslash I}(x)\Rightarrow x\in X\]
On peut donc appliquer le \ref{majoration}.\\

Construisons ces éléments pour tout $n\in \mathbb{N}$.

Pour tout $n\in \mathbb{N}$, posons $R_n$ l'ensemble des entiers $i\in \mathbb{Z}$ tels qu'il existe $k\in \{ 0,...,q_n-2L_n\}$ tel que $i\equiv k\: mod \: q_n$.
On pose alors $I_n=\bigcap_{i=0}^n R_i$ et $x_n$ est un élément arbitraire de $\Phi_n$. \\

Vérifions qu'on obtient les propriétés voulues.

La suite $(I_n)_{n\in \mathbb{N}}$ est décroissante.

Soit x tel que $\pi_{\mathbb{Z} \backslash I_n}(x)=\pi_{\mathbb{Z} \backslash I_n}(x_n)$. Alors pour tout $l\leq n$, si i est tel qu'il existe $k\in \{ q_l-2L_l+1,...,q_l\}$ tel que $ i\equiv k \: mod \: q_l$, alors la i-ème coordonnée de x est égale à celle de $x_n$ : c'est $y_{k-q_l+L_l-1}$. Par définition des $B_l$, on en déduit : $x\in \Phi_n$. Donc $x\in X_n$.

Enfin, soit $m\geq n$. $x_m$ appartient à $\Phi_m$, donc à $\Phi_n$. Donc par un raisonnement analogue, $\pi_{\mathbb{Z} \backslash I_n}(x_m)=\pi_{\mathbb{Z} \backslash I_n}(x_n)$.\\

Calculons la densité supérieure de I.

Soit $n\in \mathbb{N}$. Alors pour $m>n$, on a $q_n\leq q_m-2L_m$ car $q_n\:  |\: L_m \: |\: q_m-2L_m$. Donc $\{ 0,...,q_n-1\} \subset R_m$. Donc :
\[ I\cap \{0,...,q_n-1\} = \bigcap_{k=1}^n R_k \]

Donc :
\[ \frac{card(I\cap \{0,...,q_n-1\})}{q_n}=\prod_{k=1}^n \frac{q_k-2L_k}{q_k}\]
Et comme pour tout $1\leq k\leq n$, $q_k=2L_k(a_k+1)$ par définition, on a :
\[ \frac{card(I\cap \{0,...,q_n-1\})}{q_n}=\prod_{k=1}^n \frac{2a_kL_k}{2a_kL_k+2L_k}=(\prod_{k=1}^n 1+\frac{1}{a_k})^{-1}\]

Comme 0<r<1, on peut choisir les $a_n$ de manière à ce que $\sum_{n=1}^\infty log(1+\frac{1}{a_n})=-log(r)$. On a alors :
\[ \frac{card(I\cap \{0,...,q_n-1\})}{q_n}=(\prod_{k=1}^n 1+\frac{1}{a_k})^{-1}=r \]
Donc :
\[ \limsup_{n\to +\infty} \frac{|I\cap \{0,...,n-1\}|}{n}\geq r\]

Finalement, $mdim(X,T)\geq r$.\\

Enfin, soit $n\in \mathbb{N}$. Alors $mdim(X,\sigma)\leq mdim(X_{n},\sigma)$, et d'après \ref{majoration}, \[mdim(X_{n},\sigma)\leq \liminf_{N \to +\infty} \frac{dim(\pi_{N}(X_{n}))}{N}.\]
Or, pour tout $k\in \mathbb{N}$, par définition de $X_n$,
\[\pi_{q_nk}(X_{n})\subset \bigcup _{0\leq i\leq q_n}([0,1]^i\times B_{n}^{k-1}\times [0,1]^{q_n-i}) \]
Pour tout $i\in \mathbb{N}$,
\[ dim([0,1]^i\times B_{n}^{k-1}\times [0,1]^{q_n-i})\leq q_n+dim(B_n^{k-1})=q_n+(k-1)\times dim(B_n)\]
Or on a la relation de récurrence suivante :
\[ \forall n\in \mathbb{N}, dim(B_{n+1})=dim(B_n)\times \frac{q_{n+1}-2L_{n+1}}{q_n}=dim(B_n)\times \frac{a_{n+1}\times 2L_{n+1}}{q_n}\]
Ce qui permet de calculer :
\[ \forall n\in \mathbb{N}, \frac{dim(B_n)}{q_n}=\prod_{i=1}^{n}\frac{a_i}{a_i+1} \]

Donc
\[\frac{dim(\pi_{q_nk}(X_{n}))}{q_n k}\leq \frac{1}{k}+\frac{k-1}{k}\times \prod_{i=1}^{n} \frac{a_i}{a_i+1} \]
Donc :
\[mdim(X_{n},\sigma)\leq \liminf_{k \to +\infty} \frac{\pi_{q_nk}(X_{n})}{q_nk}\leq \prod_{i=1}^{n}\frac{a_i}{a_i+1}.\]
Donc $mdim(X,\sigma)\leq r$.

\end{proof}

\section{Plongements dans des décalages}

Dans ce chapitre, sauf mention contraire, X et Y seront des espaces compacts métrisables, T et S des homéomorphismes, respectivement sur X et Y. On se donne d un entier, et on cherche à savoir quand un système dynamique (X,T) peut se plonger dans le décalage $(([0,1]^d)^\mathbb{Z},\sigma)$.

\subsection{Généralités, premiers exemples}

On rappelle qu'on dit qu'une application $\phi:X\to Y$ est un plongement si elle induit un homéomorphisme de X sur $\phi(X)$.

\begin{Def}
Soient X et Y des espaces topologiques, T un homéomorphisme de X, S un homéomorphisme de Y. On dit que le système dynamique (X,T) se plonge dans le système dynamique (Y,S) s'il existe un plongement $\phi :X\longrightarrow Y$ tel que $\phi \circ T=S\circ \phi$.
\end{Def}

\begin{propo}
Pour que le système dynamique (X,T) se plonge dans le décalage $(([0,1]^d)^\mathbb{Z},\sigma)$, il faut que $mdim(X,T)\leq d$.
\end{propo}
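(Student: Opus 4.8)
Le plan est de combiner trois résultats déjà établis : l'invariance de la moyenne dimension par conjugaison, le comportement de la moyenne dimension vis-à-vis des sous-systèmes fermés invariants (\ref{sousesp}), et le calcul $mdim(([0,1]^d)^\mathbb{Z},\sigma)=d$.

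Je partirais de l'hypothèse que $(X,T)$ se plonge dans $(([0,1]^d)^\mathbb{Z},\sigma)$, c'est-à-dire qu'il existe un plongement $\phi : X \to ([0,1]^d)^\mathbb{Z}$ vérifiant $\phi \circ T = \sigma \circ \phi$. La première étape consiste à montrer que $\phi(X)$ est un fermé $\sigma$-invariant : comme $X$ est compact et $\phi$ continue, $\phi(X)$ est compact donc fermé dans $([0,1]^d)^\mathbb{Z}$, et l'équivariance jointe au fait que $T$ est un homéomorphisme de $X$ donne $\sigma(\phi(X))=\phi(T(X))=\phi(X)$. Ainsi $\sigma$ se restreint en un homéomorphisme $\sigma|_{\phi(X)}$ de $\phi(X)$.

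La deuxième étape est d'observer que $\phi$ réalise un homéomorphisme de $X$ sur $\phi(X)$ conjuguant $T$ à $\sigma|_{\phi(X)}$, donc que $(X,T)$ et $(\phi(X),\sigma|_{\phi(X)})$ sont conjugués ; la proposition sur l'invariance par conjugaison donne alors $mdim(X,T)=mdim(\phi(X),\sigma|_{\phi(X)})$. Enfin j'appliquerais \ref{sousesp} au fermé invariant $\phi(X)$ de $([0,1]^d)^\mathbb{Z}$ pour obtenir $mdim(\phi(X),\sigma|_{\phi(X)})\leq mdim(([0,1]^d)^\mathbb{Z},\sigma)=d$, d'où la conclusion $mdim(X,T)\leq d$. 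Cette preuve est essentiellement un assemblage de briques déjà posées ; le seul point demandant un peu de soin — mais sans réelle difficulté — est la vérification que l'image $\phi(X)$ est fermée et $\sigma$-invariante, qui repose uniquement sur la compacité de $X$ et l'équivariance de $\phi$.
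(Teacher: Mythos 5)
Votre preuve est correcte et suit exactement la voie du papier, qui se contente d'écrire \og c'est une conséquence directe de \ref{sousesp}\fg{} : vous ne faites qu'expliciter les détails implicites (image compacte donc fermée, $\sigma$-invariance par équivariance, invariance de $mdim$ par conjugaison, puis le calcul $mdim(([0,1]^d)^\mathbb{Z},\sigma)=d$ déjà établi en section 2). Cette explicitation est soignée mais ne constitue pas une approche différente.
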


\begin{proof}
C'est une conséquence directe de \ref{sousesp}.
\end{proof}

\begin{Rem}
Le système dynamique (X,T) peut toujours se plonger dans le décalage $(([0,1]^\mathbb{N})^\mathbb{Z}, \sigma)$. En effet, d'après \ref{plonge}, il existe un plongement $\phi$ de X dans $[0,1]^\mathbb{N}$. Alors on peut construire le plongement suivant :
\[ \Phi:x \longmapsto (...,\phi (T^{-1}(x),\phi (x),\phi (T(x)),...)\]
de (X,T) dans $(([0,1]^\mathbb{N})^\mathbb{Z}, \sigma)$.

D'autre part, si on suppose de plus que X est de dimension finie n, alors le système dynamique (X,T) se plonge dans le décalage $(([0,1]^{2n+1})^\mathbb{Z}, \sigma)$. En effet, d'après \ref{Menger}, il existe un plongement $\phi$ de X dans $[0,1]^{2n+1}$, et on peut conclure comme précédemment.
\end{Rem}

La fin de cette section est consacrée à la description d'une méthode utilisée systématiquement dans la suite, pour prouver l'existence de plongements dans des décalages.
Comme dans \ref{Menger} , on utilisera le théorème de Baire. La proposition suivante permet de se ramener à l'espace (métrique complet) $C(X,[0,1]^d)$.

\begin{propo}
Le système dynamique (X,T) se plonge dans le décalage $(([0,1]^d)^\mathbb{Z},\sigma)$ si et seulement s'il existe une application continue $f:X\to [0,1]^d$ telle que :
\[ \forall x\neq y, \exists i\in \mathbb{Z}, f(T^i(x))\neq f(T^i(y))\]
\end{propo}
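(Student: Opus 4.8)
Le plan est de faire apparaître le plongement comme l'application d'orbite associée à $f$, et réciproquement de reconstruire $f$ à partir d'un plongement en projetant sur la coordonnée d'indice $0$. Pour le sens réciproque de l'équivalence (l'existence de $f$ entraîne l'existence d'un plongement), je poserais
\[ \Phi : x \longmapsto (f(T^i(x)))_{i\in \mathbb{Z}} \]
de $X$ dans $([0,1]^d)^\mathbb{Z}$. Je vérifierais d'abord que $\Phi$ est continue : chaque application coordonnée $x\mapsto f(T^i(x))$ est continue comme composée de $f$ et d'un itéré de $T$, et une application à valeurs dans un produit muni de la topologie produit est continue si et seulement si chacune de ses coordonnées l'est. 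L'entrelacement des dynamiques est ensuite un calcul direct : la $i$-ème coordonnée de $\Phi(T(x))$ vaut $f(T^{i+1}(x))$, qui est aussi la $i$-ème coordonnée de $\sigma(\Phi(x))$, d'où $\Phi\circ T=\sigma \circ \Phi$. Enfin, l'injectivité de $\Phi$ est exactement l'hypothèse de séparation : si $x\neq y$, il existe $i$ tel que $f(T^i(x))\neq f(T^i(y))$, donc les $i$-èmes coordonnées de $\Phi(x)$ et $\Phi(y)$ diffèrent.

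Le point clé pour conclure dans ce sens est que, $X$ étant compact et $([0,1]^d)^\mathbb{Z}$ séparé, toute application continue injective de $X$ dans $([0,1]^d)^\mathbb{Z}$ induit un homéomorphisme sur son image, l'application réciproque étant automatiquement continue. Ainsi $\Phi$ est un plongement topologique qui entrelace les dynamiques, c'est-à-dire un plongement du système dynamique $(X,T)$ dans le décalage. C'est le même argument de compacité que celui déjà employé dans la preuve de \ref{plonge}.

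Pour le sens direct, partant d'un plongement $\Phi$ vérifiant $\Phi\circ T=\sigma\circ\Phi$, je poserais $f=\pi_{\{0\}}\circ \Phi$, où $\pi_{\{0\}}$ désigne la projection sur la coordonnée d'indice $0$. Alors $f$ est continue, et en itérant la relation d'entrelacement sous la forme $\Phi\circ T^i=\sigma^i\circ\Phi$, on obtient $f(T^i(x))=\pi_{\{0\}}(\sigma^i(\Phi(x)))$, c'est-à-dire la $i$-ème coordonnée de $\Phi(x)$. L'injectivité de $\Phi$ fournit alors immédiatement la propriété voulue : si $x\neq y$, il existe un indice $i$ où les coordonnées de $\Phi(x)$ et $\Phi(y)$ diffèrent, donc $f(T^i(x))\neq f(T^i(y))$.

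Je ne m'attends pas à rencontrer d'obstacle sérieux : la démonstration est essentiellement formelle et repose sur deux observations standard, la caractérisation de la continuité à valeurs dans un produit et le fait qu'une bijection continue d'un compact vers un séparé est un homéomorphisme. La seule vérification un peu soignée est cette dernière, garantissant la continuité de la réciproque du plongement.
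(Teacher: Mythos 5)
Votre démonstration est correcte et suit exactement la même démarche que celle du mémoire : construction de l'application d'orbite $I_f(x)=(f(T^i(x)))_{i\in\mathbb{Z}}$ dans un sens, projection sur la coordonnée d'indice $0$ combinée avec l'entrelacement $\phi\circ T^i=\sigma^i\circ\phi$ dans l'autre, et appel à la compacité de $X$ pour obtenir que la bijection continue sur l'image est un homéomorphisme. Vous explicitez simplement quelques vérifications (continuité coordonnée par coordonnée, argument compact--séparé) que le texte original laisse implicites.
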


\begin{proof}

Supposons que $\phi$ est un plongement de X dans $([0,1]^d)^\mathbb{Z}$, tel que $\phi \circ T=\sigma \circ \phi$. Soient x et y distincts dans X. Alors il existe $i\in \mathbb{Z}$ tel que les ièmes coordonnées respectives de $\phi (x)$ et $\phi (y)$ soient distinctes. Comme $\phi \circ T^i=\sigma ^i \circ \phi$, en posant $\pi : ([0,1]^d)^{\mathbb{Z}} \to [0,1]^d$ la projection sur la 0ième coordonnée, on a $\pi \circ \phi \circ T^i(x) \neq \pi \circ \phi \circ T^i(y)$.
En posant $f=\pi \circ \phi$, on a la propriété voulue.

Réciproquement, soit $f\in C(X,[0,1]^d)$ ayant la propriété énoncée. Pour tout x dans X, posons $I_f (x)=(f(T^i(x)))_{i\in \mathbb{Z}}$. L'application $I_f :X\to ([0,1]^d)^{\mathbb{Z}}$ est continue, injective, et $I_f \circ T=\sigma \circ I_f$. Comme X est compact, $I_f$ est donc bien un plongement du système dynamique (X,T) dans le décalage$(([0,1]^d)^\mathbb{Z},\sigma)$.
\end{proof}

On notera désormais, pour $f\in C(X,[0,1]^d)$ :
\[ I_f :x\longmapsto (f(T^i(x)))_{i\in \mathbb{Z}}\]

On pose :
\[ \Delta=\{ (x,x)\: |\: x\in X\}\: ; \: \Omega=(X\times X) \backslash \Delta \]
et on note, pour tout ensemble $K\subset \Omega$, $D_K$ l'ensemble des fonctions continues de X dans $[0,1]^d$ telles que :
\[ \forall (x,y)\in K, \exists i\in \mathbb{Z}, f(T^i(x))\neq f(T^i(y))\]
On cherche donc à montrer que $D_\Omega$ est non vide.
On va chercher, pour tous x et y distincts dans X, un voisinage compact K de (x,y) tel que $D_K$ est dense dans $C(X,[0,1]^d)$.

Alors grâce au lemme suivant, tous les $D_K$ considérés seront ouverts et denses dans $C(X,[0,1]^d)$.

\begin{Lemme}
Soit un ensemble compact $K\subset \Omega$. Alors $D_K$ est ouvert dans $C(X,[0,1]^d)$.
\end{Lemme}

\begin{proof}
Soit $f\in D_K$. Posons :
\begin{align*}
H : &X\times X \to \mathbb{R}& \\
&(x,y)\mapsto sup_{i\in \mathbb{Z}}|f(T^i(x))-f(T^i(y))|&
\end{align*}
Alors :\\
f est continue donc H est semi-continue inférieurement,\\
$\forall (x,y)\in K, H(x,y)>0$,\\
K est compact

On en déduit que H est minorée sur K et atteint sa borne inférieure.

Soit $\epsilon >0$ tel que : $\forall (x,y)\in K, H(x,y)>\epsilon $. Soit $g\in C(X,[0,1]^d)$ telle que $\| f-g\|\leq \frac{\epsilon}{4}.$ Montrons que $g\in D_K$.

Soient $(x,y)\in K$ et $i\in \mathbb{Z}$.
\begin{align*}
 |f(T^i(x))-f(T^i(y))|& \leq |f(T^i(x))-g(T^i(x))|+|g(T^i(x))-g(T^i(y))|+|g(T^i(y))-f(T^i(y))|& \\
 & \leq |g(T^i(x))-g(T^i(y))| +\frac{\epsilon}{2} &
\end{align*}
Donc :
\[ \frac{\epsilon}{2} \leq H(x,y)-\frac{\epsilon}{2} \leq sup_{i\in \mathbb{Z}}|g(T^i(x))-g(T^i(y))| \]
On en déduit que $g \in D_K$. Finalement, $D_K$ est ouvert dans $C(X,[0,1]^d)$.

\end{proof}

Le principal travail dans la preuve sera de trouver des voisinage compacts K tels que $D_K$ est dense. Alors on obtiendra une famille de compacts K recouvrant $\Omega \subset X\times X$. $X\times X$ est compact donc $\Omega$ est un espace de Lindelöf. On pourra donc recouvrir $\Omega$ par un ensemble dénombrable de compacts $K_n$ tels que $D_{K_n}$ est un ouvert dense de $C(X,[0,1]^d)$ pour tout $n\in \mathbb{N}$. Comme $D_\Omega=\bigcap_{n\in \mathbb{N}}D_{K_n}$, d'après le théorème de Baire, $D_\Omega$ sera dense dans $C(X,[0,1]^d)$ donc en particulier non vide. Ce qui permettra de conclure à l'existence d'un plongement.

\subsection{Autour du théorème de plongement de Jaworski}

Nous avons déjà vu, dans la section précédente, que si X est de dimension finie n, alors le système dynamique (X,T) se plonge dans le décalage $(([0,1]^{2n+1})^\mathbb{Z}, \sigma)$. On peut se demander si l'entier $2n+1$ est minimal. Dans le cas d'un système apériodique, Jaworski a montré dans \cite{Jaw} que (X,T) se plongeait dans $([0,1]^\mathbb{Z},\sigma)$. On remarque la similitude, dans l'énoncé et la démonstration, avec \ref{Menger}, qui est d'ailleurs utilisé dans la preuve. Il s'agit ici d'une première tentative pour trouver un équivalent de ce résultat aux plongements de systèmes dynamiques.

\begin{Notation}
Si (X,T) est un système dynamique, et $k\in \mathbb{N}$, on note $P_k$ l'ensemble des points périodiques de (X,T) de période inférieure ou égale à k, et $H_k$ l'ensemble des points périodiques de (X,T) de période k.
Enfin, on note P l'ensemble des points périodiques.
\end{Notation}

\begin{Lemme}\label{dist}
Supposons que (X,T) est de dimension finie n, et que $P_{6n}=\emptyset$. Soient x et y distincts dans X. Alors on peut trouver des indices $i_0,i_1,...,i_{2n}$ tels que les points $T^{i_0}(x),...,T^{i_{2n}}(x),T^{i_0}(y),...,T^{i_{2n}}(y)$ sont tous distincts.
\end{Lemme}

\begin{proof}
Si x et y ne sont pas dans la même orbite, on peut prendre les entiers $\{0,1,2,...,2n\}$ .Sinon, il existe $l\in \mathbb{Z}$ tel que $y=T^lx$. Si x n'est pas périodique, on peut prendre les entiers  $\{0,l+1,2(l+1),...,2n(l+1)\}$. Si x est périodique de période p, alors on construit les indices par récurrence. On peut prendre $i_0$ quelconque. Ensuite, supposons $i_0,i_1,...i_k$ déjà construits. Comme $p>6n$ et $k<2n$, on peut trouver un entier $i_{k+1}$ qui n'appartient pas à l'ensemble :
\[ \{I+p\mathbb{Z}\} \cup \{ I+l+p\mathbb{Z}\} \cup \{ I-l+p\mathbb{Z}\} \]
où $I=\{ i_0, i_1,...,i_k\}$.
\end{proof}

\begin{thm}
Soit X un espace compact métrisable de dimension finie, et T un homéomorphisme sur X sans point périodique. Alors le système dynamique (X,T) se plonge dans le décalage $([0,1]^d,\sigma )$.
\end{thm}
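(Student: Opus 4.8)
Le plan est de réutiliser la machinerie de catégorie de Baire déjà mise en place : il suffit de montrer que pour chaque couple $(x_0,y_0)\in\Omega$, il existe un voisinage compact $K\subset\Omega$ de $(x_0,y_0)$ tel que $D_K$ soit dense dans $C(X,[0,1])$. Le lemme qui précède assure alors que chacun de ces $D_K$ est ouvert, et comme $\Omega$ est de Lindelöf on le recouvre par une famille dénombrable de tels compacts $K_n$ ; le théorème de Baire donne que $D_\Omega=\bigcap_n D_{K_n}$ est dense, donc non vide, ce qui fournit le plongement recherché.

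Soit donc $(x_0,y_0)\in\Omega$ et $n=dim(X)$. Comme T n'a aucun point périodique, on a en particulier $P_{6n}=\emptyset$, de sorte que \ref{dist} s'applique : on obtient des indices $i_0,\dots,i_{2n}$ tels que les $4n+2$ points $T^{i_0}(x_0),\dots,T^{i_{2n}}(x_0),T^{i_0}(y_0),\dots,T^{i_{2n}}(y_0)$ soient deux à deux distincts. Par continuité des $T^{i_j}$, on peut alors choisir des voisinages ouverts disjoints $U\ni x_0$ et $V\ni y_0$ assez petits pour que les $4n+2$ ouverts $T^{i_0}(U),\dots,T^{i_{2n}}(U),T^{i_0}(V),\dots,T^{i_{2n}}(V)$ soient deux à deux disjoints. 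Quitte à rétrécir, on prend des compacts $K_1\subset U$, $K_2\subset V$ (donc $K_1\cap K_2=\emptyset$) et l'on pose $K=K_1\times K_2$ ; alors $K\subset\Omega$ est un voisinage compact de $(x_0,y_0)$.

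Le cœur de la preuve est la densité de $D_K$. Pour $f\in C(X,[0,1])$, posons $\Phi_f:z\mapsto\bigl(f(T^{i_0}z),\dots,f(T^{i_{2n}}z)\bigr)\in[0,1]^{2n+1}$ ; si $\Phi_f$ est injective sur le compact $K_1\cup K_2$, alors pour tout $(x,y)\in K$ on a $x\neq y$ (car $K_1\cap K_2=\emptyset$), donc $\Phi_f(x)\neq\Phi_f(y)$, ce qui fournit un indice $i_j\in\mathbb{Z}$ séparant x et y et montre $f\in D_K$. Il suffit donc de rendre $\Phi_f$ injective sur $K_1\cup K_2$, qui est fermé dans X donc de dimension $\leq n$ d'après \ref{dimfermé}. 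L'observation décisive est que, les $2(2n+1)$ ensembles $T^{i_j}(K_1)$ et $T^{i_j}(K_2)$ étant deux à deux disjoints, le choix de f sur leur réunion revient à choisir indépendamment les fonctions $z\mapsto f(T^{i_j}z)$ sur $K_1$ et sur $K_2$ : autrement dit $\Phi_f|_{K_1\cup K_2}$ peut être rendue égale à n'importe quelle application continue $\Psi:K_1\cup K_2\to[0,1]^{2n+1}$ voisine de $\Phi_g$, en imposant les valeurs de f sur les translatés et en la prolongeant à X par le théorème de Tietze sans augmenter $\Arrowvert f-g\Arrowvert_\infty$. On se ramène ainsi à un énoncé de type Menger--Nöbeling : comme $2n+1\geq 2\,dim(K_1\cup K_2)+1$, \ref{Menger2} fournit une application injective $\Psi$ arbitrairement proche de $\Phi_g$, d'où un $f\in D_K$ avec $\Arrowvert f-g\Arrowvert_\infty\leq\epsilon$.

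La principale difficulté est précisément cette étape de découplage : reconnaître que la contrainte « une seule fonction f » se dissout grâce à la disjonction des translatés $T^{i_j}(U)$ et $T^{i_j}(V)$, ce qui ramène le problème perturbatif contraint au théorème de plongement générique \ref{Menger2}. Les points techniques restants sont de routine : garder les valeurs dans $[0,1]$ lors du prolongement (quitte à comprimer d'abord g dans $[\epsilon,1-\epsilon]$ avant de perturber) et vérifier uniformément les disjonctions sur les compacts $K_1$ et $K_2$. On notera enfin que l'apériodicité n'intervient que pour garantir $P_{6n}=\emptyset$, et donc l'applicabilité de \ref{dist}.
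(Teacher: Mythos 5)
Votre proposition est correcte et suit essentiellement la même démarche que la preuve du mémoire : mêmes indices fournis par \ref{dist}, mêmes voisinages compacts à translatés $T^{i_j}$ deux à deux disjoints permettant le « découplage » des valeurs de $f$, même recours à \ref{Menger2} pour perturber $\Phi_f$ en une application injective sur $K_1\cup K_2$, puis prolongement par Tietze. Votre remarque explicite sur le maintien des valeurs dans $[0,1]$ (en comprimant $g$ dans $[\epsilon,1-\epsilon]$) comble d'ailleurs proprement un point que le texte original passe sous silence.
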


\begin{proof}
On va utiliser la méthode décrite dans la section précédente. Soit $(x_0,y_0)\in \Omega$, et $m= 2dim(X)+1$. Alors d'après \ref{dist}, comme (X,T) est apériodique, on peut trouver des entiers $i_1,i_2,..,i_m$ tels que les points $T^{i_1}(x_0),...,T^{i_m}(x_0),T^{i_1}(y_0),...,T^{i_m}(y_0)$ sont tous distincts. Alors on peut trouver deux voisinages compacts respectifs de $x_0$ et $y_0$, U et V, tels que $T^{i_1}(U),...,T^{i_m}(U),T^{i_1}(V),...,T^{i_m}(V)$ sont deux à deux disjoints. On pose alors $K=U\times V$.

Il suffit de montrer que pour tout K de cette forme, $D_K$ est dense dans $C(X,[0,1])$. Soit $\epsilon >0$ et $f\in  C(X,[0,1])$. On considère l'application $\phi$ définie par :
\begin{align*}
\phi : &U\cup V \longrightarrow \mathbb{R}^m\\
&x\longmapsto (f(T^{i_1}(x),...,f(T^{i_m}(x))
\end{align*}
$\phi$ est une application continue et $m\geq 2dim(X)+1\geq 2dim(U\cup V)+1$ donc d'après \ref{Menger2}, il existe un plongement $\psi :U\cup V \to \mathbb{R}^m$ tel que $\Arrowvert \phi -\psi \Arrowvert|_{\infty, U\cup V}<\epsilon$.

Alors on peut définir sur $Z=\bigcup_{i=1}^mT^i(U) \cup \bigcup_{i=1}^mT^i(V)$ une fonction h telle que :
\[ \forall z \in T^{i}(U)\cup T^{i}(V), h(z)=\psi(T^{-i}(z))|_i\]
Alors h est continue et vérifie : $\Arrowvert h -f|_Z \Arrowvert|_{\infty}<\epsilon$. D'après le théorème d'extension de Tietze, on peut prolonger h en une fonction $\tilde{f}\in C(X,[0,1])$, telle que $\Arrowvert \tilde{f} -f \Arrowvert_{\infty}<\epsilon$. Alors $\tilde{f}$ est dans $D_K$. En effet, si $(x,y)\in K$, par injectivté de $\psi$, il existe $k\leq m$ tel que $\psi(x)|_k \neq \psi(y)|_k$. Donc $\tilde{f}(T^{i_k}(x))\neq \tilde{f}(T^{i_k}(y))$.
Finalement on a prouvé que $D_K$ est dense dans $C(X,[0,1])$.

\end{proof}

On ne peut pas supprimer l'hypothèse de finitude dans le théorème de plongement de Jaworski : d'après \ref{minimal}, il existe un système dynamique minimal (X,T) tel que $mdim(X,T)>1$. \
Ce système ne se plonge pas dans $([0,1]^\mathbb{Z},\sigma)$ (sinon on aurait $mdim(X,T)\leq 1$), et est apériodique. En effet, si x est un point périodique de (X,T), son orbite est finie et dense dans X. Donc X est réduit à cette orbite. Donc (X,T) se plonge dans $([0,1]^\mathbb{Z},\sigma)$. C'est une contradiction.

Il a fallu attendre l'introduction de la moyenne dimension pour construire un tel contre-exemple (dû à E.Lindenstrauss et B.Weiss).

On ne peut pas non plus supprimer l'hypothèse d'apériodicité. Par exemple, considérons la sphère unité $\mathbb{S}^2$ dans $\mathbb{R}^3$, munie de la symétrie par rapport à l'équateur, $s$. Alors si $(\mathbb{S}^2,\tau)$ se plonge dans $([0,1]^\mathbb{Z},\sigma)$, l'ensemble de ses points fixes, qui est un cercle (l'équateur), se plonge dans l'ensemble des points fixes de $([0,1]^\mathbb{Z},\sigma)$, qui est homéomorphe à $[0,1]$. C'est impossible.

Cependant, il est possible d'affaiblir l'hypothèse d'apériodicité. On peut seulement supposer que $P_{6n}=\emptyset$, puisqu'on pourra toujours utiliser \ref{dist}.

\subsection{La conjecture de Lindenstrauss-Tsukamoto}

On a vu en introduction qu'on cherchait quand un système (X,T) pouvait se plonger dans un décalage $((|0,1]^d)^\mathbb{Z},\sigma)$, d étant le plus petit possible. On cherche en quelque sorte à étendre le théorème de Jaworski.

La moyenne dimension va nous fournir un outil pour distinguer entre des systèmes qui seront tous de dimension infinie. En revanche, un critère de la forme $P_n=\emptyset$ ne semble pas satisfaisant. En effet, la présence de points périodiques, quand elle n'est pas trop importante, peut ne pas créer d'obstruction à un plongement. Si on reprend l'exemple précédent de la sphère $(\mathbb{S}^2,\tau)$, malgré la présence de points fixes, on a un plongement de $(\mathbb{S}^2,\tau)$ dans le décalage $(([0,1]^2)^\mathbb{Z},\sigma)$ :
 \begin{align*}
 \mathbb{S}^2 &\longrightarrow (\mathbb{R}^2)^\mathbb{Z}\\
 (x_1,x_2,x_3)&\longmapsto (...,(x_1+x_3,x_2),(x_1-x_3,x_2),(x_1+x_3,x_2),(x_1-x_3,x_2),...)
 \end{align*}

Cela motive l'introduction de la dimension périodique.

\begin{Def}
On définit la dimension périodique de (X,T) par :
\[perdim(X,T)=\sup_{k\in \mathbb{N}}\frac{dim(P_k)}{k}\]
\end{Def}

\begin{Ex}
Un système apériodique est de dimension périodique nulle. En revanche, un système peut être de dimension périodique nulle et admettre des points périodiques.
\end{Ex}
\begin{Ex}
On a $perdim((|0,1]^d)^\mathbb{Z},\sigma)=d$. En effet, si $k\in \mathbb{N}$, en notant $H_i$ l'ensemble des points périodiques de période i, on a :
\[dim(P_k)=dim(H_1\cup H_2\cup ... H_k)=max_{0\leq i \leq k}(dim(H_i))=kd\]
\end{Ex}

\begin{propo}\label{sousperdim}
Si $\tilde{X}$ est un sous-ensemble fermé de X, invariant par T, alors si $\tilde{T}$ désigne la restriction de T à $\tilde{X}$, on a $perdim(\tilde{X},\tilde{T})\leq perdim(X,T)$.
\end{propo}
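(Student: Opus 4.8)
L'idée est de ramener directement l'énoncé à la monotonie de la dimension topologique pour les sous-ensembles fermés, c'est-à-dire à \ref{dimfermé}. Le point crucial est que le passage aux points périodiques de période bornée commute avec la restriction du système : les points périodiques du sous-système $(\tilde X,\tilde T)$ sont exactement les points périodiques de $(X,T)$ qui appartiennent à $\tilde X$.

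Je commencerais par fixer $k\in\mathbb{N}$ et noterais $\tilde P_k$ l'ensemble des points périodiques de période au plus $k$ du sous-système $(\tilde X,\tilde T)$, tandis que $P_k$ garde son sens pour $(X,T)$. Comme $\tilde X$ est invariant par $T$, on a $\tilde T^{j}=T^{j}|_{\tilde X}$ pour tout $j\in\mathbb{N}$, donc pour $x\in\tilde X$ l'égalité $\tilde T^{j}(x)=x$ équivaut à $T^{j}(x)=x$. On en déduit l'égalité ensembliste $\tilde P_k=P_k\cap\tilde X$.

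Ensuite, comme $\tilde X$ est fermé dans $X$, l'ensemble $\tilde P_k=P_k\cap\tilde X$ est fermé dans $P_k$ pour la topologie induite. En appliquant \ref{dimfermé} avec espace ambiant $P_k$ et sous-ensemble fermé $\tilde P_k$, on obtient $dim(\tilde P_k)\leq dim(P_k)$ ; le cas dégénéré $P_k=\emptyset$ force $\tilde P_k=\emptyset$ et l'inégalité est alors triviale. En divisant par $k$ puis en prenant la borne supérieure sur $k\in\mathbb{N}$, on conclut
\[ perdim(\tilde X,\tilde T)=\sup_{k\in\mathbb{N}}\frac{dim(\tilde P_k)}{k}\leq\sup_{k\in\mathbb{N}}\frac{dim(P_k)}{k}=perdim(X,T). \]

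Je ne prévois pas de réelle difficulté : l'argument se réduit à une application immédiate de \ref{dimfermé} une fois la mise en place effectuée. La seule étape méritant de l'attention est l'égalité ensembliste $\tilde P_k=P_k\cap\tilde X$, où intervient précisément l'invariance de $\tilde X$ : sans elle, $\tilde T$ ne serait pas bien définie et les périodes lues à l'intérieur du sous-système pourraient \emph{a priori} différer de celles du système ambiant.
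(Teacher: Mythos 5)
Votre démonstration est correcte et suit essentiellement la même voie que celle du mémoire : on y observe que les points périodiques de période au plus $k$ de $(\tilde X,\tilde T)$ sont aussi des points périodiques de $(X,T)$, puis on conclut par monotonie de la dimension. Vous explicitez simplement ce que le texte laisse implicite (l'égalité $\tilde P_k=P_k\cap\tilde X$, le caractère fermé et l'appel à \ref{dimfermé}), ce qui rend l'argument plus complet sans en changer la nature.
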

\begin{proof}
Soit $k\in \mathbb{N}$. Si x est un point périodique de période inférieure à k pour $(\tilde{X},\tilde{T})$, il l'est aussi pour $(X,T)$. Donc $perdim(\tilde{X},\tilde{T})\leq perdim(X,T)$.
\end{proof}

En particulier, une condition nécessaire pour que le système dynamique (X,T) se plonge dans $((|0,1]^d)^\mathbb{Z},\sigma)$ est : $perdim(X,T)\leq d$.

\begin{propo}
Soit $m\in \mathbb{N}$. On a $perdim(X,T^m)\leq m perdim(X,T)$
\end{propo}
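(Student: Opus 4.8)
Le plan est de ramener les points périodiques de $(X,T^m)$ à des points périodiques de $(X,T)$, puis d'exploiter la monotonie de la dimension topologique vis-à-vis des sous-ensembles fermés, établie en \ref{dimfermé}.

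D'abord, je noterais $P_k$ (resp. $P_k'$) l'ensemble des points périodiques de période inférieure ou égale à $k$ pour le système $(X,T)$ (resp. $(X,T^m)$). L'observation clef est l'inclusion $P_k'\subset P_{mk}$. En effet, si $x\in P_k'$, il existe un entier $1\leq j\leq k$ tel que $(T^m)^j(x)=x$, c'est-à-dire $T^{mj}(x)=x$ ; comme $mj\leq mk$, le point $x$ est périodique pour $T$ de période au plus $mk$, donc $x\in P_{mk}$.

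Ensuite, je remarquerais que ces ensembles sont fermés : chaque $P_k$ s'écrit comme l'union finie des $\{x\in X\mid T^j(x)=x\}$ pour $1\leq j\leq k$, qui sont fermés par continuité de $T$. Ainsi $P_k'$ est un fermé de $P_{mk}$, et d'après \ref{dimfermé} on obtient $dim(P_k')\leq dim(P_{mk})$.

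Enfin, il reste à diviser par $k$ et à réindexer : pour tout $k\in\mathbb{N}$,
\[ \frac{dim(P_k')}{k}\leq \frac{dim(P_{mk})}{k}=m\cdot\frac{dim(P_{mk})}{mk}\leq m\cdot perdim(X,T). \]
En prenant le supremum sur $k$, on conclut $perdim(X,T^m)\leq m\, perdim(X,T)$. L'étape la plus subtile — bien qu'elle ne présente pas de réelle difficulté — est l'inclusion des ensembles de points périodiques accompagnée du changement d'indice $k\mapsto mk$, qui est précisément ce qui fait apparaître le facteur $m$ ; tout le reste n'est qu'une application directe de \ref{dimfermé}.
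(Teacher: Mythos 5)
Votre preuve est correcte et suit essentiellement la même démarche que celle du mémoire : ramener les points périodiques de $(X,T^m)$ à ceux de $(X,T)$ via l'inclusion $P_k'\subset P_{mk}$, puis conclure par le changement d'indice $k\mapsto mk$ qui fait apparaître le facteur $m$. Vous explicitez en prime deux points que le texte laisse implicites, à savoir la fermeture des ensembles de points périodiques et le recours à \ref{dimfermé} pour la monotonie de la dimension, ce qui rend l'argument plus complet sans en changer la nature.
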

\begin{proof}
Soit $k\in \mathbb{N}$. Alors si x est un point périodique de période k pour $T^m$, c'est un point périodique de période mk pour T. On a donc $dim(P_k)\leq m\times \sup_{i\geq 0}\frac{dim(P_i)}{i}$ . D'où finalement $perdim(X,T^m)\leq m perdim(X,T)$.
\end{proof}

La conjecture de Lindenstrauss-Tsukamoto s'énonce ainsi : Si $mdim(X,T)<\frac{d}{2}$ et $perdim(X,T)<\frac{d}{2}$, alors le système dynamique (X,T) se plonge dans le décalage $(([0,1]^d)^\mathbb{Z},\sigma)$.

Le théorème de Jaworski entre dans le cadre de la conjecture, avec d=1, car la moyenne dimension et la dimension périodique sont tous les deux nuls. Dans la suite, on présente différents résultats qui sont tous des cas particuliers de la conjecture. On utilise à chaque fois la méthode décrite dans la section 3.1. La difficulté est toujours de montrer la densité des espaces considérés : chaque démonstration est précédée d'un lemme d'approximation. Quelques lemmes techniques, nécessaires à leur démonstration, ont été placés dans la section finale.

\subsection{Plongement de l'ensemble des points périodiques}

On considère ici une première méthode : dans \cite{YG}, on parvient à étendre la démonstration de Jaworski au cas où $perdim(X,T)<\frac{d}{2}$, en plongeant l'ensemble des points périodique dans le décalage $(([0,1]^d)^\mathbb{N},\sigma)$.

On rappelle la définition suivante :

\begin{Def}
Soit $\{ v_1, v_2,..., v_r\}$ une famille de vecteurs de $\mathbb{R}^m$. Alors on dit qu'elle est affinement indépendante si $\{v_2-v_1,v_3-v_1,...,v_{r}-v_1\}$ est une famille libre. Cela équivaut à ce que si $\sum_{i=1}^r \lambda_i v_i=0$ et $\sum_{i=1}^r\lambda_i=0$, alors tous les coefficients $\lambda_i$ sont nuls.
\end{Def}

\begin{Notation}
Soient $n_1\geq n_2$ deux entiers et v un vecteur de taille $n_2$. Alors on note $v^{*n_1}$ le vecteur de taille $n_1$ défini par :
\[ \forall 1\leq k\leq n_1, v^{*n_1}|_k=v|_{k \:mod\: n_2} \]
\end{Notation}

\begin{Lemme}\label{approx2}
Soient $n_1, n_2 \in \mathbb{N}$ tels que $n_1\geq n_2$, et $R_1, R_2 \subset X$ des ensembles fermés disjoints. Pour $i=1,2$, soit $\alpha_i$ un recouvrement ouvert de $R_i$ tel que $ord(\alpha_i)<\frac{dn_i}{2}$.
Pour tout $U\in \alpha_i$, on se donne un point $q_U\in U$ et un $n_i$-uplet $v_U \in ([0,1]^d)^{n_i}$. Soit $\epsilon >0$.
Alors on peut trouver deux fonctions continues $F_i:X\to ([0,1]^d)^{n_i}$ pour $i=1,2$ telles que :
\begin{enumerate}
\item $\forall U\in \alpha_i ,\Arrowvert F_i(q_U)-v_U \Arrowvert _\infty <\epsilon$
\item $\forall x\in R_i, F(x) \in Conv(\{ F_i(q_U) | x\in U \}$
\item si $x_1\in R_1$ et $x_2\in R_2$, $F_1(x_1)\neq F_2(x_2)^{*n_1}$
\end{enumerate}
\end{Lemme}

\begin{proof}
On va définir les fonctions $F_i$ ainsi :
\[ \forall x\in X, F_i(x) =\sum_{U\in \alpha_i}\rho_U(x)\vec{v_U} \]
où $\{ \rho_U\}_{U\in \alpha_i}$ est une partition de l'unité subordonnée à $\alpha_i$ telle que pour tout $U\in \alpha_i$, $\rho_U(q_U)=1$.
Pour $U\in \alpha_2$, $\vec{v_U}=v_U$ et pour $U\in \alpha_1$, on définira $\vec{v_U}$ par la suite. La propriété 2. sera alors immédiatement vérifiée.

Quant à la propriété 3., elle s'écrit :
\[ \text{si }x\in R_1 \text{ et } y\in R_2, \sum_{U\in \alpha_1^x}\rho_U(x)\vec{v_U}- \sum_{U\in \alpha_2^y}\rho_U(y)\vec{v_U}^{*n_1}\neq 0 \]
où on a posé :
\[ \alpha_i^x=\{ U\in \alpha_i \: | \: \rho_U(x)>0 \} \]

On a affaire à une combinaison d'au plus $ord(\alpha_1) + ord(\alpha_2)+2$ vecteurs. Or $ord(\alpha_1)\leq \frac{n_1d}{2}-\frac{1}{2}$ et $ord(\alpha_2)\leq \frac{n_2d}{2}-\frac{1}{2}\leq \frac{n_1d}{2}-\frac{1}{2}$. Donc il s'agit d'au plus $n_1d+1$ vecteurs.

Fixons dans un premier temps $(x,y)\in (R_1,R_2)$. Soit E le sous-espace de $([0,1]^d)^{n_1}$ engendré par la famille $\{ \vec{v_U}^{*n_1}\: | \: U\in \alpha_2^y \}$. Alors $dim(E)\leq card (\alpha_2^y) \leq \frac{n_2d}{2}+\frac{1}{2}$. On considère deux cas.

Supposons que $dim(E)=\frac{n_2d}{2}+\frac{1}{2}$. Alors la famille $\{ \vec{v_U}^{*n_1}\: | \: U\in \alpha_2^y \}$ est libre. Donc d'après \ref{indep} (cas 2) dans la section 3.6, pour presque tout choix de $\{ \vec{v_U}\}_{U\in \alpha_1^x}$ dans $([0,1]^{dn_1})^{card(\alpha_1^x})$, les vecteurs de $\{ \vec{v_U}^{*n_1}\: | \: U\in \alpha_2^y \} \cup \{ \vec{v_U}\}_{U\in \alpha_1^x}$ sont affinement indépendants. Supposons alors :
\[ \sum_{U\in \alpha_1^x}\rho_U(x)\vec{v_U}- \sum_{U\in \alpha_2^y}\rho_U(y)(\vec{v_U}|_{k \: mod\: n_2})_{k=1}^{n_1}= 0 \]
Comme les sommes $\sum_{U\in \alpha_1^x}\rho_U(x)$ et $\sum_{U\in \alpha_1^y}\rho_U(y)$ sont égales à 1, la somme des coefficients en jeu est nulle, or ces coefficients sont tous différents de 0, c'est une contradiction.

Dans l'autre cas, si $dim(E)<\frac{n_2d}{2}+\frac{1}{2}$, alors d'après le lemme précédent, pour presque tout tout choix de $\{ \vec{v_U}\}_{U\in \alpha_1^x}$ dans $([0,1]^{dn_1})^{card(\alpha_1^x})$, les espaces $Vect(\{ \vec{v_U}^{*n_1}\: | \: U\in \alpha_2^y \})$ et $ Vect(\{ \vec{v_U}\}_{U\in \alpha_1^x})$ sont complémentaires, et la famille $\{ \vec{v_U}\}_{U\in \alpha_1^x}$ est libre. Supposons alors :
\[ \sum_{U\in \alpha_1^x}\rho_U(x)\vec{v_U}= \sum_{U\in \alpha_2^y}\rho_U(y)\vec{v_U}^{*n_1} \]
Comme $Vect(\{ \vec{v_U}^{*n_1}\: | \: U\in \alpha_2^y \})$ et $ Vect(\{ \vec{v_U}\}_{U\in \alpha_1^x})$ sont complémentaires, on obtient :
\[ \sum_{U\in \alpha_1^x}\rho_U(x)\vec{v_U}=0\]
Comme $\{ \vec{v_U}\}_{U\in \alpha_1^x}$ est libre, tous les coefficients en jeu dans cette combinaison linéaire sont nuls, ce qui est une contradiction.

Finalement, comme il existe un nombre fini de familles de la forme $\alpha_1^x$ et $\alpha_2^y$, on peut choisir la famille $\{ \vec{v_U}\}_{U\in \alpha_1^x}$ dans un ensemble de complémentaire négligeable dans $([0,1]^{dn_1})^{card(\alpha_1^x)}$, donc de façon à vérifier la propriété 1.
\end{proof}

\begin{Lemme}\label{approx3}
Soient $n,l\in \mathbb{N}$ tels que $1\leq l\leq n-1$. Soit R un sous-ensemble fermé de X tel que $\forall i\in \{ 1,2,...,n-1\}, R\cap T^iR=\emptyset$. Soit $\alpha_i$ un recouvrement ouvert de R tel que $ord(\alpha)<\frac{dn}{2}$.Soit $\epsilon >0$. Pour tout $U\in \alpha$, on se donne un point $q_U\in U$ et un N-uplet $v_U \in ([0,1]^d)^n$.
Alors on peut trouver une fonction continue $F:R\to ([0,1]^d)^n$ telle que :
\begin{enumerate}
\item $\forall U\in \alpha ,\Arrowvert F(q_U)-v_U \Arrowvert _\infty< \epsilon$
\item $\forall x\in R, F(x) \in Conv(\{ F(q_U) | x\in U \}$
\item si $x,y \in R$, $F(x)\neq (F(y))^{\bullet l}$
où pour un vecteur v de taille $n>l$, le vecteur $v^{\bullet l}$ de taille n est définit par : $v^{\bullet l}|_k=v_{k+l\: mod\: n}$
\end{enumerate}
\end{Lemme}

\begin{proof}

On définit la fonction F ainsi :
\[ \forall x\in R, F(x) =\sum_{U\in \alpha_i}\rho_U(x)\vec{v_U} \]
où $\{ \rho_U\}_{U\in \alpha_i}$ est une partition de l'unité subordonnée à $\alpha$ telle que pour tout $U\in \alpha_i$, $\rho_U(q_U)=1$.
On définira $\{ \vec{v_U}\}_{U\in \alpha}$ par la suite. La propriété 2. sera alors immédiatement vérifiée.

Pour $x\in R$, on pose : $\alpha_x =\{ U\in \alpha \: | \: \rho_U(x)>0\}$.
Alors la propriété 3 pour x s'écrit :

\[  \forall i\in \{ 0,1,...,nd-1\}, \sum_{U\in \alpha_x}\rho_U(x)\vec{v_U}|_i \neq \sum_{U\in \alpha_y}\rho_U(y)\vec{v_U}|_{i+dl\: mod\: nd} \]

Pour tout U dans $\alpha$, on définit un vecteur d'entiers $V_U=(v_U^1, v_U^2, ..., v_U^{nd})\in \mathbb{N}^{nd}$, de manière à ce que les indices $v_U^i$ soient tous distincts (U décrivant $\alpha$ et $i$ décrivant $\{1,...,nd\}$. Soit alors M la matrice dont les colonnes sont les vecteurs $V_U$ pour $U\in \alpha_x$ et $V_U^{\bullet dl}$ pour $U\in \alpha_y$. M a nd lignes et au plus nd+1 colonnes car $D(ord(\alpha)+1)\leq nd+1$. Comme $1\leq l\leq n-1$, il n'y a pas de coefficients égaux sur une même ligne ou une même colonne.

Supposons que le nombre de colonnes, k,  est inférieur ou égal à $nd$. Alors, en ne gardant que les k premières lignes de M, on obtient une matrice carrée à laquelle on peut appliquer \ref{matrice}. Donc pour presque tout choix de vecteurs $(v_U)_{U\in \alpha}$, la famille $\{ (v_U)_{U\in \alpha_x}\cup (v_U^{\bullet l})_{U\in \alpha_y}\}$ est une famille libre.

Supposons maintenant que $k=nd+1$. $nd+1=2(ord(\alpha)+1)$ donc nd+1 est pair. En appliquant \ref{matrice2} de la section 3.6. à $i=\frac{nd+1}{2}$, on obtient que pour presque tout choix de vecteurs $(v_U)_{U\in \alpha}$, la vecteurs de $\{ (v_U)_{U\in \alpha_x}\cup (v_U^{\bullet l})_{U\in \alpha_y}\}$ sont affinement indépendants.

Finalement, dans les deux cas, on peut conclure comme dans \ref{approx2}.

\end{proof}

On rappelle qu'on note P l'ensemble des points périodiques, et pour $n\in \mathbb{N}$, $H_n$ l'ensemble des points périodiques de période n.

\begin{thm}
Supposons que $perdim(X,T)<\frac{d}{2}$. Alors l'espace $D_{(P\times P)\backslash \Delta }$ est dense dans $C(X,[0,1]^d)$.
\end{thm}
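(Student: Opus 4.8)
\emph{Esquisse de preuve.} On applique la méthode exposée en~3.1. On sait déjà que $D_K$ est ouvert dans $C(X,[0,1]^d)$ pour tout compact $K\subset\Omega$ ; il suffit donc de recouvrir $(P\times P)\backslash\Delta$ par une famille dénombrable de compacts $K_n\subset\Omega$ tels que chaque $D_{K_n}$ soit dense. En effet, $X\times X$ étant compact, $\Omega$ est de Lindelöf, et on a alors $D_{(P\times P)\backslash\Delta}=\bigcap_n D_{K_n}$, intersection dénombrable d'ouverts denses : le théorème de Baire permet de conclure. Tout le travail consiste donc à montrer qu'au voisinage de chaque couple $(x_0,y_0)$ de points périodiques distincts, on peut choisir un compact $K$ pour lequel $D_K$ est dense.

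Fixons un tel couple et distinguons deux cas. Si $x_0$ et $y_0$ sont sur des orbites distinctes, de périodes respectives $n_1$ et $n_2$, on choisit des voisinages fermés disjoints $R_1\ni x_0$ et $R_2\ni y_0$ contenus dans les ensembles de points périodiques correspondants, et on utilisera \ref{approx2}. Si $x_0$ et $y_0$ sont sur une même orbite, de période commune $n$, on écrit $y_0=T^l x_0$ avec $1\leq l\leq n-1$, et on choisit un voisinage fermé $R\ni x_0$ assez petit pour que $R\cap T^iR=\emptyset$ pour $1\leq i\leq n-1$ ; on prend alors $K\subset (R\times T^lR)\cap(P\times P)$, de sorte que tout $(x,y)\in K$ s'écrive $x\in R$, $y=T^ly'$ avec $y'\in R$, et on utilisera \ref{approx3}. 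C'est ici qu'intervient l'hypothèse $perdim(X,T)<\frac d2$ : elle donne $dim(P_n)\leq n\,perdim(X,T)<\frac{nd}2$ pour tout $n$, donc, d'après \ref{maille}, des recouvrements ouverts finis de $R$ (resp. $R_1$, $R_2$) de maille arbitrairement petite et d'ordre strictement inférieur à $\frac{dn}2$ (resp. $\frac{dn_i}2$), c'est-à-dire exactement les recouvrements $\alpha$ (resp. $\alpha_i$) requis par les deux lemmes d'approximation.

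Montrons, dans chaque cas, que $D_K$ est dense. Soient $f\in C(X,[0,1]^d)$ et $\epsilon>0$. On fixe un recouvrement $\alpha$ (ou $\alpha_1,\alpha_2$) comme ci-dessus, de maille assez petite pour que, par continuité uniforme de $f$, les fonctions $f\circ T^j$ pour $0\leq j<n$ varient de moins de $\epsilon$ sur chaque ouvert. Pour $U\in\alpha$, on choisit $q_U\in U$ et on pose $v_U=(f(q_U),f(Tq_U),\dots,f(T^{n-1}q_U))$, l'empreinte de $f$ le long de l'orbite de $q_U$. D'après \ref{approx2} et \ref{approx3}, on obtient des fonctions $F$ (ou $F_1,F_2$) à valeurs dans $([0,1]^d)^{n}$ vérifiant la propriété d'approximation, la propriété de convexité et la propriété de séparation, laquelle s'écrit $F(x)\neq F(y')^{\bullet l}$ (cas d'une même orbite) ou $F_1(x_1)\neq F_2(x_2)^{*n_1}$ (cas d'orbites distinctes). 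La propriété de convexité et la petitesse de la maille assurent que $F(x)$ reste à distance $<\epsilon$ de l'empreinte de $f$ en $x$ pour tout $x$. On définit enfin $g$ sur la réunion des translatés $T^jR$, que l'on s'arrange pour rendre deux à deux disjoints, en posant $g(T^jx)=F(x)|_j$, puis on prolonge $g$ à $X$ par le théorème d'extension de Tietze en la gardant à distance $<\epsilon$ de $f$. La propriété de séparation se traduit alors exactement par $I_g(x)\neq I_g(y)$ pour $(x,y)\in K$, d'où $g\in D_K$ avec $\|f-g\|_\infty<\epsilon$.

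L'essentiel de la difficulté réside dans \ref{approx2} et \ref{approx3}, déjà établis, dont la dernière propriété encode la séparation des suites périodiques via les opérations $v^{*n_1}$ et $v^{\bullet l}$. Le reste est surtout organisationnel : il faut gérer la combinatoire des périodes et des orbites, garantir quitte à rétrécir les voisinages que les translatés servant à définir $g$ sont bien deux à deux disjoints, et vérifier que la réunion, sur tous les $n$ (resp. tous les couples $(n_1,n_2)$ et décalages $l$), de ces familles de compacts recouvre effectivement $(P\times P)\backslash\Delta$.
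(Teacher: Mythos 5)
Votre esquisse est correcte et suit pour l'essentiel la démonstration du mémoire : même réduction par le théorème de Baire et la propriété de Lindelöf, même dichotomie (orbites distinctes traitées par \ref{approx2}, même orbite avec $y=T^lx$ traitée par \ref{approx3}), même usage de $perdim(X,T)<\frac{d}{2}$ pour obtenir via \ref{maille} des recouvrements d'ordre $<\frac{dn_i}{2}$, mêmes empreintes $v_U=(f(T^kq_U))_k$, et même construction de $g$ sur les translatés disjoints suivie d'une extension de Tietze, la séparation se lisant sur $v^{*n_1}$ et $v^{\bullet l}$. La seule différence, cosmétique, est que le mémoire découpe d'abord selon les périodes ($H_n\times H_m$ avec $n\neq m$, contre $(H_n\times H_n)\backslash\Delta$) avant de distinguer les orbites, alors que vous distinguez directement selon les orbites, ce qui regroupe les deux sous-cas relevant de \ref{approx2}.
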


\begin{proof}
On va utiliser la méthode décrite dans la section 3.1. On a :

\[ (P\times P)\backslash \Delta = (\bigcup_{n\in \mathbb{N}} H_n\times H_n)\backslash \Delta \cup (\bigcup_{n\neq m} H_n\times H_m)\]

$(\bigcup_{n\in \mathbb{N}} H_n\times H_n)\backslash \Delta$ et $(\bigcup_{n\neq m} H_n\times H_m)$ sont deux espaces de Lindelöf.

Soit $n\in \mathbb{N}$ et $x_n\in H_n$. On peut trouver un voisinage $U_x$ de $x_n$ dans $H_n$ tel que $\bar{U_x} \subset H_n$ et $\forall i \in \{ 1,2,...,n-1\}, \bar{U_x}\cap T^i\bar{U_x}=\emptyset$. On peut donc recouvrir l'espace $(\bigcup_{n\neq m} H_n\times H_m)$ par des compacts de la forme $K=\bar{U}_{x_n}\times  \bar{U}_{y_m}$.

Soit maintenant $(x,y)\in (H_n\times H_n)\backslash \Delta$. Alors on définit un voisinage compact $K$ de la forme $\bar{V}_x\times \bar{V}_y$, où $V_x$ et $V_y$ sont des voisinages respectifs de x et de y dans $H_n$, vérifiant :
\begin{enumerate}
\item $\bar{V}_x \subset H_n$ et $\bar{V}_y \subset H_n$
\item $\bar{V}_x\times \bar{V}_y \subset (P\times P)\backslash \Delta$
\item si $S, Tx,..., T^{n-1}x,y,Ty,...,T^{n-1}y$ sont deux à deux distincts, $V_x$ et $V_y$ doivent être choisis tels que $\bar{V}_x,T\bar{V}_x,.., T^{n-1}\bar{V}_x,\bar{V}_y, T\bar{V}_y,...,T^{n-1}\bar{V}_y$ sont deux à deux disjoints.
\item si en revanche il existe $1\leq k\leq n-1$ tel que $y=T^kx$, on choisit $V_x$ tel que $\forall i\in \{1, 2,...,n-1\}, \bar{V_x}\cap T^i\bar{V_x}=\emptyset$ et on pose $V_y=T^kV_x$.
\end{enumerate}
On obtient un recouvrement de $(\bigcup_{n\in \mathbb{N}} H_n\times H_n)\backslash \Delta$ par des compacts de la forme $K=\bar{V}_x\times \bar{V}_y$.

Il reste à montrer que tous les ensembles $D_K$ considérés sont denses dans $C(X,[0,1]^d)$. Supposons que K s'écrive $R_1\times R_2$ où $\bar{R_1} \subset H_{n_1}$ et $\bar{R_2} \subset H_{n_2}$.

Soient $\tilde{f}\in C(X,[0,1]^d)$ et $\epsilon>0$. Comme $perdim(X,T)<\frac{d}{2}$, on a $\frac{dim(\bar{R_1})}{n_1}<\frac{d}{2}$ et $\frac{dim(\bar{R_2})}{n_2}<\frac{d}{2}$. Pour $i=1,2$, on peut donc choisir $\alpha_i$ un recouvrement ouvert de $R_i$ tel que $ord(\alpha_i)<\frac{dn_i}{2}$, et $max_{U\in \alpha_i; 0\leq k \leq n_i-1}diam(\tilde{f}(T^kU))< \frac{\epsilon}{2}$.
Pour tout $U\in \alpha_i$, on se donne un point $q_U\in U$ et on définit $v_U=(\tilde{f}(T^kq_U))_{k=0}^{n_i-1}$.

On distingue alors deux cas.

Dans le premier cas, on suppose que $\bar{R_1}, T\bar{R_1}, ..., T^{n_1-1}\bar{R_1}, \bar{R_2}, T\bar{R_2}, ..., T^{n_2-1}\bar{R_2}$ sont deux à deux disjoints. Cela correspond à la propriété 3 dans le choix d'un voisinage de $(x,y)\in (H_n\times H_n)\backslash \Delta$ et au cas où K est un voisinage de $(x,y)\in (\bigcup_{n\neq m} H_n\times H_m)$ (en effet, $\bigcup_{i=1}^{n_1}T^{i}\bar{R_1} \subset H_{n_1}$, $\bigcup_{i=1}^{n_2}T^{i}\bar{R_2} \subset H_{n_2}$ et $H_{n_1}$ et $H_{n_2}$ sont disjoints ). Alors en appliquant \ref{approx2} à $R_1$ et $R_2$, on obtient deux fonctions continues $F_1$ et $F_2$.

On peut alors définir une fonction $f'$ sur $Z=\bigcup_{k=0}^{n_1-1}T^k\bar{R_1}\cup \bigcup_{k=0}^{n_2-1}T^k\bar{R_2}$ en posant :
\[  \forall z\in \bar{R_i}, \forall 0\leq k\leq n_i-1, f'(T^kz)=F_i(z)|_k   \]
D'après les propriétés 1 et 2 dans \ref{approx2}, on obtient $\Arrowvert \tilde{f}|_Z-f'\Arrowvert < \epsilon$. D'après le théorème d'extension de Tietze, on peut prolonger f' en une fonction $f\in C(X,[0,1]^d)$ telle que
$\Arrowvert \tilde{f}-f\Arrowvert < \epsilon$. Alors $f$ appartient à $D_K$ . En effet, supposons qu'il existe $(x,y)\in K$ tel que $\forall a\in \mathbb{Z}, f(T^ax)=f(T^ay)$. Alors en particulier $(f(x),..., f(T^{n_1-1}x))=(f(y),..., f(T^{n_1-1}y))$. On remarque que pour $n_1> i\geq n_2, f(T^iy)=  F_2(y)|_{i \: mod\: n_2}$. En effet,si on pose $k=i$ mod $n_2$, comme $y\in \bar{R_2}\subset H_2$, on a $T^iy=T^ky$. Donc $F_1(x)=(F_2(y)|_{k \: mod\: m})_{k=1}^{n_1}$, ce qui est impossible.

Dans l'autre cas, K est un voisinage de $(x,y)\in (H_n\times H_n)\backslash \Delta$ vérifiant la propriété 4. On pose $n=n_1=n_2$. $\bar{R_1}, T\bar{R_1}, ..., T^{n_1-1}\bar{R_1}$ sont deux à deux disjoints mais il existe $1\leq l\leq n-1$ tel que $\bar{R_2}=T^l\bar{R_1}$. On peut appliquer \ref{approx3} à $R_1$. On obtient une fonction continue F. On procède de même que dans le premier cas : on définit une fonction $f'$ sur $Z=\bigcup_{k=0}^{n-1}T^k\bar{R_1}$ par :
\[  \forall z\in \bar{R_1}, \forall 0\leq k\leq n-1, f'(T^kz)=F(z)|_k   \]
Et on prolonge cette fonction $f'$ en une fonction $f\in C(X,[0,1]^d)$ telle que $\Arrowvert \tilde{f}-f\Arrowvert < \epsilon$. Supposons qu'il existe $(x,y)\in K$ tel que $\forall a\in \mathbb{Z}, f(T^ax)=f(T^ay)$. Alors en particulier $(f(x),..., f(T^{n-1}x))=(f(y),..., f(T^{n-1}y))$, donc $F(x)= (F(y))^{\bullet l}$ , ce qui est impossible.

\end{proof}

Ici on parvient ainsi à démontrer la conjecture de Lindenstrauss-Tsukamoto dans le cas où X est de dimension finie.

\begin{Lemme}\label{approx4}
Soient N et n dans $\mathbb{N}$. Soit $\alpha$ un recouvrement ouvert de X tel que $ord(\alpha)+1\leq (N-1-n)\frac{d}{2}$.Soit $\epsilon >0$. Pour tout $U\in \alpha$, on se donne un point $q_U\in U$ et un N-uplet $v_U \in ([0,1]^d)^N$.
Alors on peut trouver une fonction continue $F:X\to ([0,1]^d)^N$ telle que :
\begin{enumerate}
\item $\forall U\in \alpha ,\Arrowvert F(q_U)-v_U \Arrowvert _\infty< \epsilon$
\item $\forall x\in X, F(x) \in Conv(\{ F_i(q_U) | x\in U \}$
\item si $x\in X$, $F(x)|_1^{N-1}\notin V_{N-1}^n$, où
\[ V_{N-1}^n=\{z\in ([0,1]^d)^{N-1}\: | \: \forall 0 \leq a,b\leq N, (a=b\: mod\: n) \Rightarrow z_a=z_b\}\]

\end{enumerate}
\end{Lemme}

\begin{proof}
On définit la fonction F ainsi :
\[ \forall x\in X, F(x) =\sum_{U\in \alpha}\rho_U(x)\vec{v_U} \]
où $\{ \rho_U\}_{U\in \alpha}$ est une partition de l'unité subordonnée à $\alpha$ telle que pour tout $U\in \alpha$, $\rho_U(q_U)=1$.
On définira $\{ \vec{v_U}\}_{U\in \alpha}$ par la suite. La propriété 2. sera alors immédiatement vérifiée.

Soit $x\in X$. On pose : $\alpha_x =\{ U\in \alpha \: | \: \rho_U(x)>0\}$.
Alors la propriété 3 pour x s'écrit :

\[  \sum_{U\in \alpha_x}(\rho_U(x)\vec{v_U}|_1^{N-1})\notin V_{N-1}^n \]

Il s'agit d'une combinaison linéaire d'au plus $ord(\alpha)+1$ vecteurs. On va utiliser le cas 1. dans \ref{indep} de la section 3.6., avec $r\leq ord(\alpha)+1$ et $s=nd$ car $dim(V_{N-1}^n)=nd$. Alors comme $ord(\alpha)+1+nd\leq (N-1)d$, pour presque tout choix de $\{ \vec{v_U}\}_{U\in \alpha}$ dans $(([0,1]^d)^{N-1})^{ord(\alpha)+1}$, les espaces $Vect(\{ \vec{v_U}\}_{U\in \alpha})$ et $V_{N-1}^n$ sont complémentaires. Comme ses coefficients sont non nuls, la combinaison $\sum_{U\in \alpha_x}(\rho_U(x)\vec{v_U}|_1^{N-1})$ est non nulle donc ne peut pas être dans $V_{N-1}^n$.

Comme il y a un nombre fini d'ensembles de la forme $\alpha_x$, on en déduit qu'on peut choisir la famille $\{ \vec{v_U}\}_{U\in \alpha}$ de manière à ce qu'elle vérifie la propriété 1.

\end{proof}

\begin{thm}
Supposons que X est de dimension finie n, et $perdim(X,T)<\frac{d}{2}$. Alors le système dynamique (X,T) se plonge dans le décalage $(([0,1]^d)^\mathbb{Z},\sigma)$.
\end{thm}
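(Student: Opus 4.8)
\emph{Proposition de preuve.} On suit la méthode de la section 3.1. Comme $dim(X)=n<\infty$, on a $mdim(X,T)=0<\frac d2$, de sorte qu'il ne reste qu'à exploiter l'hypothèse $perdim(X,T)<\frac d2$ : on est bien dans un cas particulier de la conjecture. Il suffit de montrer que $D_\Omega$ est dense, donc non vide, dans $C(X,[0,1]^d)$, où $\Omega=(X\times X)\setminus\Delta$ ; toute $f\in D_\Omega$ fournira alors le plongement $I_f$. Puisque $\Omega$ est de Lindelöf et que chaque $D_K$ est ouvert, il suffit de recouvrir $\Omega$ par une famille dénombrable de compacts $K$ avec $D_K$ dense, puis d'appliquer le théorème de Baire. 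Je décompose
\[ \Omega=\big[(P\times P)\setminus\Delta\big]\ \cup\ B, \]
où $B$ est l'ensemble des couples $(x,y)$ dont au moins une coordonnée est apériodique. La région $(P\times P)\setminus\Delta$ est déjà acquise : le théorème établissant la densité de $D_{(P\times P)\setminus\Delta}$, qui n'utilise que $perdim(X,T)<\frac d2$, en donne précisément un recouvrement par des compacts $K$ tels que $D_K$ soit dense.

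Il reste à recouvrir $B$, et c'est ici qu'intervient la dimension finie via \ref{approx4}. Soit $(x_0,y_0)\in B$, avec par exemple $x_0$ apériodique, et soit $k$ la période à « casser » : la période de $y_0$ si $y_0$ est périodique, ou $k=l$ si $y_0=T^lx_0$ appartient à l'orbite de $x_0$. Comme $x_0$ est apériodique et $P_{N-1}$ fermé, je choisis un voisinage compact $\bar U$ de $x_0$ disjoint de $P_{N-1}$ et assez petit pour que $\bar U,T\bar U,\dots,T^{N-1}\bar U$ soient deux à deux disjoints. Puisque $dim(X)=n$, \ref{maille} fournit un recouvrement ouvert $\alpha$ d'ordre $n$ et de maille arbitrairement petite ; pour $N$ assez grand, $ord(\alpha)+1=n+1\leq(N-1-k)\frac d2$, et \ref{approx4} s'applique avec la période $k$. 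La fonction $F$ obtenue assure que le segment d'orbite $(f(Tx),\dots,f(T^{N-1}x))$ n'est pas $k$-périodique ; en posant $f(T^jz)=F(z)|_j$ le long des segments d'orbite — bien défini grâce à la disjonction — puis en prolongeant par le théorème de Tietze tout en restant $\epsilon$-proche de la fonction à approcher, on sépare $x_0$ des points $k$-périodiques, dont le segment d'orbite est au contraire $k$-périodique, et de même de $T^kx_0$ sur sa propre orbite. Lorsque $x_0$ et $y_0$ sont deux points apériodiques d'orbites distinctes, j'emploie plutôt des indices $i_1,\dots,i_m$ et des voisinages rendant les segments d'orbite disjoints, et je conclus par un argument de type Menger (\ref{Menger2}) avec $m$ tel que $md\geq 2n+1$, comme dans la preuve apériodique précédente.

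Le point délicat est toute cette seconde partie. Pour un compact $K$ voisin d'un couple $(x_0,y_0)$ à coordonnée apériodique, il faut produire une approximation $f$ séparant simultanément $x_0$ des points périodiques de période pertinente, des points de sa propre orbite et de ceux des autres orbites, le tout défini de façon cohérente le long des segments d'orbite puis prolongé à $X$. L'articulation des deux hypothèses en est le cœur : $perdim(X,T)<\frac d2$ borne la dimension des ensembles $P_k$ rencontrés, donc l'ordre des recouvrements utilisables, tandis que $dim(X)=n$ autorise, via la contrainte $ord(\alpha)+1\leq(N-1-k)\frac d2$ de \ref{approx4}, à briser la $k$-périodicité des segments d'orbite. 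Une fois tous ces $D_K$ reconnus ouverts et denses, la famille dénombrable obtenue recouvre $\Omega$ ; le théorème de Baire donne $D_\Omega$ dense, donc non vide, d'où le plongement de $(X,T)$ dans $(([0,1]^d)^\mathbb{Z},\sigma)$.
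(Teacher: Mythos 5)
Votre architecture globale est la bonne et recoupe celle du mémoire sur deux des trois régions : le bloc $(P\times P)\setminus\Delta$ est bien réglé par le théorème précédent, et le cas mixte (une coordonnée apériodique, une coordonnée périodique de période $k$) par \ref{approx4}, exactement comme dans le texte. En revanche, votre traitement du cas « même orbite apériodique » ($y_0=T^lx_0$, $x_0$ apériodique) contient une lacune réelle. Pour la densité de $D_K$, il faut séparer \emph{tous} les couples $(x,y)$ d'un voisinage produit $K=\bar U\times\bar V$, et un point $y$ voisin de $T^lx_0$ n'est en général ni périodique, ni de la forme $T^lx$ pour \emph{ce} $x$ : si $f(T^ax)=f(T^ay)$ pour tout $a$, avec $y=T^lx'$ pour un certain $x'\in\bar U$ éventuellement distinct de $x$, on obtient seulement $F(x)|_j=F(x')|_{j+l}$ pour les indices pertinents, c'est-à-dire une relation de \emph{décalage entre deux points distincts}, à laquelle la conclusion $F(x)|_1^{N-1}\notin V_{N-1}^l$ de \ref{approx4} (non-$l$-périodicité du segment d'un \emph{seul} point) ne contredit rien. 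C'est précisément pour des relations de décalage entre deux points que le mémoire a dû forger \ref{approx3} (dans le cadre périodique, avec décalage cyclique) ; \ref{approx4} ne sert que contre un $y$ véritablement périodique, dont la suite complète $a\mapsto f(T^ay)$ est $k$-périodique — d'où la nécessité, que vous ne mentionnez qu'implicitement, de choisir le voisinage de $y_0$ avec $\bar W\subset H_k$.

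La réparation est immédiate, et c'est la voie du mémoire : ne traitez pas le cas « même orbite » à part. Le lemme \ref{dist} fournit, y compris lorsque $y_0=T^lx_0$ (en prenant les indices $0,l+1,2(l+1),\dots,2n(l+1)$), des indices $i_0,\dots,i_{2n}$ rendant les $2(2n+1)$ translatés deux à deux distincts, donc des voisinages compacts $U_1,U_2$ à translatés deux à deux disjoints ; on conclut alors uniformément pour tout couple de points apériodiques distincts, même orbite ou non, soit par \ref{approx2} avec $n_1=n_2$ (choix du mémoire : la propriété 3 donne $F_1(x)\neq F_2(y)$), soit par votre argument à la Jaworski via \ref{Menger2} avec $m$ tel que $md\geq 2n+1$, qui est une alternative parfaitement valable et plus proche de la preuve du théorème apériodique. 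Avec cette correction — supprimer le traitement spécial de la même orbite et le faire absorber par l'argument à translatés disjoints — votre preuve devient essentiellement celle du mémoire.
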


\begin{proof}
Posons $\Gamma = (X \times X)\backslash (\Delta \cup (P\times P))$. On peut écrit $\Gamma=D_1 \cup D_2$ où $D_1= (P\times X\backslash P)\cup (X\backslash P\times P)$ et $D_2=\Gamma \backslash D_1$.

Soit $(x,y)\in D_1$. Par exemple on peut supposer que x n'est pas un point périodique et que $y\in H_m$ où $n\in \mathbb{N}$. On choisit $S\in \mathbb{N}$ tel que $n+1\leq (S-\frac{m}{2})d$. On peut trouver un voisinage ouvert de x tel que $U\subset X\backslash P$ et tel que pour $l\in \{0,1...,2S\}$, $\bar{U} \cap T(\bar{U})=\emptyset$. Enfin on choisit un voisinage W de y tel que $W \subset \bar{W} \subset H_m$, et on définit $K=\bar{U}\times \bar{X}$.

Montrons que $D_K$ est dense dans $C(X,[0,1]^d)$. Soit $f\in C(X,[0,1]^d)$ et $\epsilon >0$. $\bar{U}$ et $\bar{W}$ sont des compacts disjoints donc $dist(\bar{U},\bar{X})>0$. On peut supposer sans perte de généralité que $\epsilon < dist(\bar{U},\bar{X})$. Soit $\alpha$ un recouvrement ouvert fini de $\bar{U}$ tel que $ord(\alpha)\leq n, \max_{V\in \alpha, k \in {0,1,...,2S}} diam(f(T^k(V)))<\frac{\epsilon}{2}$, et $\max_{V\in \alpha}diam(V)<\epsilon$. Pour tout $V\in \alpha$ on choisit un point $q_V \in V$ et on note : $\tilde{v}_V=(f(T^k(q_V)))_{k\in \{ 0,1,...,2S\} }$. On peut alors appliquer \ref{approx4} aux entiers 2S+1 et m. On obtient une fonction continue $F:X \to ([0,1]^d)^{2S+1}$.

Posons $Z=\bigcup_{k=0}^{2S}T^k(\bar{U})$. On définit $g:Z\to [0,1]^d$ par :
\[ \forall k\in \{ 0,1,...2S\},\forall z\in \bar{U}, g_(T^k(z))=F(z)_k \]
Alors comme dans le théorème précédent, on peut prolonger g en une fonction $\tilde{f}$ telle que $\Arrowvert f-\tilde{f} \Arrowvert _\infty < \epsilon$. Alors $\tilde{f}$ est dans $D_K$. En effet, supposons qu'il existe $(x,y)\in K$ tel que $\forall a\in \mathbb{Z}, f(T^ax)=f(T^ay)$. Alors en particulier $(f(Tx),..., f(T^{2S}x))=(f(Ty),..., f(T^{2S}y))$. Comme $(f(Ty),..., f(T^{2S}y))\in V_{2S}^m$, on en déduit que $F(x)|_1^{2S}\in V_{2S}^m$, ce qui est impossible. Finalement, on a prouvé que
$D_K$ est dense dans $C(X,[0,1]^d)$.

On va conduire un raisonnement analogue dans le deuxième cas : soit $(x,y)\in D_2$. Alors d'après \ref{dist}, on peut trouver des indices $i_0,i_1,...,i_{2n}$ tels que les points $T^{i_0}(x),...,T^{i_{2n}}(x),T^{i_0}(y),...,T^{i_{2n}}(y)$ sont tous distincts. Alors on peut trouver deux voisinages compacts respectifs de $x$ et $y$, $U_1$ et $U_2$, tels que $T^{i_0}(U_1),...,T^{i_{2n}}(U_1),T^{i_0}(U_2),...,T^{i_{2n}}(U_2)$ sont deux à deux disjoints. On pose alors $K=U_1\times U_2$.

Montrons que $D_K$ est dense dans $C(X,[0,1]^d)$. Soit $f\in C(X,[0,1]^d)$ et $\epsilon >0$.On peut supposer sans perte de généralité que $\epsilon < dist(\bar{U_1},\bar{U_2})$. Pour $i=1,2$, soit $\alpha_i$ un recouvrement ouvert de $U_i$ tel que $ord(\alpha_i)<n$,$ \max_{V\in \alpha_i, k \in {0,1,...,2n}} diam(f(T^k(V)))<\epsilon$, et $\max_{V\in \alpha_i}diam(V)<\epsilon$.
Pour tout $U\in \alpha_i$, on se donne un point $q_U\in U$ et on pose $v_U=(f(T^{i_k}q_U))_{k=0}^{2n}$. On peut alors appliquer \ref{approx2} en posant $n_1=n_2=n$. On obtient deux fonctions continues $F_1$ et $F_2$.

Posons $Z=\bigcup_{k=0}^{2n}T^k(\bar{U_1}\cup \bar{U_2})$. On définit $g:Z\to [0,1]^d$ par :
\[ \forall k\in \{ 0,1,...2S\},\forall z\in \bar{U_i}, g(T^k(z))=F_i(z)_k \]
Alors comme précédemment, on peut prolonger g en une fonction $\tilde{f}$ telle que $\Arrowvert f-\tilde{f} \Arrowvert _\infty < \epsilon$. Alors $\tilde{f}$ est dans $D_K$. En effet, supposons qu'il existe $(x,y)\in K$ tel que $\forall a\in \mathbb{Z}, f(T^ax)=f(T^ay)$. Alors en particulier $(f(T^{i_0}x),..., f(T^{i_{2n}}x))=(f(T^{i_0}y),..., f(T^{i_{2n}}y))$, c'est-à-dire $F_1(x)=F_2(y)$. C'est une contradiction. Finalement, on a prouvé que $D_K$ est dense dans $C(X,[0,1]^d)$.
\end{proof}

On considère maintenant  un système dynamique : $(X,T)=(\Pi_{i\in \mathbb{N}}X_i,\Pi_{i\in \mathbb{N}}T_i)$, produit dénombrable de systèmes dynamiques $(X_i,T_i)$, où $X_i$ est un espace compact métrisable de dimension finie et $T_i$ un homéomorphisme sur $X_i$.

On notera $X^{(n)}$ le produit fini $X_1\times X_2\times ... \times... X_n$ et $T^{(n)}$ le produit fini $T_1 \times T_2 \times ... T_n$.

\begin{propo}
Supposons que $\liminf_{n\to \inf} perdim(X^{(n)},T^{(n))})<\frac{d}{2}$. Alors le système dynamique (X,T) se plonge dans le décalage $(([0,1]^d)^\mathbb{Z},\sigma)$.
\end{propo}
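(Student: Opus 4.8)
Le plan est d'appliquer la méthode de Baire décrite en section 3.1 à l'espace complet $C(X,[0,1]^d)$, en ramenant chaque paire de points distincts à l'un des systèmes tronqués $(X^{(n)},T^{(n)})$, auquel s'applique le théorème précédent.

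On commencerait par trois observations. Premièrement, chaque $X^{(n)}$ est de dimension topologique finie : étant un produit fini de compacts métrisables de dimension finie, on a par \ref{produit} (itéré) $dim(X^{(n)})\leq \sum_{i=1}^{n}dim(X_i)<\infty$. Deuxièmement, la projection $\pi^{(n)}:X\to X^{(n)}$ est un facteur, i.e. $\pi^{(n)}\circ T=T^{(n)}\circ \pi^{(n)}$. Troisièmement, l'hypothèse $\liminf_n perdim(X^{(n)},T^{(n)})<\frac{d}{2}$ fournit une infinité d'entiers $n$, arbitrairement grands, tels que $perdim(X^{(n)},T^{(n)})<\frac{d}{2}$ ; pour un tel $n$, dit \emph{admissible}, le théorème précédent assure que l'ensemble des $g\in C(X^{(n)},[0,1]^d)$ induisant un plongement de $(X^{(n)},T^{(n)})$ (donc séparant toutes les orbites) est dense dans $C(X^{(n)},[0,1]^d)$.

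On recouvrirait ensuite $\Omega=(X\times X)\backslash \Delta$ par des compacts $K$ tels que $D_K$ soit dense (on sait déjà que $D_K$ est ouvert). Étant donné $(x_0,y_0)\in \Omega$, comme $x_0\neq y_0$ leurs coordonnées diffèrent quelque part, donc il existe un entier admissible $n_0$ avec $\pi^{(n_0)}(x_0)\neq \pi^{(n_0)}(y_0)$ ; on prendrait alors $K=\bar U\times \bar V$ un voisinage compact de $(x_0,y_0)$ tel que $\pi^{(n_0)}(\bar U)\cap \pi^{(n_0)}(\bar V)=\emptyset$, propriété qui persiste pour tout $n'\geq n_0$ puisque $\pi^{(n_0)}$ se factorise par $\pi^{(n')}$. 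Pour montrer que $D_K$ est dense, on partirait de $f\in C(X,[0,1]^d)$ et $\epsilon>0$ : par uniforme continuité de $f$, les fonctions se factorisant par une projection finie sont denses, donc pour $n'\geq n_0$ admissible assez grand on dispose de $g_0\in C(X^{(n')},[0,1]^d)$ avec $\Arrowvert f-g_0\circ \pi^{(n')}\Arrowvert_\infty<\frac{\epsilon}{2}$ ; la densité des plongements pour $(X^{(n')},T^{(n')})$ fournit $g$ à distance $<\frac{\epsilon}{2}$ de $g_0$ séparant toutes les orbites. On poserait $\tilde f=g\circ \pi^{(n')}$, qui vérifie $\Arrowvert f-\tilde f\Arrowvert_\infty<\epsilon$, et pour $(x,y)\in K$ on a $\pi^{(n')}(x)\neq \pi^{(n')}(y)$, d'où un $i$ avec $g((T^{(n')})^i\pi^{(n')}(x))\neq g((T^{(n')})^i\pi^{(n')}(y))$, soit $\tilde f(T^i x)\neq \tilde f(T^i y)$ grâce à $\pi^{(n')}\circ T^i=(T^{(n')})^i\circ \pi^{(n')}$ ; donc $\tilde f\in D_K$.

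On conclurait comme en section 3.1 : $\Omega$ étant de Lindelöf, on en extrait un recouvrement dénombrable $(K_m)$, et $D_\Omega=\bigcap_m D_{K_m}$ est dense par le théorème de Baire, donc non vide, ce qui livre un plongement $I_f$. Le point le plus délicat sera précisément le passage de l'infini au fini : il faut choisir un même entier $n'$ qui soit à la fois admissible, supérieur à $n_0$, et assez grand pour approcher $f$ ; c'est l'infinitude des entiers admissibles, garantie par l'hypothèse sur la $\liminf$, qui rend ce choix simultané possible.
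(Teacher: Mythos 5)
Votre démonstration est correcte et suit essentiellement la même démarche que le mémoire : approximation de $f$ par une fonction se factorisant par une projection finie $\pi^{(n')}$ (via l'uniforme continuité), choix d'un $n'$ admissible assez grand grâce à l'hypothèse de $\liminf$, application du théorème précédent sous sa forme dense pour obtenir une fonction séparant les orbites de $(X^{(n')},T^{(n')})$, puis recouvrement de $\Omega$ par des compacts adaptés et conclusion par Lindelöf et Baire. Votre recours à la densité (et non à la seule existence) des fonctions séparantes dans $C(X^{(n')},[0,1]^d)$ est exactement ce que fait aussi le texte, et il est justifié puisque la preuve du théorème précédent fournit un $G_\delta$ dense.
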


\begin{proof}
Soient $x^{(0)}$ et $y^{(0)}$ distincts dans X. Il existe $k\in \mathbb{N}$ tel que $x^{(0)}_k\neq y^{(0)}_k$. On peut trouver deux compacts U et V dans X, voisinages respectifs de $x^{(0)}$ et $y^{(0)}$, tels que si $(x,y)\in U\times V$, alors $x_k\neq y_k$.
Posons $K=U\times V$.

On va utiliser la méthode décrite dans la section 3.1. Il reste donc à montrer que $D_K$ est dense dans $C(X,[0,1]^d)$.

Soit $f\in C(X,[0,1]^d)$, et $\epsilon >0$. f est uniformément continue sur X. Il existe donc $\eta>0$ tel que si $d(x,y)<\eta$ alors $\parallel f(x)-f(y)\parallel <\frac{\epsilon}{2}$.
Soit n un entier supérieur à k, tel que si x et y ont leurs n premières coordonnées égales, alors $d(x,y)<\eta$. Soit $z\in X$.
Posons :
\begin{align*}
h : &X \longrightarrow [0,1]^d\\
&x \longmapsto f(x_1,x_2,...,x_n,z_{n+1},z_{n+2}...)
\end{align*}
Alors $h\in C(X,[0,1]^d)$, $\parallel f-h\parallel <\frac{\epsilon}{2}$, et h(x) ne dépend que des n premières coordonnées de x.
On peut donc définir :
\begin{align*}
\tilde{h}:&X^{(n)}\longrightarrow [0,1]^d\\
&(x_1,...,x_n)\longmapsto h(x_1,...,x_n,z_{n+1},...)
\end{align*}

Comme $\liminf_{n\to +\inf} perdim(X^{(n)},T^{(n))})<\frac{d}{2}$, on peut supposer, quitte à prendre un n plus grand, que $perdim(X^{(n)},T^{(n))})<\frac{d}{2}$.
Alors comme $X^{(n)}$ est de dimension finie, d'après le théorème précédent, il existe une fonction $\tilde{f} \in C(X^{(n)},[0,1]^d)$ telle que $\parallel \tilde{f}-\tilde{h}\parallel <\frac{\epsilon}{2}$ et telle que si x et y sont deux éléments distincts de $X^{(n)}$, alors il existe $i\in \mathbb{N}$ tel que :
$\tilde{f}(T^i(x))\neq \tilde{f}(T^i(y))$.
On pose alors :
\begin{align*}
f':&X\longrightarrow [0,1]^d\\
&x\longmapsto \tilde{f}(x_1,...,x_n)
\end{align*}

Alors $\parallel f'-h\parallel <\frac{\epsilon}{2}$, donc $\parallel f'-f\parallel <\epsilon$, et $f'\in D_K$.

On en déduit que que $D_K$ est dense dans $C(X,[0,1]^d)$, puis que finalement $D_{(X\times X) \backslash \Delta}$ est dense dans $C(X,[0,1]^d)$.
\end{proof}

\begin{Ex}

\end{Ex}

La condition $\liminf_{n\to \inf} perdim(X^{(n)},T^{(n))})<\frac{d}{2}$ est strictement plus contraignante que $perdim(X,T)<\frac{d}{2}$. En effet, pour tout $n\in \mathbb{N}$, d'après \ref{sousperdim}, $perdim(X,T)\leq perdim(X^{(n)},T^{(n))})$. Donc si $\liminf_{n\to \inf} perdim(X^{(n)},T^{(n))})<\frac{d}{2}$, on a $perdim(X,T)<\frac{d}{2}$.

D'autre part, considérons le système dynamique $(X,T)=(\Pi_{i\in \mathbb{N}}X_i,\Pi_{i\in \mathbb{N}}T_i)$, où pour tout $i\in \mathbb{N}$, $X_i$ est le tore de dimension 1, et $T_i$ est l'application identité, sauf si i est une puissance de 2 : alors $T_i$ est la rotation d'angle $\frac{2\pi}{i}$. Alors $perdim(X,T)=0$. Cependant, pour tout $n\in \mathbb{N}$, $1\leq perdim(X^{(n)},T^{(n))})\leq 2$.

\subsection{n-marqueurs}

\subsubsection{n-marqueurs}

\begin{Def}
Soit $n\in \mathbb{N}$. Soit F un sous-ensemble de X. On dit que c'est un n-marqueur de (X,T) si :
\begin{enumerate}
\item $\forall i\in \{ 1,2,...,n-1\} , F\cap T^i(F)=\emptyset$
\item $\exists m\in \mathbb{N}, X=\bigcup_{i=1}^mT^i(F)$
\end{enumerate}
On dit que (X,T) a la propriété de marqueur si pour tout $n\in \mathbb{N}$ il existe un n-marqueur ouvert de (X,T).
\end{Def}

\begin{Def}
(X,T) a la propriété topologique forte de Rokhlin si pour tout $n\in \mathbb{N}$ il existe une fonction continue $f:X\to \mathbb{R}$ telle qu'en définissant :
\[ E_f=\{ x\in X | f(T(x))\neq f(x)+1\} \]
on ait :
\[ \forall i\in \{ 1,2,...,n-1\}, E_f\cap T^i(E_f)=\emptyset \]

\end{Def}

\begin{thm}\label{Rok}
(X,T) a la propriété de marqueur si et seulement si (X,T) a la propriété topologique forte de Rokhlin.
\end{thm}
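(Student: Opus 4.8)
Le plan est d'établir les deux implications séparément : le sens « Rokhlin $\Rightarrow$ marqueur » est immédiat, tandis que la réciproque concentre toute la difficulté.

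Pour le sens facile, je supposerais la propriété topologique forte de Rokhlin et je fixerais $n$. Soit $f$ la fonction continue associée ; je poserais $F=E_f$. Alors $E_f$ est le complémentaire de l'ensemble $\{x : f(Tx)=f(x)+1\}$, image réciproque de $\{0\}$ par l'application continue $x\mapsto f(Tx)-f(x)-1$, donc $F$ est ouvert. La condition (1) de la définition d'un $n$-marqueur est exactement la condition de séparation imposée à $E_f$. Pour la condition (2), j'utiliserais que $f$ est bornée sur le compact $X$, disons $|f|\le M$ : si un point $x$ vérifiait $T^{-i}x\notin E_f$ pour tout $1\le i\le m$, alors en télescopant on obtiendrait $f(x)=f(T^{-m}x)+m$, d'où $m\le 2M$. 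En choisissant $m=\lfloor 2M\rfloor+1$, tout $x$ admet donc un indice $1\le i\le m$ avec $T^{-i}x\in E_f$, c'est-à-dire $X=\bigcup_{i=1}^m T^i(E_f)$. Ainsi $F=E_f$ est un $n$-marqueur ouvert.

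Pour la réciproque, je fixerais $n$ et un $n$-marqueur ouvert $F$, avec $X=\bigcup_{i=1}^m T^i(F)$. Je commencerais par rétrécir le marqueur en un fermé : partant du recouvrement ouvert fini $(T^iF)_{1\le i\le m}$ de $X$, on trouve des fermés $C_i\subset T^iF$ le recouvrant encore, et $K=\bigcup_i T^{-i}C_i$ est alors un fermé contenu dans $F$ tel que $X=\bigcup_{i=1}^m T^i(K)$. Par le lemme d'Urysohn, je fixerais ensuite $\phi:X\to[0,1]$ continue, valant $1$ sur $K$ et $0$ sur le fermé $X\setminus F$, de sorte que $\{\phi>0\}\subset F$. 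Le cœur de la preuve consiste alors à poser
\[ f(x)=\sum_{j\ge 1}\ \prod_{l=1}^{j}\bigl(1-\phi(T^{-l}x)\bigr). \]
Pour chaque $x$, le recouvrement par $K$ fournit un indice $1\le i\le m$ avec $T^{-i}x\in K$, donc un facteur nul dès que $j\ge i$ : la somme est en réalité finie (les termes $j\ge m$ sont nuls), si bien que $f$ est une somme finie de fonctions continues, donc continue et bornée. Un calcul télescopique du type série géométrique donne la relation clé
\[ f(Tx)=\bigl(1-\phi(x)\bigr)\bigl(1+f(x)\bigr),\qquad\text{c'est-à-dire}\qquad f(Tx)-f(x)-1=-\,\phi(x)\bigl(1+f(x)\bigr). \]
Comme $f\ge 0$, le membre de droite s'annule si et seulement si $\phi(x)=0$ ; j'en déduirais $E_f=\{\phi>0\}\subset F$, et la séparation de $E_f$ serait héritée de celle de $F$ : pour $1\le i\le n-1$, $E_f\cap T^i(E_f)\subset F\cap T^i(F)=\emptyset$. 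Donc $f$ témoigne de la propriété de Rokhlin pour $n$.

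Le principal obstacle est la continuité. La fonction « temps de retour arrière vers $F$ », qui réaliserait naïvement $E_f\subset F$, est entière et discontinue au bord de $F$, et l'on ne peut pas non plus épaissir $E_f$ en une bande d'instants consécutifs sans détruire la séparation. Toute la subtilité réside donc dans le remplacement de la remise à zéro instantanée par la sommation pondérée ci-dessus : sa finitude (garantie par le rétrécissement en $K$) assure la continuité, tandis que la relation télescopique confine le défaut $f\circ T-f-1$ au support de $\phi$, contenu dans $F$. Je m'attends à ce que la vérification soigneuse de cette identité et le contrôle du support de $\phi$ soient les seuls points véritablement à rédiger.
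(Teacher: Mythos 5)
Votre preuve est correcte, et le cœur en est le même que celui du mémoire : votre série $f(x)=\sum_{j\ge 1}\prod_{l=1}^{j}(1-\phi(T^{-l}x))$ est exactement la forme close de l'espérance du nombre de pas de la marche aléatoire « tuée » du texte (on arrête en $y$ avec probabilité $\rho(y)$, sinon on passe à $T^{-1}y$) — à composition près par $T^{-1}$, ce qui explique que vous obteniez directement $E_f=\{\phi>0\}\subset F$ là où le texte obtient une inclusion dans un translaté de $U$. Deux points d'exécution diffèrent néanmoins et méritent d'être signalés. D'une part, pour le sens « Rokhlin $\Rightarrow$ marqueur », votre télescopage quantitatif ($m\le 2\sup|f|$, donc $m=\lfloor 2M\rfloor+1$ convient) remplace l'argument du texte par l'absurde (divergence de $f(T^{-i}y)$ vers $-\infty$) suivi de l'extraction d'un sous-recouvrement fini des ouverts $T^i(E_f)$ par compacité ; les deux sont valables, le vôtre donne en prime une borne explicite. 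D'autre part, et c'est le point le plus intéressant, pour la réciproque le texte choisit un ouvert $U\supset F$ avec $U\cap T^iU=\emptyset$ et demande $\rho=1$ sur $F$ avec support dans $U$ : comme $F$ est ouvert, Urysohn exige en fait $\bar{F}\subset U$, or $\bar{F}\cap T^i\bar{F}$ peut être non vide même lorsque $F\cap T^iF=\emptyset$, si bien que l'existence d'un tel $U$ n'est pas justifiée telle quelle. Votre rétrécissement — lemme de rétrécissement fini appliqué au recouvrement $(T^iF)_{1\le i\le m}$, puis $K=\bigcup_i T^{-i}C_i$ fermé dans $F$ recouvrant encore $X$ par ses itérés, et Urysohn entre $K$ et $X\setminus F$ — contourne proprement cette difficulté, puisque la séparation de $E_f$ est alors héritée de celle de $F$ lui-même, sans passage aux adhérences. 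Votre rédaction répare donc une petite imprécision de la preuve du mémoire ; tous les autres détails (troncature uniforme de la série aux $m-1$ premiers termes, donc continuité de $f$, identité $f(Tx)-f(x)-1=-\phi(x)(1+f(x))$ et positivité de $1+f$) sont corrects.
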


\begin{proof}
Supposons que (X,T) a la propriété topologique forte de Rokhlin. Soit $n\in \mathbb{N}$. Soit $f:X\to \mathbb{R}$ une fonction continue telle que :
\[ \forall i\in \{ 1,2,...,n-1\}, E_f\cap T^i(E_f)=\emptyset \]
Alors $X=\bigcup_{i=1}^\infty T^i(E_f)$. En effet, supposons qu'il existe $y\in X \backslash \bigcup_{i=1}^\infty T^i(E_f)$. Alors la suite $(f(T^{-i}(y)))_{i\in \mathbb{N}}$ diverge vers $-\infty$. C'est impossible car la fonction f étant continue sur le compact X, elle est bornée. Finalement, par compacité de X, il existe $m\in \mathbb{N}$ tel que $X=\bigcup_{i=1}^m T^i(E_f)$.
Comme $E_f$ est ouvert, finalement $E_f$ est un n-marqueur. Donc (X,T) a la propriété de marqueur.\\

Réciproquement, supposons que (X,T) a la propriété de marqueur. Soit $n\in \mathbb{N}$ et F un n-marqueur. Soit U un ouvert de X, contenant F, tel que $\forall i\in \{1,2,...,n-1\}, U\cap T^iU=\emptyset$.

Soit $\rho :X\to [0,1]$ une fonction continue qui vaut 1 en tout point de F, et dont le support est inclus dans U.
On définit une marche aléatoire sur X comme suit : si à l'instant t on se trouve sur le point y, à l'instant t+1, on aura mis fin à la marche avec probabilité $\rho(y)$ et on se place en $T^{-1}(y)$ avec probabilité $1-\rho(y)$. Comme il existe $m\in \mathbb{N}$ tel que $X=\bigcup_{i=1}^mT^i(F)$, et que $\rho$ vaut 1 sur F, quelque soit son point de départ, la marche aléatoire s'arrête après au plus m étapes.

Soit f la fonction qui à $x\in X$ associe le nombre d'étapes espéré avant la fin de la marche. Alors $f:X\longrightarrow \mathbb{R}$ est une fonction continue. De plus, si $x\notin U$, alors $\rho(x)=0$ donc la marche partant de x se déplace à l'instant suivant vers $T^{-1}x$. Donc $f(T^{-1}x)=f(x)-1$, c'est-à-dire $T^{-1}x\notin E_f$. On en déduit que $E_f\subset T(U)$. Comme $\forall i\in \{1,2,...,n-1\}, U\cap T^iU=\emptyset$, on en déduit que $\forall i\in \{ 1,2,...,n-1\}, E_f\cap T^i(E_f)=\emptyset$.

Finalement, (X,T) a la propriété topologique forte de Rokhlin.
\end{proof}

\begin{propo}
Si (X,T) est un système minimal, il a la propriété de marqueur.
\end{propo}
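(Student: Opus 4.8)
The plan is to handle the two defining properties of an $n$-marker separately, constructing the marker as a suitable translate of a small open neighbourhood of a well-chosen point. Fix $n\in\mathbb{N}$. A minimal system is either a single periodic orbit or infinite; in the finite case $n$-markers exist only for $n$ up to the period, so I focus on the infinite case, which is the one of interest. Here minimality forces aperiodicity: if $T^p x=x$ for some $x$ and some $p\geq 1$, the orbit of $x$ would be a finite, closed, $T$-invariant set, hence all of $X$ by minimality, contradicting that $X$ is infinite. Consequently, for any base point $x_0\in X$ the $n$ points $x_0,Tx_0,\dots,T^{n-1}x_0$ are pairwise distinct.

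From this I would obtain condition $1$. Since $X$ is a compact metric space, I choose $\delta>0$ so that the balls $B(T^jx_0,\delta)$ for $0\leq j\leq n-1$ are pairwise disjoint, and then, by continuity of each $T^j$, an open neighbourhood $U$ of $x_0$ with $T^jU\subset B(T^jx_0,\delta)$ for every such $j$. The sets $U,TU,\dots,T^{n-1}U$ are then pairwise disjoint, so $U\cap T^iU=\emptyset$ for all $i\in\{1,\dots,n-1\}$. I note that any translate $T^cU$ still satisfies this, since $T^cU\cap T^{c+i}U=T^c(U\cap T^iU)=\emptyset$, which gives me the freedom to shift indices later.

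For condition $2$ I would use minimality directly. The set $O=\bigcup_{i\in\mathbb{Z}}T^iU$ is open, nonempty and $T$-invariant, so its complement is a closed $T$-invariant proper subset and must be empty; hence $O=X$. By compactness finitely many iterates already cover $X$, say $X=\bigcup_{i\in S}T^iU$ with $S\subset\mathbb{Z}$ finite, and writing $a=\min S$, $b=\max S$ we even have $X=\bigcup_{i=a}^{b}T^iU$. Setting $F=T^{a-1}U$ and $m=b-a+1$ then yields $\bigcup_{i=1}^{m}T^iF=\bigcup_{i=a}^{b}T^iU=X$, while $F$ inherits condition $1$ by the remark above. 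Thus $F$ is an open $n$-marker, and since $n$ was arbitrary, $(X,T)$ has the marker property.

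The genuinely delicate point is the first step: the whole argument hinges on producing an open set whose first $n$ iterates are disjoint, and this is exactly where aperiodicity, and hence the infiniteness of the minimal system, is indispensable. Once that neighbourhood is in hand, condition $2$ is routine: minimality and compactness give a finite subcover, and the only care needed is to relabel the indices so that the cover uses the consecutive positive powers $T^1,\dots,T^m$ demanded by the definition.
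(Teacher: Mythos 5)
Your proof is correct and follows essentially the same route as the paper: a small open neighbourhood of a point whose first $n$ iterates are pairwise disjoint, then minimality plus compactness to extract a finite cover by iterates of that set. Your two refinements are in fact slightly more careful than the paper's argument: you use the two-sided invariant union $\bigcup_{i\in\mathbb{Z}}T^iU$ and then shift by $T^{a-1}$ to obtain consecutive positive powers, thereby avoiding the paper's implicit appeal to density of \emph{backward} orbits in its claim $\bigcup_{i=1}^{\infty}T^i(B)=X$, and you note explicitly that the argument requires the infinite (hence aperiodic) case — a point the paper glosses over when it asserts that $x, T(x),\dots,T^{n-1}(x)$ are distinct, even though a finite periodic orbit admits no $n$-marker once $n$ exceeds the period.
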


\begin{proof}
Soit $n\in \mathbb{N}$. Soit $x\in X$. $x,T(x),..., T^{n-1}(x)$ sont distincts. Donc il existe une boule ouverte B centrée en x vérifiant :
\[\forall i\in \{ 1,2,...,n-1\} , B\cap T^i(B)=\emptyset\]
L'orbite de x est dense dans X donc $\bigcup_{i=1}^\infty T^i(B)=X$. Par compacité de X, il existe $m\in \mathbb{N}$ tel que $\bigcup_{i=1}^m T^i(B)=X$. B est donc un n-marqueur ouvert de (X,T).

\end{proof}

\begin{Lemme}\label{bord}
Supposons que X est de dimension finie d, et (X,T) apériodique. Soit U un ouvert de X. Alors pour tout $k\in \mathbb{N}$ et pour tout ouvert $V$ tel que $\partial U \subset V$, il existe un ouvert $U'$ tel que $U\subset U'\subset U\cup V$, $\partial U'\subset \partial U\cup V$ et et toute sous-famille de $\{ \partial U', T(\partial U'),..., T^{k-1}(\partial U')\}$ de cardinal >d est d'intersection vide.
\end{Lemme}

\begin{Lemme}\label{tours}
Supposons que X est de dimension finie d et (X,T) est un système apériodique. Soit $N\in \mathbb{N}$. Soient U et V deux ouverts de X. Alors si on pose $m=(2d+2)N-1$, et si :
\begin{enumerate}
\item $\forall i\in \{ 1,2,...,N-1\} ,\bar{U}\cap T^i\bar{U}=\emptyset $
\item $\forall i\in \{ 1,2,...,m\} ,\bar{V}\cap T^i\bar{V}=\emptyset $
\end{enumerate}
alors il existe un ouvert $W\subset X$ tel que :
\begin{enumerate}
\item $\bar{U}\subset \bar{W}$
\item $\bar{V}\subset \bigcup_{i=1}^m T^i(\bar{W})$
\item $\forall i\in \{ 1,2,...,N-1\} ,\bar{W}\cap T^i\bar{W}=\emptyset $
\end{enumerate}
\end{Lemme}

Soit un réel $\rho >0$ et $A\subset X$. On notera :
\[ B_\rho (A)=\{ x\in X | d(x,A)\leq \rho \} \]

\begin{proof}
On pose : $R=\bar{V}\backslash \bigcup_{i=1}^mT^iU$ et on va construire l'espace W sous la forme suivante :
\[ W=U\cup \bigcup_{H\in \Phi}T^{-c(H)N}H\]
où $\Phi$ est un recouvrement ouvert fini de R, et pour tout H dans $\Phi$, c(H) est un entier tel que $1\leq c(H)N<m$.

Alors on aura : $\bar{U}\subset \bar{W}$ et $\bar{V}\subset \bigcup_{i=1}^m T^i(\bar{W})$ . En effet, soit $x\in V$. Si $x\in \bigcup_{i=1}^mT^iU$ alors comme $\bar{U}\subset \bar{W}$, $x\in \bigcup_{i=1}^m T^i(\bar{W})$. Sinon, $x\in R$. Alors il existe $H$ dans $\Phi$ tel que $x\in H$ : d'où $x\in T^{c(H)N}\bar{W}$. Comme $1\leq c(H)N<m$, on a bien $x\in \bigcup_{i=1}^m T^i(\bar{W})$.\\

On va de plus construire $\Phi$ et $c(H)$ pour tout $H\in \Phi$, de manière à ce que :
\begin{enumerate}
\item en posant $O=\bigcup_{H\in \Phi}\bar{H}$, $\forall i\in \{ 1,...,m\} , O\cap T^i(O)=\emptyset$
\item c est une fonction $c:\Phi \to \{1,...,2d+1\}$ telle que :
\[\forall H\in \Phi, \forall i\in \{ -(N-1),...,N-1\} , T^i\bar{H}\cap T^{c(H)N}\bar{U} =\emptyset \] \\
\end{enumerate}

Vérifions qu'alors on aura bien : $\forall i\in \{ 1,2,...,N-1\} ,\bar{W}\cap T^i\bar{W}=\emptyset $. Supposons le contraire. Il existe x et y dans $\bar{W}$ et $i\in \{ 1,2...,N-1\}$ tels que $x=T^iy$. On distingue cinq cas :

\begin{enumerate}
\item si x et y sont dans $\bar{U}$, cela contredit la propriété : $\forall i\in \{ 1,2,...,N-1\} ,\bar{U}\cap T^i\bar{U}=\emptyset $
\item s'il existe $H\in \Phi$ tel que x et y sont dans $T^{-c(H)N}\bar{H}$ alors ils sont dans  $T^{-c(H)N}O$. Comme $i<m$, cela contredit la propriété énoncée sur $O$.
\item s'il existe $H_1$ et $H_2$ distincts dans $\Phi$ tels que $x\
 \in T^{-c(H_1)N}\bar{H_1}$ et $y\in T^{-c(H_2)N}\bar{H_2}$, alors $T^{i+c(H_1)N}(y)\in H_1$ et $T^{c(H_2)N}(y)\in H_2$. Donc ces deux éléments sont dans $O$. Or $0<|i+c(H_1)N-c(H_2)N|\leq (2d+1)N<m$. Donc cela contredit la propriété sur $O$.
\item s'il existe $H\in \Phi$ tel que x est dans $T^{-c(H)N}\bar{H}$ et $y\in \bar{U}$, alors $T^{c(H)N}(x)\in \bar{H}\cap T^{i+c(H)N}(\bar{U})$. Donc $T^i\bar{H}\cap T^{c(H)N}\bar{U} \neq \emptyset$. Cela contredit la propriété énoncée sur c.
\item s'il existe $H\in \Phi$ tel que y est dans $T^{-c(H)N}\bar{H}$ et $x\in \bar{U}$, on conclut comme précédemment.\\
\end{enumerate}

Construisons maintenant $\Phi$ et $c$.\\

V vérifie : $\forall i\in \{ 1,2,...,m\} ,\bar{V}\cap T^i\bar{V}=\emptyset $. Donc R aussi, et on peut donc trouver un réel $\rho >0$ tel que $\forall i\in \{ 1,2,...,m\}, B_\rho(R)\cap T^i(B_\rho(R))=\emptyset $.

D'après \ref{bord}, quitte à agrandir U tout en gardant la condition : $\forall i\in \{ 1,2,...,N-1\} ,\bar{U}\cap T^i\bar{U}=\emptyset $, on peut supposer que toute sous-famille de $\{ \partial U, T^1(\partial U), ..., T^m(\partial U)\}$ de cardinal >d est d'intersection vide.

Alors on peut choisir $0<\delta <\rho$ tel que :
\[ |i\in \{1,...,m\} , T^i(\bar{U})\cap B_\delta(x) \neq \emptyset \} |\leq d  \]
En effet, supposons le contraire. Alors on peut trouver des suites $(x_n)_{n\in \mathbb{N}}\in R^\mathbb{N}$ et $(\delta_n)_{n\in \mathbb{N}}\in (\mathbb{R}^{+*})^\mathbb{N}$ telles que $ \delta_n \to_{n\to \infty} 0$ et :
\[ \forall n\in \mathbb{N}, |i\in \{1,...,m\} , T^i(\bar{U})\cap B_\delta(x_n) \neq \emptyset \} |> d \]
R est fermé dans le compact X donc est compact. On peut donc supposer (quitte à extraire) que la suite $(x_n)_{n\in \mathbb{N}}$ a une limite $x\in R$.
De plus, on peut trouver d+1 indices dans $\{ 1,2,...,m\}$, notés $i_1,i_2,...,i_{d+1}$, qui appartiennent chacun à une infinité d'ensembles $\{ i\in \{1,...,m\} , T^i(\bar{U})\cap B_\delta(x_n) \neq \emptyset \}$. Alors, quitte à extraire encore de la suite $(x_n)_{n\in \mathbb{N}}$, on peut supposer qu'il existe, pour tout $n\in \mathbb{N}$ et $1\leq l\leq d+1$, un élément $y_n^l \in T^{i_l}(\bar{U})$ tel que $d(y_n^l,x_n)\leq \delta_n$.
Alors les suites $(y_n^l)_{n\in \mathbb{N}}$ convergent toutes vers $x$. Donc $x\in R\cap \bigcap_{l=1}^{d+1}T^{i_l}\bar{U}$. Donc $\bigcap_{l=1}^{d+1}T^{i_l}(\partial \bar{U})\neq \emptyset$. C'est une contradiction, donc on peut bien choisir un tel $\delta$.

Par compacité de R, on peut trouver un recouvrement ouvert fini de R, $\Phi$, par des boules fermées de rayon $\delta$. Alors :
\[ \forall H \in \Phi, |\{ i\in \{1,...,m\} , T^i(\bar{U})\cap \bar{H} \neq \emptyset \} |\leq d  \]
En posant $O=\bigcup_{H\in \Phi}\bar{H}$, on a bien : $\forall i\in \{ 1,...,m\} , O\cap T^i(O)=\emptyset$

Enfin, pour $H\in \Phi$, on peut trouver un élément $c(H)$ tel que :
\[ \forall i\in \{ -(N-1),...,N-1\} , T^i\bar{H}\cap T^{c(H)N}\bar{U} =\emptyset \]
En effet, pour $1\leq k \leq 2d+1$, notons : $I_k=\{ kN-(N-1),...,kN+N-1\}$. Or, pour tout k, $I_k \subset \{ 1,2,...,m\}$ et d'autre part, $I_k$ et $I_{k+2}$ sont disjoints. Donc si $i\in \{ i\in \{1,...,m\} , T^i(\bar{U})\cap \bar{H} \neq \emptyset \}$, i est dans au plus deux de ces ensembles $I_k$. Or il y a au plus d tels indices i. Donc il existe un ensemble $I_{k_0}$ qui ne contient aucun indice i de ce type. En posant $c(H)=k_0$, on obtient la propriété désirée.
\end{proof}

\begin{thm}\label{marqapé}
Supposons que (X,T) est un système apériodique de dimension finie. Alors (X,T) a la propriété de marqueur.
\end{thm}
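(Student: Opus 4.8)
The plan is to fix $n \in \mathbb{N}$ and build an open $n$-marker by amalgamating a finite cover of $X$ into a single open set, using the tower-merging lemma \ref{tours} as the only real engine. Set $d = dim(X)$, keep $N = n$, and put $m = (2d+2)n - 1$ exactly as in \ref{tours}. The whole argument is then a short finite induction together with one passage from closed sets to open sets.

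First I would produce the raw material for the amalgamation. Since $(X,T)$ is apériodique, for every $x \in X$ the points $x, T(x), \dots, T^m(x)$ are pairwise distinct (if $T^i(x) = T^j(x)$ with $0 \le i < j \le m$ then $x$ would be periodic of period $j-i$), so by continuity of the finitely many maps $T^i$ there is an open ball $V_x \ni x$ with $\bar{V_x} \cap T^i(\bar{V_x}) = \emptyset$ for all $i \in \{1, \dots, m\}$. By compactness of $X$ I extract a finite subcover $V_1, \dots, V_k$; each $\bar{V_j}$ then satisfies hypothesis (2) of \ref{tours}.

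The heart of the proof is the induction. I set $W_0 = V_1$, which satisfies the disjointness demanded of the input $U$ since $m \ge N-1$, and for $1 \le j \le k$ I apply \ref{tours} with $U = W_{j-1}$ and $V = V_j$ to obtain an open $W_j$ with $\bar{W_{j-1}} \subset \bar{W_j}$, with $\bar{V_j} \subset \bigcup_{i=1}^m T^i(\bar{W_j})$, and with $\bar{W_j} \cap T^i(\bar{W_j}) = \emptyset$ for $i \in \{1, \dots, n-1\}$. This last property is precisely what is needed to take $U = W_j$ at the next step, so the induction runs. The point requiring care, which I would flag explicitly, is that every $V_j$ must enter as an input $V$ and not merely sit inside some $\bar{W_j}$: coverage is expressed by $\bigcup_{i=1}^m T^i(\bar W)$, which omits the index $i=0$, so a set only known to be contained in $\bar W$ is not thereby covered. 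Because $\bar{W_0} \subset \bar{W_1} \subset \dots \subset \bar{W_k}$, the coverage $\bar{V_j} \subset \bigcup_{i=1}^m T^i(\bar{W_k})$ survives to the final step for every $j$. Writing $W = W_k$ and using $\bigcup_j \bar{V_j} = X$, I conclude $X = \bigcup_{i=1}^m T^i(\bar W)$ with $\bar W \cap T^i(\bar W) = \emptyset$ for $i \in \{1, \dots, n-1\}$.

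It then remains to pass from the closed set $\bar W$ to a genuinely open marker, which is the one minor obstacle: \ref{tours} delivers a covering by the closure $\bar W$, whereas an $n$-marqueur must be open. Since $\bar W \cap T^i(\bar W) = \emptyset$ for $1 \le i \le n-1$, the compact sets $\bar W, T(\bar W), \dots, T^{n-1}(\bar W)$ are pairwise disjoint, so each distance $d(\bar W, T^i(\bar W))$ is strictly positive; using uniform continuity of the finitely many homeomorphisms $T^i$, I would choose $\eta > 0$ small enough that $W' = B_\eta(\bar W)$ still satisfies $W' \cap T^i(W') = \emptyset$ for $i \in \{1, \dots, n-1\}$. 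Then $W'$ is open and contains $\bar W$, so $X = \bigcup_{i=1}^m T^i(\bar W) \subset \bigcup_{i=1}^m T^i(W')$, whence equality. Thus $W'$ is an open $n$-marqueur, and since $n$ was arbitrary, $(X,T)$ a la propriété de marqueur. The substantive difficulty has all been absorbed into \ref{tours}; the deduction itself is essentially bookkeeping, with the two subtleties being the $i=0$ omission in the coverage and the closed-to-open thickening at the end.
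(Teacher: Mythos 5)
Your proposal is correct and follows essentially the same route as the paper: cover $X$ by finitely many open sets whose closures are disjoint from their first $m=(2d+2)n-1$ iterates, then merge them one at a time via le lemme~\ref{tours} into a single open set satisfying the marker conditions. If anything, your version is tidier on two points the paper glosses over — feeding $V_1$ back in as the input $V$ at the first step so that the missing index $i=0$ in the coverage $\bigcup_{i=1}^m T^i(\bar W)$ causes no trouble, and thickening $\bar W$ to an open $\eta$-neighborhood at the end to obtain a genuinely open marker.
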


\begin{proof}
Soit $n\in \mathbb{N}$. Posons comme dans \ref{tours}, on pose $m=(2d+2)N-1$. Comme (X,T) est apériodique et X compact, on peut recouvrir X par un nombre fini d'ouverts $U_1, U_2, ...,U_s$ tels que :
\[ \forall j\in \{1,2,...,s\}, \forall i\in \{ 1,2, ...,m\}, \bar{U_j} \cap T^i(\bar{U_j})=\emptyset \]
On va construire un n-marqueur par récurrence. On pose $W_1=U_1$.
Soit $k\leq s$. Supposons qu'il existe un ouvert $W_k$ tel que $\bigcup_{i=1}^k\bar{U}_i \subset \bigcup_{i=0}^kT^i(\bar{W_k})$ et $\forall i\in \{1,2,...,N-1\}, \bar{W_k}\cap T^i\bar{W_k}=\emptyset$. En appliquant \ref{tours} à $W_k$ et $U_{k+1}$, on trouve un ouvert $W_{k+1}$ tel que $\bar{W}_k \subset \bar{W}_{k+1}$, $\bar{U}_{k+1} \subset \bigcup_{i=1}^kT^i(\bar{W}_{k+1})$ et $\forall i\in \{1,2,...,N-1\}, \bar{W}_{k+1}\cap T^i\bar{W}_{k+1}=\emptyset$. On a alors $\bigcup_{i=1}^{k+1}\bar{U}_i \subset \bigcup_{i=0}^mT^i(\bar{W}_{k+1})$.
Finalement, en posant $W=W_s$, on obtient un n-marqueur W.
\end{proof}

\subsubsection{Application aux plongements}

\begin{Lemme}\label{approx1}
Soit $S\in \mathbb{N}$. Soit $\alpha$ un recouvrement ouvert fini de X tel que $ord(\beta)<Sd$.
Pour tout $U\in \alpha$, on se donne un point $q_U\in U$ et un N-uplet $v_U \in ([0,1]^d)^N$. Soit $\epsilon >0$.
Alors on peut trouver une fonction $F:X\to ([0,1]^d)^N$ telle que :
\begin{enumerate}
\item $\forall U\in \alpha ,\Arrowvert F(q_U)-v_U \Arrowvert _\infty <\epsilon$
\item $\forall x\in X, F(x) \in Conv(\{ F(q_U) | x\in U \}$
\item s'il existe $0\leq l <N-4S$, $\lambda \in ]0,1]$, et $x,y,x',y'  \in X$ tels que :
\[ \lambda F(x)|_l^{l+4S-1} + (1-\lambda)F(y)|_{l+1}^{l+4S}= \lambda F(x')|_l^{l+4S-1} + (1-\lambda)F(y')|_{l+1}^{l+4S} \]
alors il existe $U\in \beta$ tel que x et x' sont dans $U$.
\end{enumerate}
\end{Lemme}

\begin{proof}

On définit la fonction F ainsi :
\[ \forall x\in X, F(x) =\sum_{U\in \alpha}\rho_U(x)\vec{v_U} \]
où $\{ \rho_U\}_{U\in \alpha}$ est une partition de l'unité subordonnée à $\alpha$ telle que pour tout $U\in \alpha$, $\rho_U(q_U)=1$.
On définira $\{ \vec{v_U}\}_{U\in \alpha}$ par la suite. La propriété 2. sera alors immédiatement vérifiée.

Soit $x\in X$. On pose : $\alpha_x =\{ U\in \alpha \: | \: \rho_U(x)>0\}$. La propriété 3. s'écrit alors :

\[ \lambda \sum_{U\in \alpha_x}\rho_U(x)\vec{v_U}|_l^{l+4S-1} + (1-\lambda)\sum_{U\in \alpha_y}\rho_U(y)\vec{v_U}|_{l+1}^{l+4S}= \lambda \sum_{U\in \alpha_{x'}}\rho_U(x')\vec{v_U}|_l^{l+4S-1} + (1-\lambda)\sum_{U\in \alpha_{y'}}\rho_U(y')\vec{v_U}|_{l+1}^{l+4S} \]

Considérons la matrice M dont les colonnes sont les vecteurs de $\{ \vec{v_U}|_l^{l+4S-1}\}_{U\in \alpha_x \cup \alpha_{x'}} \cup \{ \vec{v_U}|_{l+1}^{l+4S}\}_{U\in \alpha_y\cup \alpha_{y'}}$. Alors comme $ord(\alpha)<Sd$, M a 4S lignes et k colonnes, où $k\leq 4Sd$. On peut alors appliquer \ref{matrice} de la section 3.6. à la matrice carrée obtenue à partir de M en ne gardant que les k premières lignes. On en déduit que pour presque tout choix de $\{ \vec{v_U}\}_{U\in \alpha}$ , les vecteurs de $\{ \vec{v_U}|_l^{l+4S-1}\}_{U\in \alpha_x \cup \alpha_{x'}} \cup \{ \vec{v_U}|_{l+1}^{l+4S}\}_{U\in \alpha_y\cup \alpha_{y'}}$ sont linéairement indépendants, donc la propriété 3. est vérifiée.

Comme précédemment, comme il existe un nombre fini de familles de la forme $\alpha_1^x$, on peut choisir la famille $\{ \vec{v_U}\}_{U\in \alpha_1^x}$ dans un ensemble de complémentaire négligeable, donc de façon à vérifier aussi la propriété 1.

\end{proof}

\begin{thm}
Supposons que (X,T) est une extension d'un système dynamique (Z,S) apériodique de dimension finie, et que $mdim(X,T)<\frac{d}{16}$. Alors (X,T) se plonge dans le décalage $(([0,1]^{d+1})^\mathbb{Z},\sigma)$.
\end{thm}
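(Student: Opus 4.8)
Le plan est d'appliquer la méthode de catégorie de Baire décrite en section~3.1 : il suffit de montrer que $D_\Omega$ est non vide, où $\Omega=(X\times X)\backslash\Delta$, en recouvrant $\Omega$ (qui est de Lindelöf) par une famille dénombrable de compacts $K$ tels que chaque $D_K$ soit dense dans $C(X,[0,1]^{d+1})$ ; l'ouverture de $D_K$ est automatique d'après le lemme correspondant. Toute la difficulté se concentre, comme dans les théorèmes précédents, dans la densité des $D_K$, que l'on établit pour un voisinage compact bien choisi $K=\bar U\times\bar V$ de chaque paire de points distincts.

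D'abord, j'exploiterais la structure d'extension pour disposer de marqueurs. Le système $(Z,S)$ étant apériodique et de dimension finie, il a la propriété de marqueur d'après \ref{marqapé}, donc la propriété topologique forte de Rokhlin d'après \ref{Rok} : pour tout $N$ il existe $g:Z\to\mathbb{R}$ continue dont $E_g$ est un $N$-marqueur. En notant $\pi:X\to Z$ le facteur (avec $\pi\circ T=S\circ\pi$), on a $T^i\pi^{-1}(E_g)=\pi^{-1}(S^iE_g)$, de sorte que $\pi^{-1}(E_g)$ hérite des propriétés de disjonction et de recouvrement et est un $N$-marqueur de $(X,T)$. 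On obtient ainsi, pour $N$ aussi grand que voulu, une décomposition de $X$ en tours de hauteur $\approx N$. L'idée directrice est de réserver la coordonnée supplémentaire (la $(d+1)$-ième du décalage cible) au codage d'une version continue de cette horloge : si deux points ont même image, leurs valeurs d'horloge coïncident le long de toute l'orbite, ce qui aligne leurs structures de tours à une ambiguïté bornée près, et ramène le problème à séparer des points lus dans une même fenêtre de niveaux.

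Ensuite, je construirais les $d$ coordonnées « de données » à l'aide du \ref{approx1} et de l'hypothèse $mdim(X,T)<\frac{d}{16}$. Cette hypothèse donne, pour tout recouvrement $\alpha$, $D(\alpha_0^{N-1})<\frac{Nd}{16}$ dès que $N$ est grand ; en raffinant $\alpha_0^{N-1}$ en un recouvrement $\beta$ d'ordre $D(\alpha_0^{N-1})<\frac{Nd}{16}$ (via le lemme qui suit la définition de $D$), on dispose d'un recouvrement de la base de la tour vérifiant $ord(\beta)<Sd$ pour tout $S\geq N/16$. Comme la fenêtre du \ref{approx1} est de largeur $4S$ et doit entrer dans la tour, on impose $4S\leq N$, soit $S\leq N/4$ ; les deux contraintes $N/16\leq S\leq N/4$ sont compatibles, et c'est précisément là qu'intervient le facteur $\frac{1}{16}$. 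On applique alors \ref{approx1} pour obtenir $F:X\to([0,1]^d)^N$, on définit la fonction candidate en lisant $F$ le long des niveaux ($f(T^kz)=F(z)|_k$ sur la base), puis on prolonge par le théorème de Tietze en contrôlant la norme uniforme.

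Enfin, je vérifierais que $f\in D_K$. Supposant $f(T^ix)=f(T^iy)$ pour tout $i$, la coordonnée d'horloge aligne les phases ; mais comme $(X,T)$ n'est qu'une extension d'un système apériodique, le décalage des horloges de $x$ et $y$ n'est pas nécessairement entier, de sorte qu'à la traversée d'un niveau les valeurs lues sont des combinaisons convexes de deux blocs consécutifs — c'est exactement la raison d'être du terme $\lambda F(\cdot)|_l^{l+4S-1}+(1-\lambda)F(\cdot)|_{l+1}^{l+4S}$ de la condition~3 du \ref{approx1}. L'égalité des images fournit une relation de cette forme, dont la condition~3 déduit que $x$ et $y$ sont dans un même petit ouvert, contredisant l'éloignement de $\bar U$ et $\bar V$. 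Le point le plus délicat sera d'ajuster la géométrie des tours issues des marqueurs de $Z$ et le codage de l'horloge pour que l'égalité des images se traduise \emph{exactement} sous la forme de la condition~3 en maîtrisant l'ambiguïté de phase sur une fenêtre de taille $\approx 4S$ ; c'est le lieu où se rejoignent l'analyse du \ref{approx1} et la combinatoire des marqueurs, et où l'hypothèse $mdim(X,T)<\frac{d}{16}$ doit être pleinement utilisée.
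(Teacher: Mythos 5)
Votre schéma réunit plusieurs des bons ingrédients du mémoire (propriété de marqueur du facteur via \ref{marqapé} et \ref{Rok}, horloge à partie fractionnaire expliquant la combinaison convexe de la condition 3 de \ref{approx1}, prolongement de Tietze, contrôle de $ord(\gamma)<Sd$ par l'hypothèse de moyenne dimension), mais il comporte un défaut structurel dans l'usage de la coordonnée supplémentaire. Vous proposez de mener l'argument de Baire dans $C(X,[0,1]^{d+1})$ en \emph{réservant} la $(d+1)$-ième coordonnée au codage d'une horloge continue : c'est incompatible avec la densité des $D_K$, puisqu'un ensemble de fonctions dont la dernière coordonnée est une fonction fixée ne peut pas approcher une fonction arbitraire de $C(X,[0,1]^{d+1})$. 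Pire, la résolution de l'horloge (le paramètre $N$, donc la fonction de Rokhlin $n$) dépend de l'échelle d'injectivité visée : aucune horloge fixe ne peut servir simultanément à toutes les échelles de l'intersection de Baire. Le mémoire contourne précisément cette difficulté : l'argument de Baire se fait dans $C(X,[0,1]^d)$, sur les ensembles $D_\epsilon$ des $g$ telles que $I_g\times\pi$ est $\epsilon$-injective --- c'est le facteur $\pi$ tout entier, fixe, qui porte l'information supplémentaire, l'horloge $n\circ\pi$ n'intervenant qu'à l'intérieur de chaque preuve de densité (c'est elle qui donne $n(\pi(T^a x))=n(\pi(T^a y))$ exactement, et non \emph{à une ambiguïté bornée près}). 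On obtient ainsi un plongement de $(X,T)$ dans $(([0,1]^d)^\mathbb{Z},\sigma)\times (Z,S)$, puis on conclut par le théorème de Jaworski appliqué à $(Z,S)$, apériodique et de dimension finie : la $(d+1)$-ième coordonnée finale est $h\circ\pi$, où $I_h$ plonge $(Z,S)$ dans $([0,1]^\mathbb{Z},\sigma)$. Cet appel à Jaworski, absent de votre plan, est indispensable pour transformer le produit en décalage sur $[0,1]^{d+1}$.

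Par ailleurs, votre comptage \emph{$4S\leq N$, soit $S\leq N/4$} est trop optimiste : il justifierait la constante $\frac{d}{4}$ au lieu de $\frac{d}{16}$. La vraie contrainte vient de la réduction de l'horloge modulo $M$ : pour garantir au plus une discontinuité de marqueur sur la plage examinée, il faut $2M-2<N$ (d'où le choix $M=\frac{N}{2}$), et \ref{n1} ne fournit une progression correcte de $\underline{n}$ et $\bar{n}$ que sur une fenêtre de longueur $\frac{M}{2}$ ; l'égalité des images ne se met donc sous la forme de la condition 3 de \ref{approx1} que pour $4S\leq \frac{M}{2}=\frac{N}{4}$, soit $S=\frac{N}{16}$, et c'est de là, via $ord(\gamma)<Sd$, que provient l'hypothèse $mdim(X,T)<\frac{d}{16}$. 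Vous signalez cet alignement des fenêtres comme \emph{le point le plus délicat} sans le résoudre : c'est exactement \ref{n1} qui le règle dans le mémoire, et sans lui votre esquisse ne produit pas la relation convexe requise.
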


\begin{proof}

Soit $\pi:(X,T) \to (Z,S)$ l'extension de (X,T) à (Z,S).
Nous allons montrer qu'il existe une fonction $g\in C(X,[0,1]^d)$ telle que $I_g\times \pi$ est un plongement de (X,T) dans $(([0,1]^d)^\mathbb{Z},\sigma)\times (Z,S)$. Alors, d'après le théorème de Jaworski, il existe d'autre part un plongement $I_h$ de (Z,S) dans $([0,1]^\mathbb{Z},\sigma)$, où $h\in C(Z,[0,1])$. Donc on obtiendra un plongement $I_f$ de $(X,T)$ dans $(([0,1]^{d+1})^\mathbb{Z}, \sigma)$, en posant $f=(g,h\circ \pi)$.

On pose :
\[ D_\epsilon = \{ f\in C(X,[0,1]^d) | I_f\times \pi \: est\: \epsilon-injective \} \]

Montrons que pour tout $\epsilon$, $D_\epsilon$ est dense dans $C(X,[0,1]^d)$. Soit $\tilde{f}\in C(X,[0,1]^d)$ et $\delta<0$. Posons :

\[ m_{dim}=
\left\{
\begin{array}{ll}
\frac{1}{32} &\text{si }mdim(X,T)=0 \\
mdim(X,T)&\text{sinon}
\end{array}
\right.
\]
On a $0<m_{dim}<\frac{d}{16}$.

Soit $\alpha$ un recouvrement ouvert fini de X tel que $ \max_{U\in \alpha}diam(f(U))<\frac{\delta}{2}$, et $\max_{U\in \alpha}diam(U)<\epsilon$. Soit $\epsilon '>0$ tel que $16m_{dim}(1+2\epsilon ')<d$. Alors on peut trouver $N\in \mathbb{N}$, divisible par 16, tel que $\frac{1}{N} D(\alpha_0^{N-1})<(1+\epsilon ')m_{dim}$. Soit $\gamma \succ \alpha_0^{N-1}$ un recouvrement ouvert fini de X tel que $ord(\gamma) =D(\alpha_0^{N-1})$. Posons $M=\frac{N}{2}$ et $S=\frac{N}{16}$. Alors : $ord(\gamma)<Sd$.

On peut appliquer \ref{approx1} à $\gamma$ et S. On choisit, pour tout $U\in \gamma$, un point $q_U \in U$ et on définit le N-uplet $v_U=(\tilde{f}(T^iq_U))_{i=0}^{N-1}$ On obtient une fonction $F:X\to ([0,1]^d)^N$.

D'autre part, d'après \ref{marqapé}, (Z,S) a la propriété de marqueur. Donc d'après \ref{Rok}, (Z,S) a la propriété de Rokhlin topologique forte. Donc il existe une fonction continue $n:X\to \mathbb{R}$ telle qu'en définissant :
\[ E_n=\{ x\in S \: | \: n(T(x))\neq n(x)+1\} \]
on a :
\[ \forall i\in \{ 1,2,...,N-1\}, E_n\cap T^i(E_n)=\emptyset \]

Posons alors, pour tout x dans X : $\bar{n}(x)=\lceil n(\pi (x)) \rceil$ mod M,  $\underline{n}(x)=\lfloor n(\pi (x)) \rfloor$ mod M et $n'(x)=\{ n(x)\}$. On définit, pour tout x dans X :
\[f(x)= (1-n'(x))F(T^{-\underline{n}(x)}x)|_{\underline{n}(x)}+n'(x)F(T^{-\bar{n}(x)}x)|_{\bar{n}(x)}\]
On obtient une fonction $f:X\to [0,1]^d$.

Montrons que f est continue. Soit $x\in X$. Si $n(x) \notin \mathbb{Z}$, alors il existe un voisinage de x sur lequel $\bar{n}$ et $\underline{n}$ sont constantes, et $n'$ continue. Donc f est continue en x. Si en revanche $x\in \mathbb{Z}$, fixons $\epsilon >0$. Alors pour $x'$ suffisamment proche de x, on obtient l'un ou l'autre des cas :
\begin{enumerate}
\item $n(x)\leq n(x')<n(x)+\frac{\epsilon}{2}$ (alors $\underline {n}(x')=n(x)$ et $n'(x')<\frac{\epsilon}{2}$)
\item $n(x)-\frac{\epsilon}{2}<n(x')<n(x)$ (alors $\bar{n}(x')=\bar{n}(x)=n(x)$ et $n'(x')>1-\frac{\epsilon}{2}$)
\end{enumerate}

Dans le premier cas :
\[ \Arrowvert f(x)-f(x')  \Arrowvert   \leq \Arrowvert F(T^{-n(x)}(x'))|_{n(x)}- F(T^{-n(x)}(x))|_{n(x)} \Arrowvert + \Arrowvert n'(x')(F(T^{-\bar{n}(x')}(x'))|_{n(x)+1}-F(T^{-\bar{n}(x)}(x))|_{n(x)})  \Arrowvert  \]
On a :
\[  \Arrowvert (F(T^{-\bar{n}(x')}(x'))|_{n(x)+1}-F(T^{-\bar{n}(x)}(x))|_{n(x)})  \Arrowvert \leq 2\]
Donc pour x' suffisamment proche de x, on obtient :
\[ \Arrowvert f(x)-f(x')  \Arrowvert   \leq \epsilon (1+d) \]
On obtient la même chose dans le second cas. On en déduit que f est continue.\\

Montrons que  $\Arrowvert \tilde{f}-f \Arrowvert_\infty < \epsilon$. Soit $x\in X$. Alors :
\[ f(x)\in Conv(\{ F(q_U)|_{\underline{n}(x)} \: |\:  T^{-\underline{n}(x)}x\in U\} \cup \{ F(q_U)|_{\bar{n}(x)} \: |\: T^{-\bar{n}(x)}x\in U\}) \]
Or, pour tous $0\leq n <N$ et $U\in \beta$ tels que $T^{-n}(x)\in U$, comme $F(q_U)=v_U $, on a :
\[ \Arrowvert F(q_U)|_n-\tilde{f}(x)\Arrowvert = \Arrowvert v_U|_n-\tilde{f}(x)\Arrowvert = \Arrowvert \tilde{f}(T^nq_U)-\tilde{f}(x)\Arrowvert \]
Comme x et $T^n(q_U)$ sont tous les deux dans $T^n(U)$, on a :
\[ \Arrowvert F(q_U)|_n-\tilde{f}(x)\Arrowvert \leq diam (\tilde{f}(T^n(U))) \]
Comme $\beta \succ \alpha_0^{M-1}$ et $n<N$, on peut trouver $V\in \alpha$ tel que $U \subset T^{-n}(V)$. Donc :
\[ \Arrowvert F(q_U)|_n-\tilde{f}(x)\Arrowvert \leq \max_{V\in \alpha} diam (\tilde{f}(V))<\epsilon \]
Finalement, $\Arrowvert f(x)-\tilde{f}(x) \Arrowvert <\epsilon$.\\

Enfin, montrons que $f\in D_\epsilon$. Soient x et y dans X, tels que $(I_f\times \pi)(x)=(I_f\times \pi)(y)$. Alors pour tout $a\in \mathbb{Z}$, $f(T^ax)=f(T^ay)$. De plus, comme $\pi \circ T=S\circ \pi$ et $\pi (x)=\pi (y)$, on a pour tout $a\in \mathbb{Z}$, $\pi (T^ax)=\pi(T^ay)$. Donc :
\[\forall a\in \mathbb{Z},\: n(\pi(T^ax))=n(\pi(T^ay)) \]

Comme $\frac{M}{2}-2+\frac{3M}{2}=2M-2<N$, il existe au plus un indice $j\in \{ -\frac{3}{2}M,..., \frac{M}{2}-2$ tel que $n(T^{j+1}(x))\neq n(T^j(x))+1$. Alors d'après \ref{n1}, on peut trouver un indice $r \in \{ \frac{3M}{2},..., 0\}$. tel que $\underline{n}(T^rx) \leq \frac{M}{2}$ et pour tout $s\in \{ r,..., r+\frac{M}{2}-1\}$, $\underline{n}(T^sx)=\underline{n}(T^rx)+s-r$ et $\bar{n}(T^sx)=\bar{n}(T^rx)+s-r$.

Alors, en posant $\lambda=n'(T^rx)$ et $a=\underline{n}(T^rx)$, l'égalité :
\[ \forall s\in \{ r,..., r+\frac{M}{2}-1\}, f(T^sx)=f(T^sy)  \]
 s'écrit :
\[  \lambda F(T^{r-a}x)|_a^{a+4S-1} + (1-\lambda)F(T^{r-a-1}x)|_{a+1}^{a+4S}= \lambda F(T^{r-a}y)|_a^{a+4S-1} + (1-\lambda)F(T^{r-a-1}y)|_{a+1}^{a+4S} \]

Il existe donc un ouvert $U\in \gamma$ tel que $T^{r-a}x$ et $T^{r-a}y$ sont dans U. Comme $\gamma \succ \alpha$ et $-N\leq r-a\leq 0$, il existe $V\in \alpha$ tel que $T^{r-a}x$ et $T^{r-a}y$ sont dans $T^{r-a}(V)$ et $T^{r-a}(V)$, d'où x et y sont dans V. On a supposé que $\max_{U\in \alpha}diam(U)<\epsilon$, donc $d(x,y)<\epsilon$.

Comme ceci est vrai pour tout couple (x,y), on conclut $f\in D_\epsilon$.

\end{proof}

\subsection{Quelques lemmes techniques}

On rassemble ici quelques lemmes utilisés dans les sections précédentes pour établir les résultats de densité.

\begin{Lemme}\label{indep}
Soit $V=Vect(v_1,v_2,...,v_r)$ un sous-espace vectoriel de $\mathbb{R}^m$ de dimension r>0. Soit $s\in \mathbb{N}$.Alors :
\begin{enumerate}
\item Si $r+s\leq m$ alors pour presque tout $(v_{r+1},v_{r+2},...,v_{r+s})\in ([0,1]^m)^s$ (relativement à la mesure de Lebesgue sur $([0,1]^m)^s$), $Vect(v_1,v_2,...,v_{r+s})$ est de dimension r+s.
\item Si $r+s\leq m+1$, pour presque tout $(v_{r+1},v_{r+2},...,v_{r+s})\in ([0,1]^m)^s$, $Vect(v_2-v_1,v_3-v_1,...,v_{r+s}-v_1)$ est de dimension r+s-1.
\end{enumerate}
\end{Lemme}

\begin{proof}
Supposons $r+s\leq m$. On procède par récurrence sur s. Pour s=0, il n'y a rien à démontrer. Soit $i\in \mathbb{N}$ tel que $r+i< m$ et pour presque tout $(v_{r+1},v_{r+2},...,v_{r+i})\in ([0,1]^m)^i$ , $Vect(v_1,v_2,...,v_{r+i})$ (qu'on note $V_i$), est de dimension r+i. Alors, pour tout vecteur $v_{r+i+1}$ qui n'appartient pas à $V_i\cap [0,1]^m$, $Vect(v_1,v_2,...,v_{r+i},v_{r+i+1})$ est de dimension r+i+1. Si on note $\lambda$ la mesure de Lebesgue sur $[0,1]^m$, on a $\lambda(V_i\cap [0,1]^m)=0$ car $r+i<m$. Donc pour presque tout $(v_{r+1},v_{r+2},...,v_{r+i+1})\in ([0,1]^m)^{i+1}$ , $Vect(v_1,v_2,...,v_{r+i+1})$ est de dimension r+i+1.

Finalement, pour presque tout $(v_{r+1},v_{r+2},...,v_{r+s})\in ([0,1]^m)^s$, $Vect(v_1,v_2,...,v_{r+s})$ est de dimension r+s. De plus, on en déduit que $Vect(v_2-v_1,v_3-v_1,...,v_{r+s}-v_1)$ est de dimension r+s, ce qui règle une partie du cas 2.

En utilisant ce résultat pour r+s=m, on obtient que pour presque tout $(v_{r+1},v_{r+2},...,v_{m})\in ([0,1]^m)^{m-r}$, $Vect(v_1,v_2,...,v_{m})$ est de dimension m, d'où $Vect(v_2-v_1,v_3-v_1,...,v_{m}-v_1)$ (qu'on note $\tilde{V}$) est de dimension $m-1$. On en déduit que $\lambda(\tilde{V}\cap [0,1]^m)=0$. Donc de même, pour presque tout vecteur $v_{m+1}$ dans $[0,1]^m$, $Vect(v_2-v_1,v_3-v_1,...,(v_{m+1}+v_1)-v_1)$ est de dimension m, ce qui permet de conclure.
\end{proof}

\begin{Lemme}\label{matrice}
Soit $r\in \mathbb{N}$ et M une matrice carrée de taille n à coefficients dans $\{1,2,...,r\}$, telle qu'il n'y a pas de coefficients égaux sur une même ligne ou une même colonne. Alors pour presque tous $t_1,t_2,...,t_r$ dans $\mathbb{R}$, la matrice $A(t_1,t_2,...,t_r)$ définie par :
\[ A(t_1,t_2,...,t_r)_{i,j}=t_{M_{i,j}} \]
est inversible.
\end{Lemme}

\begin{proof}
On va montrer que le polynôme à r variables $det(A(t_1,t_2,...,t_r))$ est non nul.

Supposons que 1 apparaît s fois dans M (on peut supposer sans perte de généralité que s est non nul). Alors en écrivant :

\[ det(A)= \sum_{\sigma \in \Sigma_n}\epsilon(\sigma)a_{1\sigma(1)}a_{2\sigma(2)}...a_{n\sigma(n)}  \]

on remarque que $det(A(t_1,t_2,...,t_r))$ est un polynôme en la variable $t_1$, dont le coefficient dominant est lui-même un polynôme en les variables $t_2,...,t_r$. Plus précisément, si s=n, ce coefficient dominant est égal à 1, sinon c'est le déterminant de la matrice extraite de $A(t_1,t_2,...,t_r)$ en supprimant les lignes et les colonnes où $t_1$ apparait. Comme cette matrice extraite est de taille <n, on en conclut qu'on pourra procéder par récurrence : si son coefficient est un polynôme non nul, alors le polynôme $det(A(t_1,t_2,...,t_r))$ sera non nul.
\end{proof}

\begin{Lemme}\label{matrice2}
Soit $r\in \mathbb{N}$, $k\in \mathbb{N}$. Soit M une matrice carrée de taille $(2k-1)\times 2k$ à coefficients dans $\{1,2,...,r\}$, telle qu'il n'y a pas de coefficients égaux sur une même ligne ou une même colonne, et telle que chaque coefficient apparait au plus deux fois dans M. Alors pour presque tous $t_1,t_2,...,t_r$ dans $\mathbb{R}$, les colonnes de la matrice $A(t_1,t_2,...,t_r)$ définie par :
\[ A(t_1,t_2,...,t_r)_{i,j}=t_{M_{i,j}} \]
sont des vecteurs affinement indépendants.
\end{Lemme}

\begin{proof}
Procédons par récurrence sur k. Supposons qu'il existe un entier k tel que le résultat est vrai pour tout matrice de taille $(2k-3)\times (2k-2)$, et montrons le résultat pour tout matrice M de taille $(2k-1)\times 2k$.  Si chaque coefficient de M n'apparait qu'une fois, alors \ref{indep} (cas 2) permet de conclure. Sinon, il existe un entier i qui apparait deux fois. Soit j tel que i n'est pas sur la j-ième colonne. Alors soit N la matrice obtenue à partir de M en retranchant la j-ème colonne à toutes les autres, et en la supprimant. N est carrée de taille $2k\times 2k$. Pour conclure, il suffit de montrer que $det(A_N(t_1,t_2,...,t_r))\neq 0$ pour presque tout choix de $t_1,t_2,...,t_r$.

Comme dans le lemme précédent, on considère $det(det(A_N(t_1,t_2,...,t_r))$ comme un polynôme en la variable $t_i$. Alors le coefficient en $t_i^2$ est le déterminant de la matrice $A_{\tilde{N}}(t_1,t_2,...,t_r)$, où $\tilde{N}$ est la matrice obtenue à partir de N en supprimant les lignes et les colonnes contenant i.

Soit $\tilde{M}$ la matrice obtenue à partir de M en supprimant les lignes et les colonnes contenant i. Alors M est une matrice de taille $(2k-3)\times (2k-2)$ qui vérifie les propriétés de l'énoncé. Par hypothèse de récurrence, pour presque tous $t_1,.., t_{i-1}, t_{i+1},..,t_r$ dans $\mathbb{R}$, les colonnes de $A_{\tilde{M}}(t_1,t_2,...,t_r)$ sont des vecteurs affinement indépendants.

De plus, on remarque que $\tilde{N}$ est la matrice obtenue à partir de $\tilde{M}$ en en retranchant la j-ème colonne à toutes les autres, et en la supprimant. On en déduit que $det(A_{\tilde{N}}(t_1,t_2,...,t_r))\neq 0$ pour presque tout choix de $t_1,t_2,...,t_r$. Donc le polynôme en la variable $t_i$, $det(det(A_N(t_1,t_2,...,t_r))$ est non nul. Finalement, $det(A_N(t_1,t_2,...,t_r))\neq 0$ pour presque tout choix de $t_1,t_2,...,t_r$.
\end{proof}

\begin{Lemme}\label{n1}
Soit une fonction $n:X\to \mathbb{R}$, et $M\in \mathbb{N}$ un entier pair. Supposons qu'il existe au plus un indice $j\in \{ -\frac{3}{2}M,..., \frac{M}{2}-2\}$ tel que $n(T^{j+1}(x))\neq n(T^j(x))+1$. Alors on peut trouver un indice $r\in \{ \frac{3}{2}M,...,0\} $ tel que :
\begin{enumerate}
\item $\lfloor n(T^rx) \rfloor$ mod M $\leq \frac{M}{2}$
\item $\forall s\in \{ r,..., r+\frac{M}{2}-1\},  \lfloor n(T^sx) \rfloor$ mod M $=\lfloor n(T^rx) \rfloor$ mod M+s-r
\item$\forall s\in \{ r,..., r+\frac{M}{2}-1\},  \lceil n(T^sx) \rceil$ mod M $=\lceil n(T^rx) \rceil$ mod M+s-r
\end{enumerate}
\end{Lemme}

\begin{proof}
L'ensemble $\{ -\frac{3}{2}M,..., \frac{M}{2}-2\}$ étant de longueur M+(M-1), l'un des ensembles : $\{ -\frac{3}{2}M,..., j\} , \{ -\frac{3}{2}M,..., \frac{M}{2}-2\}$ est de longueur au moins égale à M. On note $\{ a,...,b\}$ cet ensemble : on a donc $ b-a \geq M-1$. Par définition de l'indice j, $\{ \lfloor n(T^ax) \rfloor,\lfloor n(T^{a+1}x) \rfloor,...,\lfloor n(T^bx) \rfloor\}$ est une famille d'au moins M entiers distincts. On peut donc trouver $a\leq k\leq b$ tel que $\lfloor n(T^kx) \rfloor$ mod M $ =0$.

Supposons que $b-k+1 \geq \frac{M}{2}$. Alors posons $r=k$. On a $\lfloor n(T^rx) \rfloor$ mod M $=0$. Soit $s\in \{ r,..., r+\frac{M}{2}-1\}$. Alors comme $a\leq s\leq b$, $n(T^sx)=n(T^rx) +s_r$, donc $\lfloor n(T^sx) \rfloor = \lfloor n(T^rx) \rfloor +s-r$. De plus, $0\leq r-s < \frac{M}{2}$, donc $\lfloor n(T^sx) \rfloor$ mod M $ = \lfloor n(T^rx) \rfloor$ mod M $ +s-r$. De même, $\lceil n(T^sx) \rceil$ mod M $ = \lceil n(T^rx) \rceil$ mod M $ +s-r$.

Si au contraire $b-k+1<\frac{M}{2}$, alors on pose $r=k-\frac{M}{2}-1$. Alors a $0\geq r\geq a$. Comme $a\leq r\leq b$, on a $\lfloor n(T^rx) \rfloor = \lfloor n(T^kx) \rfloor +r-k = -\frac{M}{2}-1$. Donc on a bien $\lfloor n(T^rx) \rfloor$ mod M $\leq \frac{M}{2}$. Soit $s\in \{ r,..., r+\frac{M}{2}-1\}$. On a encore $a\leq s\leq b$, donc on peut conclure de la même façon.
\end{proof}

\bibliographystyle{alpha}
\bibliography{biblio}

\begin{thebibliography}{Jaw74}

\bibitem[CK05]{K}
Michel Coornaert and Fabrice Krieger.
\newblock Mean topological dimension for actions of discrete amenable groups.
\newblock {\em Discrete and continuous dynamical systems}, 13(3):779--793,
  August 2005.

\bibitem[Coo05]{Coo}
Michel Coornaert.
\newblock {\em Dimension topologique et systèmes dynamiques}, volume~14.
\newblock Société Mathématique de France, 2005.

\bibitem[Gut14]{YG}
Yonatan Gutman.
\newblock Mean dimension and jaworsky-type theorems.
\newblock Preprint. arXiv:1208.5248v5, 2014.

\bibitem[Jaw74]{Jaw}
Allan Jaworski.
\newblock {\em The Kakutani-Bebutov thorem for groups}.
\newblock PhD thesis, University of Maryland, 1974.

\bibitem[Lin99]{LIN}
Elon Lindenstrauss.
\newblock Mean dimension, small entropy factors and embedding theorem.
\newblock {\em Publications mathématiques de l'I.H.E.S.}, 89:227--262, 1999.

\bibitem[LW00]{LW}
Elon Lindenstrauss and Benjamin Weiss.
\newblock Mean topological dimension.
\newblock {\em Israel Journal of Mathematics}, (115):1--24, 2000.

\end{thebibliography}
\nocite{*}

\end{document}